%% file: main.tex
\documentclass[a4paper]{article}
\usepackage[utf8]{inputenc}
\usepackage{amsmath,amsthm,amssymb,amsfonts}
\usepackage[english]{babel}
\usepackage{pdfpages}
\usepackage{adjustbox}
\usepackage{tikz}
\usepackage{xcolor}

\theoremstyle{plain}
\newtheorem{theorem}{Theorem}
\newtheorem{lemma}[theorem]{Lemma}
\newtheorem{claim}[theorem]{Claim}
\newtheorem{defin}[theorem]{Definition}

\newtheorem*{remark}{Remark}

\usepackage{hyperref}
\usepackage{refcount}

\newcommand{\refoot}[1]{\kern0.12em\hyperref[#1]{\textsuperscript{\normalfont\footnotesize\ref*{#1}}}}
\newcounter{reusefn}

\title {
    Spanning triangulations in random graphs
}
\author {
    S. Vakhrushev\footnote{Department of Mathematics, Uppsala University, Uppsala, Sweden, and Moscow Institute of Physics and Technology, Dolgoprudny, Russia}, M. Zhukovskii\footnote{School of Computer Science, The University of Sheffield, Sheffield S1 4DP, United Kingdom}
}

\date{}

\usepackage[]{hyperref}

\definecolor{schemecolor}{HTML}{A58F9F}

\usepackage{subfigure}

\newcounter{image}
\newenvironment{image}[1][]{\refstepcounter{image} 
	\small Fig.~\theimage. #1 \rmfamily}{}

\usepackage{pgfplots}

\usepackage{hyphenat}
\begin{document}

\maketitle

\begin{abstract}
In 1991 Bollob\'{a}s and Frieze found the threshold for the emergence of a spanning triangulation of a triangle in the binomial random graph, up to a logarithmic factor. In this paper, we find the threshold probability for the emergence of a spanning triangulation of a $k$-gon for any $3\leq k\leq n$, up to a constant factor. 
\end{abstract}

\vspace{\baselineskip}

\input{introduction}

\input{tech_part}

\input{lower_bound}
\input{upper_bound}
\input{the_whole_border}
\input{conclusion}
\input{references}
\input{appendix}
\end{document}

%% file: introduction.tex
\section{Introduction}
 Threshold functions play a central role in the analysis of random graphs. Recall that in the {\it binomial random graph} $\mathbf{G}\sim G(n,p)$ edges between any pair of vertices from the $n$-vertex set $[n]:=\{1,\ldots,n\}$ appear independently with probability $p$. Formally, for a non-trivial monotone\footnote{A graph property is called increasing, if it is upward closed --- that is, addition of edges preserves the property. Downward closed properties are called decreasing. A property is monotone if it is either increasing or decreasing.} property $\mathcal{A}$, the {\it threshold} $p_c(\mathcal{A})$ is the unique $p$ for which $\mathbb{P}(\mathbf{G}\in \mathcal{A})= 1/2$. Significant progress has been made in determining thresholds for the presence of copies of a specific graph $F$ (see~\cite[Ch. 4, 7, 8]{bollobas}, \cite[Ch. 5-6]{freeze}, \cite[Ch. 3-4]{janson}). However in the case of arbitrary \textit{spanning} graphs $F$, the known upper and lower bounds on the order of magnitude of the threshold do not match. The exact asymptotics of the threshold is known in some specific cases --- e.g., for Hamilton cycles~\cite{ham_cycles2, ham_cycles}, spanning trees~\cite{sp_trees}, factors~\cite{factors} and, in particular, perfect matchings~\cite{matchs}; see also the survey~\cite{survey}. 
 
 As follows from the theorem of Park and Pham~\cite{park}, for any sequence of graphs $F=F(n)$ on $[n]$, the probability threshold $p_c(F):=p_c(\mathcal{A}_F)$ for the property $\mathcal{A}_F$ to contain an isomorphic copy of $F$, differs from the so called expectation threshold by at most a logarithmic factor. Expectation threshold is, roughly speaking, the best lower bound that can be achieved using the union bound. Although the expectation threshold is hard to compute for an arbitrary property, there has been some recent progress~\cite{DKP} on the graph version of the Kahn--Kalai conjecture that states that the expectation threshold can be replaced with the best possible lower bound that follows from applying Markov's inequality to the number of subgraphs isomorphic to a subgraph of $F$. The result of Dubroff, Kahn, and Park~\cite{DKP} states that the upper bound is within the $(\log n)^3$-factor from this ``graph expectation threshold''. Even though these results provide a fairly tight interval of possible values for the probability threshold, for specific graphs $F$, finding the {\it order of magnitude} of $p_c(F)$ is a challenging task.

 When a family of graphs $\mathcal{G}$ that generates an isomorphism-closed monotone property is significantly small, so that the size of an isomorphism class of a single repsentative in $\mathcal{G}$ is comparable with the size of the entire family, it is reasonable to expect that the probability threshold for the entire family is close to the probability threshold for such a representative. The class of properties considered in this paper is of such kind. As a warm up, consider the property of containing a square of a Hamilton path $F$. Asymptotics of $p_c(F)$ is known~\cite{zhuk} and it coincides with asymptotics of the expectation threshold. Note that $F$ can be embedded on the plane so that all its internal faces are triangles, and the external face has length $n$, see Fig.~\ref{comb_triang}. Let $\mathcal{G}$ be the set of triangulations of an $n$-cycle on $[n]$. Since the total number of squares of Hamilton paths is $\frac{n!}{2}$, which is ``close'' to $|\mathcal{G}|=n!\exp(O(n))$, the expectation threshold for $\mathcal{G}$ is only constant-factor away from $p_c(F)$. This immediately implies the same order of magnitude for the probability threshold for the entire $\mathcal{G}$.

\renewcommand{\figurename}{Fig.}
\begin{figure}[htbp]
    \centering
    \begin{minipage}{0.8\textwidth}
        \centering
        \includegraphics[width=\linewidth]{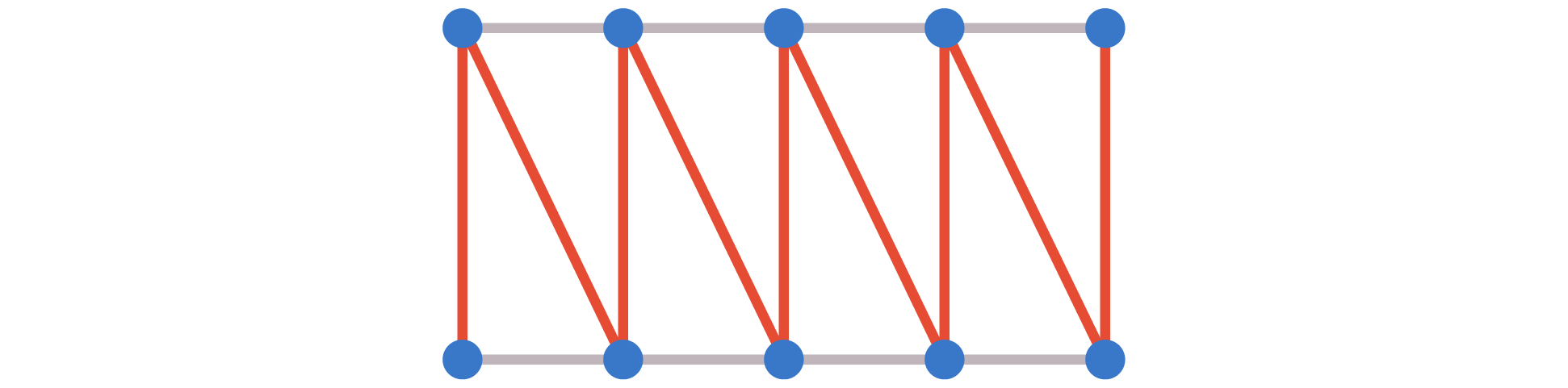} 
    \end{minipage}
    \caption{The second power of Hamilton path, the path is in red.}
    \label{comb_triang}
\end{figure}

 The purpose of this paper is to determine the order of magnitude of the probability threshold for the presence of a spanning triangulation of a $k$-gon, for any $k=k(n)\in\{3,\ldots,n\}$.
  We denote by $\mathcal{T}_{n,k}$ the set of all traingulations of a $k$-cycle on the set of vertices $[n]$, and we call each element $T\in\mathcal{T}_{n,k}$ an \textit{$\mathit (n, k)$-triangulation}. For each $T\in T_{n, k}$, we also denote by $S(T)$ the set of {\it internal} vertices of $T$, and by 
  \begin{equation}
   s=s(n):=n-k
   \label{eq:s-def}
  \end{equation} 
  --- the cardinality of this set. Note that Euler's formula for planar graphs implies that each $(n,k)$-triangulation contains exactly 
  $$
  m=m(n, k) :=3n-3-k
  $$ 
  edges. Now we are ready to formulate the main result of this paper:

\begin{theorem}
\label{th:main}
    Let $k=k(n)$ be a sequence such that $3\leq k \leq n$. Set 
    \begin{equation}
     \alpha = \alpha(n) := \frac{k}{n}.
     \label{eq:alpha-def}
  \end{equation} 
    Then the threshold probability for the appearance of at least one $(n,k)$-triangulation in $\mathbf{G}\sim G(n, p)$ equals $$p_c(\mathcal{T}_{n, k}) =\Theta\left(n^{-\frac{1}{3-\alpha}}\right)=\Theta\left(n^{-\frac{1}{2+s /n}}\right).$$
\end{theorem}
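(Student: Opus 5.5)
We prove the lower and upper bounds separately.

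\textbf{Lower bound.} We use the first moment method applied to the whole family $\mathcal{T}_{n,k}$. We first estimate $|\mathcal{T}_{n,k}|$. A triangulation of a $k$-gon with $s$ internal vertices is a rooted planar map. By Tutte/Brown-type enumeration, the number of unlabelled rooted triangulations of a $k$-gon with $s$ internal vertices is at most $C^{n}$ for an absolute constant $C$. Labelling the vertices by $[n]$ gives at most $n!\,C^n$ objects, so
\[
|\mathcal{T}_{n,k}|\le n!\,C^{n}.
\]
The expected number of $(n,k)$-triangulations in $\mathbf{G}$ is therefore at most $n!\,C^n p^{3n-3-k}$. Since $3n-3-k=(3-\alpha)n-3$, taking $p=c\,n^{-1/(3-\alpha)}$ gives the bound
\[
(n/e)^n C^n c^{(3-\alpha)n-3}\, n^{-n+3/(3-\alpha)}=O(n^{3/2})\,(Cc^{3-\alpha}/e)^n\, c^{-3}.
\]
Because $3-\alpha\ge 2$, this tends to $0$ for $c$ small. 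Markov's inequality then gives $p_c\ge c\,n^{-1/(3-\alpha)}$. One subtlety is that the exponent $3/(3-\alpha)\le 3/2$ contributes only a polynomial factor, so the exponential decay dominates.

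\textbf{Upper bound.} We fix one convenient representative $F\in\mathcal{T}_{n,k}$ and show that $p_c(F)=O(n^{-1/(3-\alpha)})$. A natural choice is a "thickened square of a path": take the $k$-cycle as the boundary, and arrange the $s$ internal vertices as a path whose square-type structure fans against the boundary. Concretely, we build $F$ from bounded-size gadgets glued along edges, with each gadget a triangulated strip containing a proportion $\alpha$ of boundary vertices, so that $F$ has bounded degree and a linear "path-like" structure.

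There are two routes for this step.
\begin{itemize}
\item[(a)] \emph{Spread measures (Park--Pham / Frankston--Kahn--Narayanan--Park).} This approach loses a logarithm, so it is not sufficient on its own.
\item[(b)] \emph{Second moment method for the number $X$ of copies of $F$,} in the style of the square-of-Hamilton-path result~\cite{zhuk}, possibly combined with Riordan's spanning-subgraph theorem. Riordan's theorem gives the threshold $p$ whenever $n p^{\gamma}\Delta^{-4}\to\infty$, where $\gamma=\max_{H\subseteq F, v_H\ge3} e_H/(v_H-2)$.
\end{itemize}
For our $F$, with $e_F/(v_F-2)\approx 3-\alpha$, we need $\gamma\le 3-\alpha+o(1)$. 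Riordan's condition then requires $np^{3-\alpha}\to\infty$, which loses only a $\omega(1)$ factor rather than a constant.

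To obtain a constant factor we use the following scheme.
\begin{enumerate}
\item Use route (b): compute $\mathbb{E}X^2/(\mathbb{E}X)^2$ by summing over overlap subgraphs $H$.
\item Show that this ratio is bounded by a constant when $p=Cn^{-1/(3-\alpha)}$.
\item Obtain $\mathbb{P}(X>0)\ge$ const by Paley--Zygmund.
\item Amplify to probability $1/2$, either by the Friedgut-type sharpness argument or by splitting $\mathbf{G}$ into a constant number of independent rounds. Since $p_c$ is defined at probability $1/2$, a constant lower bound on the probability suffices after a constant multiplicative increase of $p$ (via multiple exposure: the union of $t$ independent copies succeeds with probability at least $1-(1-\delta)^t$ if we choose the events appropriately).
\end{enumerate}

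\textbf{Main obstacle.} The main difficulty is the overlap estimate in step~1. We need every subgraph $H\subseteq F$ to have $e_H\le(3-\alpha)(v_H-1)+O(\text{number of components})$, uniformly in $\alpha$. We also need the count of overlap configurations weighted by $p^{-e_H}$ to sum to $O(1)$. This forces the internal vertices to be spread evenly along the boundary, so that no segment is denser than average. The uneven regimes $k$ small and $s$ small must be handled with care; in particular, for $k$ bounded we are close to a planar triangulation, with density $3$.
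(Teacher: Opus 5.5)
Your lower bound is correct and is essentially the paper's argument. The paper bounds $|\mathcal{T}_{n,k}|\le n!\,2^{4m}$ via Tur\'an's planar encoding rather than rooted-map enumeration. Your polynomial prefactor is really $n^{1/2+3/(3-\alpha)}\le n^2$ rather than $n^{3/2}$, which is harmless. The multiple-exposure amplification at the end is also fine.

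The upper bound has a genuine gap: step~2 of your scheme is false. For every $F\in\mathcal{T}_{n,k}$ and $p=Cn^{-1/(3-\alpha)}$, the ratio $\mathbb{E}X^2/(\mathbb{E}X)^2$ is unbounded unless $\alpha=O(1/\log n)$.
\begin{itemize}
\item In $G(n,p)$ this already fails at the level of single edges. Two uniformly random copies share roughly $\mathrm{Po}(\lambda)$ edges with $\lambda=m^2/\binom{n}{2}=\Theta(1)$, which produces a factor $\exp(\Theta(1/p))$.
\item Working in $G(n,M)$, as Riordan does, cancels that factor but not the triangles. Every $F\in\mathcal{T}_{n,k}$ has $2n-k-2=\Theta(n)$ triangular faces. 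So two independent copies share in expectation $\Theta(n)^2/\binom{n}{3}=\Theta(1/n)$ triangles, each carrying a factor $p^{-3}$.
\item Overlaps containing a triangle therefore contribute at least of order $n^{-1}p^{-3}=C^{-3}n^{\alpha/(3-\alpha)}\to\infty$. In other words, $X$ is not concentrated: it is driven by the fluctuations of the triangle count, and Paley--Zygmund gives nothing.
\end{itemize}
The same observation rules out your use of Riordan's theorem. Any graph containing a triangle has $\gamma_F\ge 3$, so $\gamma=3$ for every triangulation. Riordan's theorem then yields only $O(n^{-1/3})$, as the paper notes, which matches $n^{-1/(3-\alpha)}$ only when $\alpha=O(1/\log n)$; otherwise you lose a polynomial factor, not an $\omega(1)$ one.

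Structurally, the second moment charges each connected overlap $H$ about $n^{v_H-2}p^{e_H}$, which is where $e_H/(v_H-2)$ comes from. The condition you call the main obstacle, $e_H\le(3-\alpha)(v_H-1)+O(c_H)$, is a spread-type condition that charges $n^{v_H-1}p^{e_H}$ per component. Good triangulations satisfy the latter, but it does not control the variance.

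The missing idea is that the route you discarded in (a) does give a constant factor once the fragmentation method is used in its refined forms. The paper applies the superspread theorem of~\cite{f_cycles} (Theorem~\ref{main_th}) and Spiro's multi-level spread theorem (Theorem~\ref{spiro-theorem}); neither loses a logarithm. Even then, a single uniform gadget construction does not suffice, and the choice of $F$ must depend on the regime:
\begin{itemize}
\item For $s=\Omega(n)$, the paper uses nested cycles and an isoperimetric inequality to get $v(I)\ge |I|/(3-\alpha)+1+\varepsilon$ for small connected $I$.
\item For $s\le n^{\varepsilon}$, it uses a comb with far-apart $K_4$'s. These violate that margin and are handled separately, using the fact that their centres have only $s$ possible images.
\item For $n^{1-o(1)}\le s=o(n)$, it must use wheels of unbounded length together with Spiro's theorem. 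In this range every bounded-degree $F$ contains $s$ small wheels (the links of internal vertices), and for these the superspread factor $|E(F)|^{-\beta c_I}$ with constant $\beta$ cannot hold.
\end{itemize}
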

Therefore, the threshold is smoothly changing from $n^{-\frac{1}{2}}$ to $n^{-\frac{1}{3}}$ while $k$ decreases from $n$ to $3$.

\paragraph{Related work.} Bollob\'{a}s and Frieze~\cite{triangle-case} found $p_c(\mathcal{T}_{n, k})$ in the case of $k=3$ up to a $\Theta((\log n)^{1/3})$-factor: $\left(\frac{27e}{256}\right)^{\frac{1}{3}}n^{-\frac{1}{3}}\leq p_c(\mathcal{T}_{n,3})\leq 100(\log n)^{\frac{1}{3}}n^{-\frac{1}{3}}$.
Note that $\mathcal{T}_{n,3}$ consists of {\it maximal} planar graphs, i.e., planar graphs with $3n-6$ edges and $2n-4$ triangular faces, including the external face. In order to obtain the upper bound on $p_c$, the authors first recursively construct a certain very ``regular'' triangulation of a triangle and, second, partition the edges of the random graph in a corresponding recursive manner, at each step covering respective parts of a triangulation.

In~\cite{riordan}, Riordan established a general result that allows to achieve upper bounds --- which are fairly sharp for many properties --- on $p_c(F)$ for arbitrary graphs $F$. For example, it implies tight results for certain lattices, for hypercubes, and for the $k$-th power of Hamiltonian cycles for $k \geq 3$. Recently~\cite{kth_powers}, a refined version of Riordan's result was used to obtain {\it sharp threshold} for $k$-th powers of Hamilton cycles for $k\geq 4$. Riordan's result states that, under certain restrictions, $p_c=O(n^{-1/\gamma_F})$, where 
    $$
    \gamma_F= \max\limits_{F'\subset F: V(F')\geq 3} \frac{E(F')}{V(F')-2}.
    $$
    Let us note that $\gamma_F=3$ for any $3 \leq k \leq n$ and $F\in \mathcal{T}_{n, k}$. Indeed, one can just take any triangle as a subgraph $F'$ in the definition of $\gamma_F$, and on the other hand, each subgraph with $v\geq 3$ vertices contains at most $3v-6$ edges since it is planar. Thus, this result gives $p_c(\mathcal{T}_{n, k})=O(n^{-\frac{1}{3}})$ for all $3 \leq k \leq n$, implying the immediate improvement of the result of Bollob\'{a}s and Frieze: $p_c(\mathcal{T}_{n, 3})=\Theta(n^{-1/3})$, as in Theorem~\ref{th:main}. More generally, when $\alpha(n)=O(1/\log n)$ the lower bound (which we prove in Section~\ref{sc:lower}) equals $\Omega(n^{-\frac{1}{3}})$ as well, so Riordan's result provides the exact order of magnitude of $p_c$ in this case. 

Finally, as we have already observed, Theorem~\ref{th:main} for $k=n$ follows from known results. Indeed, Kahn, Narayanan and Park established~\cite{sec_power} the order of magnitude for the threshold for the square of a Hamilton cycle $F$, and quite recently the second author~\cite{zhuk} proved that $p_c(F)=(1+o(1))\sqrt{e/n}$. Since the square of a Hamilton path $F\in\mathcal{T}_{n,n}$ --- see Fig.~\ref{comb_triang} --- this result, together with our lower bound (see Section~\ref{sc:lower}), immediately implies $p_c(\mathcal{T}_{n,n})=\Theta(n^{-\frac{1}{2}})$, as in Theorem~\ref{th:main}.

Thus, our main contribution is the order of magnitude of $p_c(\mathcal{T}_{n,k})$ for all the intermediate values of $k$ (between $\omega(n/\log n)$ and $n-1$). We would also like to mention a related result on the threshold for the emergence of a maximal bipartite planar subgraph and maximal planar subgraph of a fixed girth by Fern\'{a}ndez, Sieger, and Tait~\cite{planar_girth}.

\vspace{\baselineskip}

\paragraph{Proof outline.} Recall~\eqref{eq:s-def},~\eqref{eq:alpha-def}. 
The lower bound in Theorem~\ref{th:main} is due to a  straightforward union bound argument, which is presented in Section~\ref{sc:lower}. For the upper bound, for every $k$, we give an explicit construction of a graph $T$ from $\mathcal{T}_{n,k}$ and show that, for $p\gg n^{-1/(2+s/n)}$, whp~\footnote{Henceforth {\it whp} stands for {\it with high probability}, i.e., with probability tending to $1$ as $n\to \infty$.} $\mathbf{G}\sim G(n,p)$ contains an isomorphic copy of $T$. We implicitly rely on the \textit{fragmentation} technique that has been used to prove the \textit{spread} lemma~\cite{spread_lm}, the Kahn-Kalai conjecture~\cite{kahn-kalai,park}, Talagrand’s fractional version of the conjecture~\cite{fract_expect_thresholds,talagran}, and to estimate the threshold probability for the square of a Hamilton cycle~\cite{sec_power,zhuk}. This method is applicable for monotone properties with good {\it spreadness} properties: a hypergraph $\mathcal{H}$ is called \textit{$q$-spread} if, for every subset $I$, $|\mathcal{H} \cap \langle I\rangle| \leq q^{|I|} |\mathcal{H}|$, where $ \langle I \rangle$ denotes all subsets of $V(\mathcal{H})$ that contain $I$ as a subset.
The proofs from~\cite{sec_power,zhuk} for the square of a Hamilton cycle were generalised in~\cite{f_cycles}, establishing tight (in the order of magnitude) upper bounds for $p_c(F)$ for graphs $F$ that generate well-spread properties in the following stronger sense. Let $F$ be any graph on $n$ vertices and let $\mathcal{F}=\mathcal{F}(F)$ be the set of all copies of $F$ in the complete graph $K_n$. For $\ell:=|E(F)|$ we view $\mathcal{F}$ as an $\ell$-uniform\footnote{Following the standard notation we call hypergraph {\it $\ell$-bounded} if cardinalities of its edges do not exceed $\ell$, and {\it $\ell$-uniform} if all its edges contain exactly $\ell$ vertices.\label{fn:hypergraph_not}} hypergraph on the vertex set $\Sigma := {[n] \choose 2}$, by identifying graphs with their sets of edges.
\setcounter{reusefn}{\value{footnote}}
\begin{defin}
    Let $q, \beta, \delta \in (0, 1)$. An $\ell$-bounded\refoot{fn:hypergraph_not} hypergraph $\mathcal{F}$ is $(q, \beta, \delta)$-superspread if it is $q$-spread and, for any subset $I\subset\Sigma$ of size $|I| \leq \delta \ell$, we have
    \begin{equation}
    \label{superspread}
        |\mathcal{F} \cap \langle I \rangle| \leq q^{|I|} {\ell}^{-\beta c_I} |\mathcal{F}|,
    \end{equation}
    where $c_I$ is the number of components of $I$ as a subgraph of $K_n$.
\end{defin}
\noindent
The main result of~\cite{f_cycles} establishes an upper bound for $p_c(F)$, relying only on the condition of superspreadness with suitable parameters.
\begin{theorem}[\cite{f_cycles}, Theorem 2.2]
\label{main_th}
Let $d, \beta, \delta, \varepsilon > 0$ with $\beta, \delta < 1$. There is a fixed constant $C_0=C_0(\varepsilon)$ such that, for all $C \geq C_0$ and $n\in \mathbb{N}$, the following holds. If $F$ is a subgraph of $K_n$ with maximum degree $\Delta(F) \leq d$,  ${\ell} := |E(F)| = \omega(1)$, and the hypergraph $\mathcal{F}=\mathcal{F}(F)$ is $(q, \beta, \delta)$-superspread with $q \geq 4{\ell} / (Cn^2)$, then, for $\mathbf{G} \sim G(n,Cq)$,
$$
\mathbb{P} (\text{there exists }F'\in\mathcal{F}\text{ such that }F' \subset \mathbf{G}) \geq 1 - \varepsilon .$$
\end{theorem}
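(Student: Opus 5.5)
The plan is to follow the fragmentation strategy of Kahn--Narayanan--Park for the square of a Hamilton cycle, as streamlined in the proofs of the Park--Pham theorem, and to use the superspread condition~\eqref{superspread} to remove the logarithmic loss. We expose $\mathbf{G}\sim G(n,Cq)$ in rounds: $\mathbf{G}=W_1\cup\dots\cup W_r\cup W_{\mathrm{fin}}$, with $W_i\sim G(n,C_iq)$ independent and $\sum_i C_i+C_{\mathrm{fin}}\le C$. Starting from $\mathcal{F}_0=\mathcal{F}$, after round $i$ we replace each $F'\in\mathcal{F}_{i-1}$ by a \emph{minimal fragment}. That is, among all $F''\in\mathcal{F}_{i-1}$ with $F''\subset F'\cup W_i$, we choose one minimising $|F''\setminus W_i|$, and we set the fragment to be $F''\setminus W_i$. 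The new family $\mathcal{F}_i$ consists of these fragments. If every $\mathcal{F}_i$ is nonempty and the final fragments are all covered by $W_{\mathrm{fin}}$, then $\mathbf{G}$ contains a copy of $F$.

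\textbf{Step 1 (fragment size).} We show that with probability $1-o(1)$ most fragments have size at most $\ell_i\le \ell_{i-1}/2$, or even shrink by a large constant factor. We count ``bad pairs'' $(W,F')$ whose fragment $J$ has size $t$. Such a pair is determined by $W\cup F'$ together with $J$, and the plain $q$-spread bound $|\mathcal{F}\cap\langle J\rangle|\le q^{t}|\mathcal{F}|$ gives the Park--Pham estimate: roughly $(L/C_i)^{t}$ per pair, with a factor $L$ coming from choosing $J$ inside the relevant edge set. Here the superspread refinement enters. A fragment $J\subset F'$, with $\Delta(F')\le d$, having $t$ edges and $c$ components, can be specified by choosing $c$ root edges of $F'$ (at most $\binom{\ell}{c}\le \ell^{c}$ choices) and then growing each component, which gives at most $(e d)^{t}$ choices. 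So as long as $t\le\delta\ell$, the count is $\ell^{c}(ed)^{t}\cdot q^{t}\ell^{-\beta c}|\mathcal{F}|$. This is still too large by $\ell^{(1-\beta)c}$. To absorb that factor, I would use the minimality of the fragment to show that each component of a minimal fragment has size at least of order $\log \ell$, or to charge the root choices to edges of $W_i$. Either way, $\ell^{c}$ becomes $\exp(O(t))$, which a large $C_i$ kills. For large $t>\delta\ell$ the plain $q$-spread bound suffices, exactly as in Park--Pham, at constant cost in $C$ per round.

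\textbf{Step 2 (number of rounds and final round).} Because fragment sizes decrease geometrically, $r=O(\log \ell)$ rounds suffice to reach tiny fragments. However, $\sum C_i$ must stay bounded, so I would take $C_i$ growing geometrically, say $C_i=C_0 2^{-i/2}\cdot$const. This is allowed because, once fragments are of size $\ell_i\ll\ell$, the superspread gain $\ell^{-\beta c}$ makes each round need only $C_i$ that tends to $0$. The last round handles fragments of bounded size or few components. For them I would run a Janson/second-moment argument on the residual family $\mathcal{F}_r$, with the final probability $C_{\mathrm{fin}}q\ge 4\ell/n^2$ ensuring that the expected number of covered fragments grows; this is where the hypothesis $q\ge 4\ell/(Cn^2)$ enters.

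The main obstacle is Step~1 in the regime $t\le\delta\ell$: showing that the combinatorial cost of choosing the component structure of a fragment, about $\ell^{c}$, is genuinely compensated by $\ell^{-\beta c}$ only when $\beta<1$. I would first try to prove that minimal fragments cannot have many tiny components, since a tiny component could be rerouted through $W_i$. If that fails, I would instead count components through vertices of $W_i$ rather than edges of $F'$.
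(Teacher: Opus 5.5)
\textbf{Verdict: genuine gap.} The paper does not prove this theorem; it quotes it from \cite{f_cycles}. Judged on its own, your sketch has a gap exactly where you place the main obstacle, and the round schedule you build around it does not give a bounded total $C$.

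\textbf{Step 1: the loss comes from how you set up the count.} The loss $\ell^{(1-\beta)c}$ is an artefact, and neither remedy you propose repairs it. Minimality gives no lower bound on component sizes: if $W_i\supseteq F'\setminus\{e\}$ and no member of the current family lies inside $W_i$, the fragment is the single edge $e$ (a matching is equally possible). ``Charging roots to $W_i$'' is not an argument. The real problem is that you locate $J$ inside $F'$. In the Park--Pham count a bad pair is encoded by $Z=W_i\cup J$, a set of size $w_i+t$, not by $W_i\cup F'$. The copy $F'\supseteq J$ is counted only afterwards, through spreadness, so it cannot be used to locate $J$.

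Instead, let $S$ be the current fragment of $F'$ and let $S_Z$ be a canonical member of the current family $\mathcal{F}_{i-1}$ contained in $Z$. Then $S_Z\subseteq Z\subseteq W_i\cup S$, and minimality forces $J=S_Z\setminus W_i$. So $J$ sits inside $S_Z$, which has at most $\ell_{i-1}$ edges. This gives at most $\binom{2\ell_{i-1}}{c}(2ed)^t$ choices for $J$. Meanwhile the fragment family still obeys the bound $q^t\ell^{-\beta c}|\mathcal{F}|$ with the \emph{original} $\ell$ (for $t\le\delta\ell$), because a fragment of $F'$ is a subset of $F'$. Hence the sum over components is at most $\sum_{c\ge1}\binom{2\ell_{i-1}}{c}\ell^{-\beta c}\le\exp(2\ell_{i-1}\ell^{-\beta})$. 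This is absorbed by $(2ed/C)^t$ as soon as $t\ge K\ell_{i-1}\ell^{-\beta}$.

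\textbf{Step 2: the schedule fails, and what replaces it.} Each round's estimate carries the per-edge factor $(N/w_i)^t(2ed\,q)^t=(2ed/C_i)^t$. The superspread gain does nothing to this factor; it only pays for root choices. So in your method every round that must shrink nonempty fragments needs $C_i$ at least a constant multiple of $d$. Then $\Theta(\log\ell)$ halving rounds cost $\Theta(\log\ell)$ in total, which is exactly the logarithmic loss the theorem removes. Your claim that $C_i$ may decay geometrically is therefore unfounded.

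What the corrected count actually gives is \emph{polynomial} shrinkage per round: fragments of size at most $\ell_{i-1}$ become fragments of size at most $K\ell_{i-1}\ell^{-\beta}$. In the first round, sizes $t>\delta\ell$ are handled by plain spread, since $\binom{\ell}{t}C^{-t}\le(e/(\delta C))^t$. After $\lceil 1/\beta\rceil$ rounds the fragments have bounded size. One more round then leaves a nonempty fragment for only an expected $O(\ell^{-\beta})=o(1)$ proportion of the family; this is where $\ell=\omega(1)$ is used. So some copy lies entirely in $W_1\cup\dots\cup W_r$. Thus $O(1/\beta)$ rounds of constant cost suffice, and no Janson or second-moment step is needed. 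As far as I can tell, the hypothesis $q\ge 4\ell/(Cn^2)$ is not a second-moment condition. It only guarantees that the random edge sets used in the rounds have size $\Omega(\ell)$, so that $G(n,Cq)$ can be coupled with their union.
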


If a triangulation $T=T(n,k)\in\mathcal{T}_{n,k}$ has a bounded maximum degree and is $(Kn^{-\frac{1}{3-\alpha}}, \beta, \delta)$-superspread with some constants $K>0$ and $0<\beta, \delta < 1$, then Theorem~\ref{main_th} immediately gives the desired upper bound for $p_c(T)$. This strategy works well when $s=\Omega(n)$ or $s\leq n^{\varepsilon}$ for some $\varepsilon < 1$. In order to establish this (super)spreadness property for such $s$, we present a new sufficient and relatively simple condition (Lemma~\ref{lm: q-spread property}) in terms of densities of subgraphs of $F$: roughly speaking, the densities of all subgraphs do not exceed the density of the entire graph $F$, with a small margin. In the main case of $s=\Omega(n)$ (Section~\ref{main_case}), we construct such a triangulation $T$ with bounded degree that satisfies this condition. The triangulation consists of nested cycles of size $k$ and areas between any two consecutive cycles are ``evenly'' triangulated. Its symmetric structure allows to prove an isoperimetric inequality~\eqref{triang_ineq3} for its cycles, that allows to estimate the density of each subgraph of $T$ easily.

Unfortunately, in the case when $s(n)=o(1)$, it is not possible to directly apply this technology for any representative $T\in \mathcal{T}_{n, k}$, since each such $T$ either contains subgraphs with densities exceeding the density of entire $T$ (e.g. $K_4$), violating the requirements of Lemma~\ref{lm: q-spread property}, or has an internal vertex with unbounded degree, violating the requirements of Theorem~\ref{main_th}.  The first problem appears crucial when the number of internal vertices is large: $s=n^{1-o(1)}$. In this case, the factor $\ell^{-\beta c_I}$ in~\eqref{superspread} is not possible to get for $I$ consisting of small dense subgraphs (say, $K_4$). This factor is crucial, since the parameter $\beta$ is responsible for the number of fragmentation steps (constant $\beta$ implies bounded number of steps) and, apparently, the threshold probability for similar triangulations with larger number of internal vertices has an additional non-constant factor. To prove the superspreadness condition in the case $s=O(n^{\varepsilon})$ and apply Theorem~\ref{main_th}, we consider a triangulation $T$ with bounded degree whose `dense' patterns are $K_4$, and verify the superspread condition with $q=\Theta(n^{-1/2})=\Theta\left(n^{-\frac{1}{3-\alpha}}\right)$ directly by considering the contribution of disjoint unions of $K_4$ to the spread separately.

In the remaining intermediate regime $n^{1-o(1)} \leq s(n) \ll n$, where the order of the magnitude of the threshold can be higher than $n^{-1/2}$, we are forced to construct triangulations with unbounded maximum degree and therefore, we are not able to apply Theorem~\ref{main_th}. Instead, we follow another framework by Spiro~\cite{spiro}, who suggested a generalisation of the spreadness property that does not require $F$ to have a bounded degree: 
\begin{defin}
\label{def:spiro_spread}
    Let $0 < q \leq 1$ be a real number and ${\ell}_1 > \ldots > {\ell}_{t}$ be positive integers, where $t\geq 1$. A hypergraph $\mathcal{F}$ with $N$-vertex set $V$ is $(q; {\ell}_1, \ldots , {\ell}_{t})$-spread if $\mathcal{F}$ is non-empty, $\ell_1$-bounded, and if for all $A \subseteq V$ such that $A$ belongs to at least one edge in $\mathcal{F}$ and $\ell_{t'} \geq |A| \geq {\ell}_{t'+1}$ for some $1 \leq t' < t$, we have that, for all $i \geq {\ell}_{t'+1}$,
    \begin{equation}
      \label{eq:M_j-A} 
      M_i (A) := |{S \in \mathcal{F} : |A \cap S| \geq i}| \leq q^i |\mathcal{F}|.
    \end{equation}
\end{defin}
The following generalisation of Theorem~\ref{main_th} states that a random set of size $Ct q N$ contains an edge of an ${\ell}_1$-uniform $(q; \ell_1, \ldots , \ell_{t}, 1)$-spread hypergraph whp:

\begin{theorem}[\cite{spiro}]
\label{spiro-theorem}
    There exists an absolute constant $K_0$ such that the following holds. Let $\mathcal{F}$ be an $\ell_1$-uniform $(q; \ell_1, \ldots , \ell_{t}, 1)$-spread hypergraph. If $W$ is a set of $C t q N$ vertices chosen uniformly at random  with $C \geq K_0$, then
$$\mathbb{P}(W \text{ contains an edge of } \mathcal{F}) \geq 1 - \frac{K_0}{Ct} .$$
\end{theorem}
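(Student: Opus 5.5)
The plan is to prove Theorem~\ref{spiro-theorem} by the fragmentation (``minimal fragment'') method of Alweiss--Lovett--Wu--Zhang, Frankston--Kahn--Narayanan--Park and Park--Pham. I would run it in $t$ rounds, with round $j$ driving the fragment sizes from at most $\ell_j$ down to at most $\ell_{j+1}$ (with $\ell_{t+1}:=1$, and a final trivial step to $0$). Concretely, I expose $W$ as a union $W_1\cup\dots\cup W_t$ of disjoint uniformly random sets, each of size $CqN$. The event ``$W$ contains an edge'' only becomes more likely if each $W_j$ is replaced by an independent $p$-random subset with $p\approx Cq$, and I use this monotonicity freely. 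Throughout I maintain a subhypergraph $\mathcal{F}_j$, obtained by replacing each $S\in\mathcal{F}$ by a ``fragment'' $S'\setminus(W_1\cup\dots\cup W_j)$. Here $S'\in\mathcal{F}$ is chosen so that $S'\subseteq S\cup W_1\cup\dots\cup W_j$ and $|S'\setminus (W_1\cup\dots\cup W_j)|$ is minimal.

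The core of the proof is a one-round lemma. Suppose every current fragment has size at most $\ell_j$, and consider $S$ whose minimal fragment $T$ after adding $W_j$ still has size $i\ge \ell_{j+1}$. I would bound the number of such pairs $(W_j,S)$ by the standard encoding argument. The pair is recovered from the set $Z=W_j\cup T$, which has size $|W_j|+i$ and so has only $\binom{N}{|W_j|+i}$ choices, together with a specification of $T$ inside some $S''\in\mathcal{F}$ contained in $Z\cup(\text{earlier }W\text{'s})$, and the remaining data of $S$. The minimality of $T$ forces $S\cap S''\supseteq T$. The number of $S$ compatible with this is at most $M_i(A)$ for the appropriate set $A$ of size in $[\ell_{j+1},\ell_j]$, and Definition~\ref{def:spiro_spread} gives $M_i(A)\le q^i|\mathcal{F}|$. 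Comparing $\binom{N}{|W_j|+i}$ with $\binom{N}{|W_j|}$ gains a factor roughly $(N/(CqN))^i=(Cq)^{-i}$. Combined with $q^i$ and the at most $2^{\ell_j}$ choices for $T$ inside $S''$, this leaves a factor of order $(K/C)^{i}$ per bad $S$, summed over $i\ge\ell_{j+1}$.

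Averaging over $S$ and applying Markov, round $j$ therefore leaves a $\ge 1-O(K/C)^{\ell_{j+1}}$ fraction of ``good'' $S$ (fragments of size $<\ell_{j+1}$) with probability $1-O(1/C)$. I then pass to the good fragments and iterate; the hypothesis for the next round is inherited because the spread condition for sets of size in $[\ell_{j+2},\ell_{j+1}]$ is exactly what the next level of Definition~\ref{def:spiro_spread} provides. In the final round, which uses the trailing ``$1$'' in the spread sequence, fragments of size $\ge 1$ are killed, so some fragment is empty. This means some $S'\in\mathcal{F}$ lies inside $W$.

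The main obstacle is the bookkeeping of failure probabilities so that the total is $K_0/(Ct)$ rather than the naive $t\cdot O(1/C)$. What I would try first is to apply Markov not per round but to a single potential: the total mass of bad $S$ summed over all rounds. The per-round expected bad mass is $\sum_{i\ge \ell_{j+1}}(K/C)^i\lesssim (K/C)^{\ell_{j+1}}$, and the rounds use disjoint random sets. So after rescaling $W_j$ to have size $CqN$ with $C$ multiplied by $t$, these terms sum to $O(1/(Ct))$ in total, and one Markov inequality gives the claimed bound $1-K_0/(Ct)$. I expect the delicate point in the encoding step to be making sure the sets $A$ to which the spread hypothesis is applied really have size in the window $[\ell_{j+1},\ell_j]$ and lie in an edge of $\mathcal{F}$.
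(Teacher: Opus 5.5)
The paper does not prove this statement; it quotes it from Spiro. So your argument has to stand on its own. Its architecture is the right one: iterated minimal fragments, one round per window $[\ell_{j+1},\ell_j]$, the encoding via $Z=W_j\cup T$, and $T\subseteq S\cap S''$ by minimality. But the one-round estimate does not close as written.

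You pay ``at most $2^{\ell_j}$ choices for $T$ inside $S''$'' and bound the number of $S$ by $M_i(A)\le q^i|\mathcal F|$. That leaves $2^{\ell_j}C^{-i}$, which is of order $(K/C)^i$ only if $i\gtrsim \ell_j/\log_2 K$, i.e.\ only when $\ell_{j+1}\ge c\,\ell_j$. This is the halving regime of Frankston--Kahn--Narayanan--Park. It needs about $\log \ell_1$ rounds, which is exactly the logarithmic loss the $(q;\ell_1,\dots,\ell_t,1)$ notion is designed to avoid. In this paper's application $\ell_j/\ell_{j+1}=n^{1/5}$, so $2^{\ell_j}(K/C)^{\ell_{j+1}}$ is astronomically large.

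The missing idea is to choose $S$ first and only then specify $T$ as a subset of $S\cap S''$, at cost $2^{|S\cap S''|}$. The number of bad pairs $(W_j,S)$ with $|T|=i$ is then at most $\binom{N}{|W_j|+i}\sum_{i'\ge i}2^{i'}M_{i'}(S'')\le \binom{N}{|W_j|+i}\sum_{i'\ge i}(2q)^{i'}|\mathcal F|$. Dividing by $\binom{N}{|W_j|}|\mathcal F|$ gives $O\big((2/C)^i\big)$. This is precisely where the clause ``for all $i\ge \ell_{t'+1}$'' in Definition~\ref{def:spiro_spread} enters, and your argument never uses it.

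Relatedly, $S''$ should be chosen canonically among the \emph{current fragments} contained in $Z$, not among edges of $\mathcal F$ inside $Z\cup W_1\cup\dots\cup W_{j-1}$. Only then do you know that $\ell_{j+1}\le i\le |S''|\le \ell_j$ and that $S''$ lies in an edge of $\mathcal F$. The $M_{i'}$ counted over fragments is then at most the $M_{i'}$ counted over $\mathcal F$, since each fragment is a subset of its original edge.

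Your single-potential Markov step is a good idea, and it even removes the need to keep the fragment hypergraph large. Let $X_j$ be the number of $S$ that become bad in round $j$, divided by $|\mathcal F|$. The corrected bound gives $\mathbb E[X_j]\le\sum_{i\ge\ell_{j+1}}2(2/C_j)^i$ when $|W_j|=C_jqN$, conditionally on any history, and failure forces $\sum_j X_j=1$.

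With the even split $|W_j|=CqN$, however, this sum is not $O(1/(Ct))$. The last round has $\ell_{t+1}=1$ and alone contributes order $1/C$. ``Multiplying $C$ by $t$'' in every round would spend $t$ times the available budget $CtqN$, so as written you only get $1-O(1/C)$.

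To reach $1-K_0/(Ct)$, split $W$ unevenly. Since the $\ell$'s are distinct integers, $\ell_{j+1}\ge t+1-j=:m$. Give round $j$ a set of size about $Ct^{1/m}qN$; its contribution becomes $O\big((2/C)^m/t\big)$, while $\sum_{m=1}^{t}t^{1/m}=O(t)$ keeps $|W|=O(CtqN)$.
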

\noindent
In order to apply Theorem~\ref{spiro-theorem} to the hypergraph $\mathcal{F}(F)$, it is not essential that $F$ has bounded degree. However, the formal verification of this generalised spreadness property is technically more difficult, which makes the framework of Spiro harder to apply despite its greater generality. Verifying the conditions of this generalised spreadness property in the last `intermediate' regime of $s(n)$ for a suitable triangulation $T \in \mathcal{T}_{n, k}$ requires a more careful approximation of the number of ways to choose a subgraph with fixed parameters.

\paragraph{Organisation.}  Section~\ref{sc:tech} states two auxiliary claims. In Section~\ref{sc:lower} we prove the lower bound on the threshold probability. The next three sections are devoted to the proof of the upper bound, which is split according to the outline above: Section~\ref{main_case} considers $s(n)=\Omega(n)$, Section~\ref{small_s} deals with $s(n)=O(n^{\varepsilon})$ for some fixed $\varepsilon < 1$, and in Section~\ref{interm_case} we cover the remaining case $n^{1-o(1)} \leq s(n) = o(n)$. Section~\ref{sc:further} discusses further directions.

\paragraph{Notation.} For any graph $G$ we denote by $\Delta(G)$ the maximum degree of $G$, by $E(G)$ and $V(G)$ sets of edges and vertices of G correspondingly, and by $v(G)$ the total number of vertices $|V(G)|$ of $G$.

%% file: tech_part.tex
\section{Preliminaries}
\label{sc:tech}

In this section, we present several auxiliary propositions that we use in our proofs.

First, in our constructions of triangulations, we need to select edges of a cycle in a fairly even way. This is possible, for example, due to the following (suboptimal) proposition.
\begin{claim}
\label{cl:1}
    Let $0 \leq a < b$ be integers and let $C$ be a cycle of length $b$. There exists $U\subset E(C)$ of size $a$ such that for every arc $A\subset C$, the number of distinguished edges in it is $|E(A)\cap U|\leq |E(A)| \frac{a}{b}+1 .$
\end{claim}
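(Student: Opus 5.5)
We will use the standard ``floor'' (Beatty sequence) construction. Label the edges of $C$ cyclically as $e_0,e_1,\ldots,e_{b-1}$, and distinguish $e_j$ exactly when
$$\left\lfloor \frac{(j+1)a}{b}\right\rfloor - \left\lfloor \frac{ja}{b}\right\rfloor = 1 .$$
Since $a<b$, each difference of consecutive floors is either $0$ or $1$. The differences telescope over $j=0,\ldots,b-1$ to $\lfloor a\rfloor-\lfloor 0\rfloor=a$, so exactly $a$ edges are distinguished; let $U$ be this set. If $a=0$ then $U=\emptyset$ and the statement is trivial, so we may assume $a\ge1$.

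Now fix an arc $A$ with $|E(A)|=L$, where $0\le L\le b$. We split into two cases according to whether the arc wraps around the index $b-1\to0$.

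If $A$ consists of $e_i,\ldots,e_{i+L-1}$ with $i+L\le b$, the count telescopes:
$$|E(A)\cap U|=\left\lfloor\frac{(i+L)a}{b}\right\rfloor-\left\lfloor\frac{ia}{b}\right\rfloor\le \frac{La}{b}+1,$$
using $\lfloor x+y\rfloor-\lfloor x\rfloor< y+1$.

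If $A$ wraps around, it consists of $e_i,\ldots,e_{b-1}$ followed by $e_0,\ldots,e_{r-1}$, where $r=i+L-b$. The two pieces contribute
$$\Bigl(a-\left\lfloor \tfrac{ia}{b}\right\rfloor\Bigr)+\left\lfloor \tfrac{ra}{b}\right\rfloor .$$
To bound this, extend the indexing periodically: $f(j)=\lfloor ja/b\rfloor$ satisfies $f(j+b)=f(j)+a$. The total is then $f(i+L)-f(i)$, and the same inequality applies.

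The main subtlety is that the wrap-around case reduces to the periodic extension. With that observation the whole argument is immediate. The $+1$ slack in the claim is exactly the rounding loss from the floor difference.
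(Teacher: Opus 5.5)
Your proof is correct and uses essentially the paper's construction. The paper distinguishes the edges starting at positions $\lfloor ib/a\rfloor$, $i=1,\ldots,a$, which is the same evenly spaced (Beatty-type) choice as your edges where $\lfloor ja/b\rfloor$ jumps, up to an index shift; your telescoping argument with the periodic extension $f(j+b)=f(j)+a$ supplies the verification that the paper leaves implicit.
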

Indeed, let $C=(c_1\ldots c_b)$. The choice of $U$ is obvious when $a=0$ or $b$ is divisible by $a$. Otherwise, one way to do it is to include in $U$ all vertices $c_{\lfloor ib/a\rfloor}$. 
\vspace{\baselineskip} 

 We will also rely on the following proposition that was used, for example, in the proof of Theorem~\ref{main_th} in~\cite[Lemma 2.3]{f_cycles} and in~\cite[Lemma 2.3]{sec_power}:

 \begin{lemma}
 \label{lm:subgraphs_count-rooted}
     For a graph $G$ of a maximum degree $\Delta$, the number of connected, $i$-edge subgraphs of $G$ containing a given vertex is less than $(e\Delta)^i$.
 \end{lemma}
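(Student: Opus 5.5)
The plan is to encode each connected $i$-edge subgraph containing the given vertex $v$ as a closed walk, and then count the walks. Let $H$ be such a subgraph. Take a spanning tree of $H$ rooted at $v$ and add the remaining edges of $H$ as pendant edges. Equivalently, perform a depth-first traversal of $H$ from $v$ that crosses every edge of $H$ exactly twice, once in each direction. This gives a closed walk in $G$ of length exactly $2i$ that starts at $v$ and whose edge set is $E(H)$. The map $H\mapsto$ (this walk) is injective, because $H$ is recovered as the set of edges the walk uses. Bounding by walks directly only gives roughly $\Delta^{2i}$, which is too weak, so the encoding has to be sharper.

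The sharper encoding views the traversal as a rooted plane tree $\tau$ with $i$ edges, obtained by unfolding the walk: each non-tree edge of $H$ becomes an extra leaf. Then $H$ is determined by two pieces of data. The first is the shape of $\tau$, a rooted ordered tree with $i$ edges; there are $C_i=\frac{1}{i+1}\binom{2i}{i}<4^i$ such shapes. The second is a labelling of the edges of $\tau$: each child edge is labelled by which of the at most $\Delta$ neighbours of its parent's image in $G$ it goes to. This gives at most $\Delta^i$ labellings. Altogether this yields the bound $(4\Delta)^i$.

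To reach $e\Delta$, I would tighten the count by using unordered children instead of ordered ones. At a vertex of $\tau$ with $d_u$ children, those children form a $d_u$-subset of at most $\Delta$ neighbours. The number of such choices is $\binom{\Delta}{d_u}$ rather than an ordered shape times $\Delta^{d_u}$. So the count becomes
$$\sum_{(d_u)}\prod_u\binom{\Delta}{d_u},$$
the sum running over child-degree sequences of trees with $i+1$ vertices. The standard identity that counts subtrees of the infinite $\Delta$-ary tree gives exactly
$$\frac{1}{(\Delta-1)i+1}\binom{\Delta i}{i}\le\binom{\Delta i}{i}\le\left(\frac{e\Delta i}{i}\right)^i=(e\Delta)^i,$$
with strict inequality once $i\ge1$; the case $i=0$ is trivial. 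Here the infinite tree is the "universal cover" tree of branching $\Delta$.

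The key step, and the main obstacle, is to make the encoding injective while allowing $H$ to contain cycles. I would handle this by choosing the tree canonically, for instance as a BFS tree with a fixed vertex ordering, so that the decoded edge set is $H$. I would also check that an edge of $H$ counted twice, which can happen via the pendant-leaf trick, only overcounts, which is harmless for an upper bound. Alternatively, I could simply cite the known bound from the references mentioned before the statement.
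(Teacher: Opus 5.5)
Your route is the standard argument behind this lemma, which the paper only cites from its references. You unfold $H$ into a rooted tree $\tau$ with exactly $i$ edges: a spanning tree of $H$, plus one pendant leaf for each non-tree edge, attached at one endpoint. You then encode $\tau$ as a rooted subtree of the infinite $\Delta$-ary tree. Your $(4\Delta)^i$ bound is fine.

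\textbf{The closed form you quote is off by one, so the final chain does not prove $(e\Delta)^i$ as written.} The sum $\sum_\tau\prod_u\binom{\Delta}{d_u}$ over plane trees with $i+1$ vertices counts rooted subtrees of the $\Delta$-ary tree with $i+1$ \emph{vertices}. That is the Fuss--Catalan number at $n=i+1$:
\[
\frac{1}{(\Delta-1)(i+1)+1}\binom{\Delta(i+1)}{i+1}=\frac{1}{i+1}\binom{\Delta(i+1)}{i}.
\]
By contrast, $\frac{1}{(\Delta-1)i+1}\binom{\Delta i}{i}$ counts subtrees with $i$ vertices. For $\Delta=2$ and $i=1$ it gives $1$, yet a vertex of degree $2$ lies in two one-edge subgraphs. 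So your chain starts from an underestimate.

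The repair is easy but needs a slightly sharper estimate than $\binom{N}{i}\le (eN/i)^i$, which loses a factor $(1+1/i)^i$ and gives only a non-strict bound at $i=1$. Using $\binom{N}{i}\le N^i/i!$ and $n!\ge e(n/e)^n$,
\[
\frac{1}{i+1}\binom{\Delta(i+1)}{i}\le\frac{\Delta^i(i+1)^i}{(i+1)!}\le\frac{(e\Delta)^i}{i+1}<(e\Delta)^i\qquad(i\ge 1).
\]
For $i=0$ the count is exactly $1=(e\Delta)^0$, so the strict statement genuinely needs $i\ge1$; that case is not ``trivial''.

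\textbf{The step you call the main obstacle is not where the work lies.} Injectivity needs no canonical BFS tree. Any fixed choice of $\tau$ for each $H$ gives an injective map, because $H$ is recovered as the image of $E(\tau)$.

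What does need an argument is that the children of every node of $\tau$ map to \emph{distinct} neighbours of that node's image. Only then do they form a $d_u$-subset, so that $\binom{\Delta}{d_u}$ is the right count. This holds because every edge of $H$ occurs exactly once in $\tau$, $H$ is simple, and pendant leaves have no children.

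Your remark that an edge counted twice ``only overcounts'' is mistaken. Duplicating an edge produces a tree with more than $i$ edges, which invalidates the count over $i$-edge trees. With the construction as you describe it this never happens. Once the Fuss--Catalan index is corrected and this distinctness is stated, the proof is complete.
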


%% file: lower_bound.tex
\section{Lower bound}
\label{sc:lower}

Let $N=N(n, k) := |\mathcal{T}_{n, k}|$ be the total number of $(n, k)$-triangulations. We roughly bound this value by the total number of labeled planar graphs with $m=m(n,k)$ edges. Turan's pioneering result~\cite{turan} states that unlabeled planar graphs with $n$ vertices and $m$ edges can be naturally encoded with $4m$ bits using depth-first search on a planar map. It implies that
$$N(n, k) \leq n! \cdot 2^{4m}.$$
Let us note that the exact number of rooted triangulations (i.e. with a fixed oriented edge on the boundary) of a $k$-gon is known --- see~\cite{brown},~\cite{triangle-number} and Tutte's classical result~\cite{tutte}. However, for our purposes of finding the order of magnitude of the threshold, a suboptimal bound above is sufficient.

Let $X$ be the number of $(n, k)$-triangulations in $\mathbf{G} \sim G(n, p)$. Recall~\eqref{eq:alpha-def}. Take, say, $p=\frac{1}{12}n^{-1/(3-\alpha)}$. By the union bound, we get:
$$
 \mathbb{P}[X > 0] \leq N(n, k) p^{m(n, k)} \leq en \left(\frac{n}{e}\right)^n (16p)^{3n-k-3}=n^{O(1)}e^{-n}(16/12)^{3n-k}\leq n^{O(1)}((4/3)^3/e)^n=o(1),
$$
so the lower bound in Theorem~\ref{th:main} is proved.

%% file: upper_bound.tex
\section{Upper bound:  \boldmath $s(n) = \Omega(n)$ }
\label{main_case}

The proof of the upper bound relies on the lemma below, which we believe may also be useful as an independent tool for determining thresholds of other graph properties.
\begin{lemma}
\label{lm: q-spread property}
    Let $\varepsilon, \delta > 0$. Let $F$ be a spanning subgraph of $K_n$ with $\Delta(F) \leq d = const$ and $C_1n \leq |E(F)| \leq C_2n$ for some constants $C_1, C_2 \geq 1$. Suppose that, for some $q>0$, and for every connected non-empty subgrpaph $I$  of $F$ the following inequalities hold:
    \begin{equation}
    \label{triang_eq}
    \begin{cases}
        v(I) \geq |I|/q + 1 + \varepsilon,\text{ if } |I| \leq \delta n, \\
        v(I) \geq |I|/q + 1,\text{ otherwise. }
    \end{cases}
    \end{equation}
    Then the hypergraph $\mathcal{H}$ of all copies of $F$ is $\left(8d C_2^{\varepsilon}e^{\varepsilon+1/q} n^{-1/q}, \varepsilon, \frac{\delta}{C_1}\right)$-superspread.
\end{lemma}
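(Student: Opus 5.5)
We fix a subgraph $I\subset F$, viewed as an edge set in $K_n$, with $c=c_I$ components $I_1,\dots,I_c$. We bound $|\mathcal{H}\cap\langle I\rangle|/|\mathcal{H}|$ as the probability that a uniformly random copy of $F$, i.e.\ $\sigma(F)$ for a uniform permutation $\sigma$ of $[n]$, contains $I$. The general approach is a union bound over embeddings. Let $v=v(I)$ and $e=|I|$. For $I\subset\sigma(F)$ we need an injective homomorphism $\phi:I\to F$ whose image is a subgraph $J\subset F$ isomorphic to $I$, and then $\sigma$ must map $V(J)$ onto $V(I)$ in the prescribed way. For a fixed $\phi$ this has probability $(n-v)!/n!\le (e/n)^{v}$ (up to harmless factors; we will use $n!/(n-v)!\ge (n/e)^v$). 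It then remains to count the embeddings $\phi$.

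We count embeddings component by component. A component $I_j$ is connected with $e_j$ edges. Its image in $F$ is a connected $e_j$-edge subgraph. By Lemma~\ref{lm:subgraphs_count-rooted}, once we fix a root vertex in $F$ (at most $n$ choices), there are fewer than $(ed)^{e_j}$ choices of the image subgraph. There are also at most $v_j!\le$ (something bounded) ways to match vertices. Better, we count homomorphic images via a spanning tree walk: fix a spanning tree of $I_j$ with $v_j-1$ edges, map the root ($n$ choices), and then extend edge by edge with at most $d$ choices each. This gives at most $n\,d^{v_j-1}$ embeddings of $I_j$, and hence at most $n^{c}d^{v}$ embeddings in total. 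Therefore
\[
\frac{|\mathcal{H}\cap\langle I\rangle|}{|\mathcal{H}|}\le n^{c}d^{v}\Big(\frac{e}{n}\Big)^{v}=(de)^{v}\,n^{c-v}.
\]

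Next we insert \eqref{triang_eq} for each component: $v_j\ge e_j/q+1+\varepsilon\mathbf 1[e_j\le\delta n]$. If $|I|\le \frac{\delta}{C_1}|E(F)|$, then $e\le\delta n\cdot C_2/C_1$. Here I would rather use $|E(F)|\ge C_1 n$, which gives $|I|\le \delta n$ and hence every $e_j\le\delta n$. Summing over the components gives $v-c\ge e/q+\varepsilon c$, so
\[
n^{c-v}\le n^{-e/q}\,n^{-\varepsilon c}\le n^{-e/q}\,(C_2 n)^{-\varepsilon c} C_2^{\varepsilon c}= n^{-e/q}\,\ell^{-\varepsilon c}C_2^{\varepsilon c},
\]
using $\ell\le C_2n$. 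For general $I$ (the $q$-spread part) we only use $v-c\ge e/q$.

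The main obstacle is the factor $(de)^{v}$, which must be absorbed into $q'^{\,e}$ with $q'=8dC_2^\varepsilon e^{\varepsilon+1/q}n^{-1/q}$. We write $v=(v-c)+c\le v-c+e$ since $c\le e$. We would also like to bound $v-c$ linearly in $e$, but $v-c\le e$ holds only for each connected piece: $v_j-1\le e_j$. So $v\le 2e$ and $(de)^v\le (de)^{2e}$. This overshoots the stated constant (which has $d$ to the first power), so the counting must be done more carefully. I would instead use the rooted count of Lemma~\ref{lm:subgraphs_count-rooted}, which gives $(ed)^{e_j}$ per component and costs $d^{e}$ rather than $d^{v}$. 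The vertex-ordering probability $(e/n)^{v}$ would then be split as $(e/n)^{c}\cdot(e/n)^{v-c}$, where the upper bound $v-c\le e_j$-type terms feed the $e^{1/q}$, $e^{\varepsilon}$ factors through $v-c\le e/q+\ldots$, which is used in the right direction after trading $n^{-(v-c)}$ against $(e)^{v-c}$. The bookkeeping of these constants (the $8$ absorbs $2^{e}$-type sums over the choice of component sizes and roots) is where care is needed. The exponent of $n$, namely $-e/q-\varepsilon c$, is the essential content, and the two cases of \eqref{triang_eq} give exactly the superspread bound and the $q$-spread bound respectively.
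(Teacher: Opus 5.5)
Your route is sound and genuinely different from the paper's count. You take a uniformly random permutation and a union bound over labelled embeddings $\phi\colon V(I)\to V(F)$. A spanning-tree walk gives at most $n^{c}d^{v-c}$ of them, each realised with probability $(n-v)!/n!\le (e/n)^{v}$. The paper instead picks roots in $2^{|I|}$ ways, orders roots and outside vertices in $(n-v+c)!$ ways, and inserts the remaining vertices with at most $d$ choices each. For a general $F$ your count is arguably more transparent, and both produce the decisive factor $n^{-(v-c)}$.

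\textbf{The constant is never closed.} The remedy you sketch is the wrong tool: Lemma~\ref{lm:subgraphs_count-rooted} counts unlabelled image subgraphs, so you would still have to count the ways of mapping $I_j$ onto its image. The fix is already in your tree walk, which gives $n^{c}d^{v-c}$ rather than $n^{c}d^{v}$. Since every component is connected and has at least one edge, $v-c\le |I|$ and $v\le 2|I|$, so
\[
\frac{|\mathcal{H}\cap\langle I\rangle|}{|\mathcal{H}|}\le e^{v}d^{v-c}n^{-(v-c)}\le (e^{2}d)^{|I|}\,n^{-(v-c)},
\]
and $e^{2}d<8d\le 8dC_2^{\varepsilon}e^{\varepsilon+1/q}$. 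For the spread bound use $v-c\ge |I|/q$. For the superspread bound use $v-c\ge |I|/q+\varepsilon c$ together with $n^{-\varepsilon c}\le C_2^{\varepsilon|I|}\ell^{-\varepsilon c}$, where $\ell=|E(F)|$.

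\textbf{The range step is backwards.} You wrote ``use $|E(F)|\ge C_1 n$, which gives $|I|\le\delta n$.'' But $|E(F)|\ge C_1n$ gives $\frac{\delta}{C_1}|E(F)|\ge \delta n$. So from $|I|\le \frac{\delta}{C_1}|E(F)|$ you only get $|I|\le \frac{C_2}{C_1}\delta n$, as you had just derived. Components with more than $\delta n$ edges may therefore occur. For them only the weak case of~\eqref{triang_eq} holds, and they contribute no factor $n^{-\varepsilon}$.

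The paper's appendix makes the same move: ``$|I|\le\delta n\le\frac{\delta}{C_1}k$'' treats only part of the range that superspreadness with parameter $\delta/C_1$ requires. There are two ways to repair it.
\begin{itemize}
\item There are $b<|I|/(\delta n)$ such components, so the loss is at most $n^{\varepsilon b}\le\exp\bigl(\varepsilon|I|\ln n/(\delta n)\bigr)=e^{o(|I|)}$. The slack $8/e^{2}>1$ in the constant absorbs this for large $n$.
\item Alternatively, state the lemma with $\delta/C_2$ in place of $\delta/C_1$. Then $|I|\le\frac{\delta}{C_2}|E(F)|\le\delta n$ directly, and this is all Theorem~\ref{main_th} needs.
\end{itemize}
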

The proof of this lemma follows along similar lines as the proof in~\cite{f_cycles}. Therefore, we postpone it to Appendix~\ref{appendix}. We also stress that everywhere in the paper we apply this lemma with $q=3-\alpha$.

\begin{remark}
 \cite[Theorem 1.2]{zhuk} states the order of magnitude of $p_c$ for all $d$-regular graphs $F$ that satisfy a weaker version of~\eqref{triang_eq}, where the bound $|I|\leq\delta n$ is changed to $|I|=O(\log n)$. Nevertheless, the range of possible applications of Lemma~\ref{lm: q-spread property} is wider --- for instance, it can be applied for degenerate graphs.
\end{remark}

It then remains to derive the upper bound in Theorem~\ref{th:main} from Lemma~\ref{lm: q-spread property}. Assume $T=T(n,k)\in\mathcal{T}_{n,k}$ is chosen such that the $m$-uniform hypergraph $\mathcal{H}=\mathcal{H}(n, k)$ of all copies of $T$ satisfies the assumptions of Lemma~\ref{lm: q-spread property} with $q=3-\alpha$, and therefore it is superspread with parameters, as in the conclusion of the lemma. This implies $p_c(\mathcal{T}_{n,k})=O(n^{-1/q})$ by Theorem~\ref{main_th}, as needed.

Therefore, it remains to construct $T$ with a bounded maximum degree (we will show that it is possible with $d=7$) and satisfying~\eqref{triang_eq} with $q=3-\alpha$. Note that any $C\geq 3$ is sufficient for the remaining condition on the number of edges in the lemma. We split the proof into two cases --- $n\geq 2k$ and $n<2k$.

\subsection{\boldmath $n \geq 2k$}

\paragraph{Construction of $T(n, k)$, see Fig.~\ref{bign_examples}.} Let $n=ck+r$, where $c, r \in \mathbb{N}$, $c \geq 2$, and $0\leq r\leq k-1$. We start with a sequence of $c$ nested cycles $C_1,\ldots,C_{c-1},C^{\star}_c$ of size $k$ each, where the outer cycle $C_1$ corresponds to the boundary of the polygon. We enumerate their vertices as follows: let $C_i=(v_{i,1}v_{i,2}\ldots v_{i,k}v_{i,1})$, for $i\in[c-1]$, and let $C^{\star}_c=(v^{\star}_{c,1}v^{\star}_{c,2}\ldots v^{\star}_{c,k}v^{\star}_{c,1})$ be the last interior cycle.
Then we fill the space between each pair of two subsequent cycles $(C_i$, $C_{i+1})$, $i\in[c-2]$, and $(C_{c-1}, C_c^{\star})$, with an `even' triangulation by drawing edges $\{v_{i,j},v_{i+1,j}\},$ $\{v_{i,j},v_{i+1,j+1}\}$ for $i\in[c-2]$ and $\{v_{c-1,j},v^{\star}_{c,j}\},$ $\{v_{c-1,j},v^{\star}_{c,j+1}\}$, where $j\in[k]$ (the addition in indices is modulo $k$). We denote the obtained graph --- a prototype of the desired triangulation --- by $T^{\star}$. 

We have used $ck$ vertices so far, and now we insert $r$ final vertices on the interior $c$-th cycle; we call these vertices \textit{residual} and denote their set as $R$.  To do this, let us distinguish $r$ ``evenly distributed'' edges on $C^{\star}_c$ according to Claim~\ref{cl:1}. Subdivide each distinguished edge $\{v^{\star}_{c,j},v^{\star}_{c,j+1}\}$ by a residual vertex $u_j$, denote the new cycle obtained from $C^{\star}_c$ by $C_c$, and draw an edge from $u_j$ to $v_{c-1,j}$. Therefore, we have added $r$ vertices, and the space between $C_{c-1}$ and $C_c$ remains triangulated. For consistency, we now renumber the vertices of the extended $C_c=(v_{c,1}v_{c,2}\ldots v_{c,(k+r)}v_{c,1})$, respecting the order of $v^{\star}_{c,j}$.

Finally, we triangulate the internal cycle and get the final triangulation $T$. It is only essential to keep all the degrees bounded. For example, one could use a `comb' path $v_{c,1}v_{c,(k+r-1)}v_{c,2}v_{c,(k+r-2)}\ldots$, as in the square of a Hamilton cycle, see Fig.~\ref{comb_triang}. Depending on the parity of $k+r$, the last vertex of this path would be $v_{c,(k+r)/2+1}$ or $v_{c,(k+r-1)/2}$. We denote the set of edges of this triangulation (without the boundary cycle $C_c$) by $E_{int}(T)$.

\renewcommand{\figurename}{Fig.}
\begin{figure}[htbp]
\centering
	\subfigure[$n=14, k=7$]
	{
		\begin{minipage}{0.4\textwidth}
			\centering
			\begin{tikzpicture}
				\node[inner sep=0] at (0,0)
					{\includegraphics[width=\linewidth]{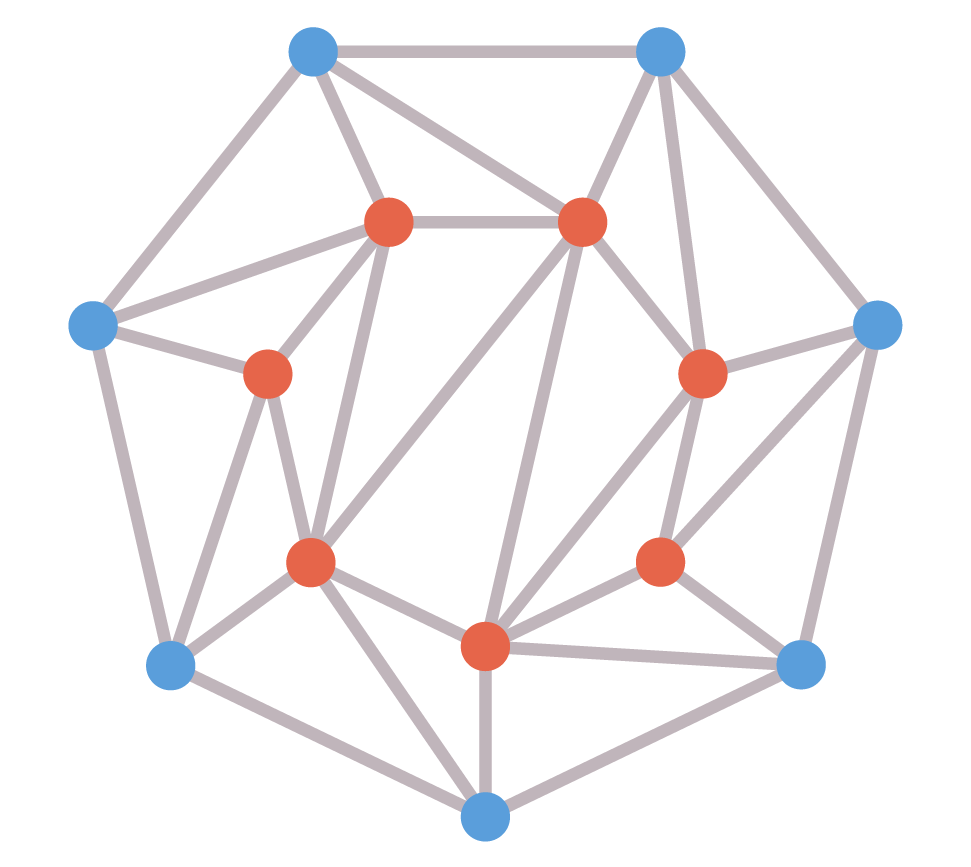}};
				\small
				\node[fill=none, inner sep=2pt] at (1.15,2.85) {$v_{1,2}$};
                \node[fill=none, inner sep=2pt] at (2.82,1.03) {$v_{1,3}$};
                \node[fill=none, inner sep=2pt] at (2.45,-1.85) {$v_{1,4}$};
                \node[fill=none, inner sep=2pt] at (0.56,-2.6) {$v_{1,5}$};
                \node[fill=none, inner sep=2pt] at (-2.4,-1.85) {$v_{1,6}$};
                \node[fill=none, inner sep=2pt] at (-2.85,1.03) {$v_{1,7}$};
                \node[fill=none, inner sep=2pt] at (-1.15,2.85) {$v_{1,1}$};

                \node[fill=none, inner sep=2pt] at (0.45,1.8) {$v_{2,2}$};
                \node[fill=none, inner sep=2pt] at (1.75,0.78) {$v_{2,3}$};
                \node[fill=none, inner sep=2pt] at (1.68,-0.85) {$v_{2,4}$};
                \node[fill=none, inner sep=2pt] at (0.42,-1.72) {$v_{2,5}$};
                \node[fill=none, inner sep=2pt] at (-1.25,-1.35) {$v_{2,6}$};
                \node[fill=none, inner sep=2pt] at (-1.95,0.25) {$v_{2,7}$};
                \node[fill=none, inner sep=2pt] at (-1.15,1.5) {$v_{2,1}$};
			\end{tikzpicture}
            \vspace{-0.2cm}
		\end{minipage}
	}
	\hspace{0.5cm}
	\subfigure[$n=14, k=4$]
	{
		\begin{minipage}{0.4\textwidth}
			\centering
			\begin{tikzpicture}
				\node[inner sep=0] at (0,0)
					{\includegraphics[width=\linewidth]{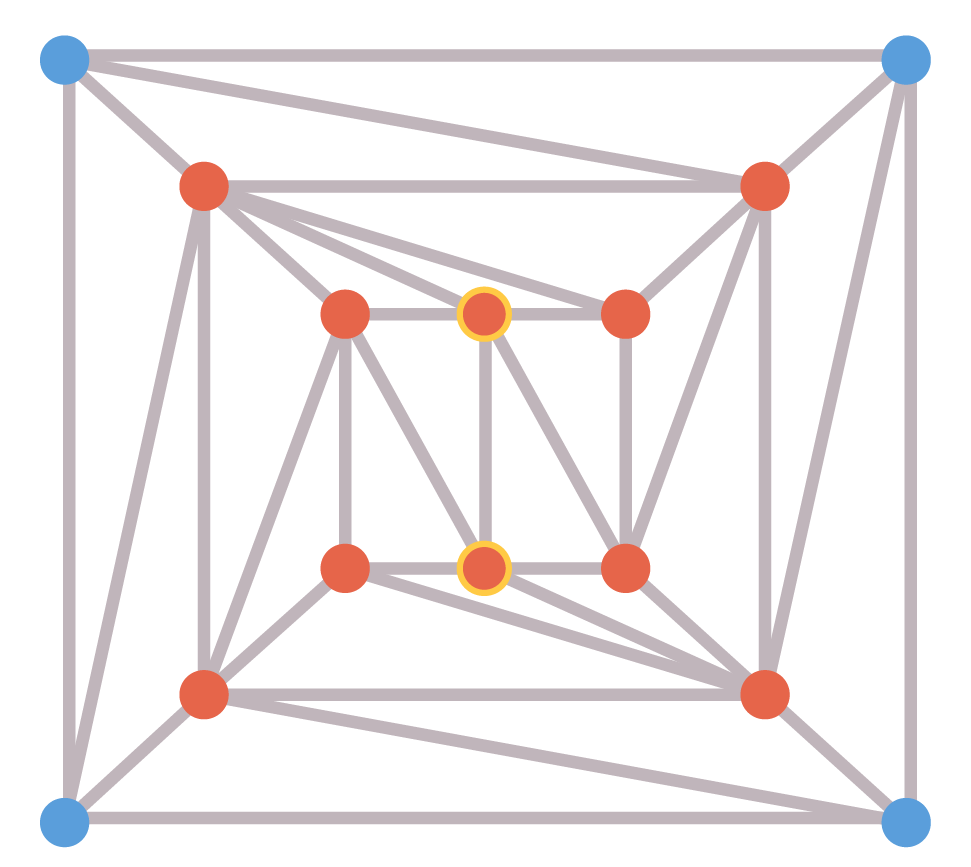}};
				\small
				\node[fill=none, inner sep=2pt] at (2.8,2.8) {$v_{1,2}$};
                \node[fill=none, inner sep=2pt] at (2.8,-2.95) {$v_{1,3}$};
                \node[fill=none, inner sep=2pt] at (-2.8,-2.95) {$v_{1,4}$};
                \node[fill=none, inner sep=2pt] at (-2.8,2.8) {$v_{1,1}$};
                \node[fill=none, inner sep=2pt] at (1.75,1.95) {$v_{2,2}$};
                \node[fill=none, inner sep=2pt] at (2.36,-1.75) {$v_{2,3}$};
                \node[fill=none, inner sep=2pt] at (-1.73,-2.1) {$v_{2,4}$};
                \node[fill=none, inner sep=2pt] at (-2.36,1.62) {$v_{2,1}$};
                \node[fill=none, inner sep=2pt] at (0.82,1.1) {$v_{3,3}$};
                \node[fill=none, inner sep=2pt] at (1.42,-0.95) {$v_{3,4}$};
                \node[fill=none, inner sep=2pt] at (-0.8,-1.25) {$v_{3,6}$};
                \node[fill=none, inner sep=2pt] at (-1.44,0.78) {$v_{3,1}$};
                \node[fill=none, inner sep=2pt] at (0.35,-0.63) {$v_{3,5}$};
                \node[fill=none, inner sep=2pt] at (-0.38,0.46) {$v_{3,2}$};
			\end{tikzpicture}
            \vspace{-0.5cm}
		\end{minipage}
	}
	\caption{Examples of triangulations for $n\geq 2k$: $k$-gone boundary is blue, internal vertices are red, and residual vertices are outlined in yellow.}
	\label{bign_examples}
\end{figure}

It is clear that the maximum degree of the constructed $(n, k)$-triangulation $T=T(n, k)$ is $\Delta(T) \leq 7$. The ``lattice structure'' of the constructed triangulation will be useful in the further proof.

\vspace{\baselineskip}

\paragraph{Checking condition~\eqref{triang_eq} for $T(n, k)$.} We first show that it suffices to prove the following isoperimetric inequality.

\begin{claim}
\label{cl:first_case}
    Suppose that for every simple cycle $C\subseteq T$ with length $\ell \leq k-1$ and $v$ vertices 'inside' $C$ (including vertices of $C$) the following inequality is true:
    \begin{equation}
    \label{triang_ineq3}
        \frac{\ell-1/3}{v} \geq \frac{k}{n} .
    \end{equation}
    Then $T$ satisfies the condition $(\ref{triang_eq})$ with $q=3-\frac{k}{n}$, $\varepsilon=\frac{1}{9}$, and $\delta=\frac{8}{9}$.
\end{claim}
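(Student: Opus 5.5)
The plan is to pass from a connected subgraph $I\subseteq T$ to its $2$-connected pieces, and then to bound the edges of each piece by Euler's formula in terms of the length of its outer boundary. Throughout, $I$ is identified with its edge set, so $|I|$ is the number of edges, and $q=3-\alpha$ with $\alpha=k/n$. Since $q\ge 2$, a single edge satisfies the strong bound: $v(I)=2\ge 1/q+1+\frac19$. A bridge, or a pendant tree edge, adds one vertex and one edge, and $1\ge 1/q$.

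\textbf{Reduction to blocks.} If $I=I_1\cup I_2$ with $I_1,I_2$ connected and sharing exactly one vertex, then $v(I)=v(I_1)+v(I_2)-1$.
\begin{itemize}
\item If $|I|\le \delta n$, then both $|I_j|\le\delta n$. Adding the strong inequalities for $I_1$ and $I_2$ gives the strong inequality for $I$, with an extra $1+\varepsilon$ to spare.
\item If $|I|>\delta n$, each $I_j$ satisfies at least the weak inequality. Summing gives $v(I)\ge |I|/q+1$.
\end{itemize}
Inducting on the block–cut tree, it therefore suffices to verify~\eqref{triang_eq} for $2$-connected $I$.

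\textbf{Euler bound.} Let $I$ be $2$-connected, with the embedding inherited from $T$. Its outer face is bounded by a simple cycle $C$ of length $\ell$, and every inner face has length at least $3$. Euler's formula then gives $2|I|\ge 3(f-1)+\ell$ with $f=|I|-v(I)+2$, that is,
$$|I|\le 3v(I)-3-\ell .$$
Hence the strong inequality $q v(I)-q(1+\varepsilon)\ge |I|$ follows as soon as
$$\ell\ \ge\ \alpha v(I)-\alpha+(3-\alpha)\varepsilon .$$
The weak inequality needs the same condition with $\varepsilon=0$.

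\textbf{Case analysis on $\ell$.}
\begin{itemize}
\item If $\ell\le k-1$, we apply~\eqref{triang_ineq3} to $C$. All vertices of $I$ lie inside $C$, so $v(I)\le v$, where $v$ counts the vertices of $T$ inside $C$ including those on $C$. Therefore $\alpha v(I)\le \alpha v\le \ell-1/3$. It remains to note that $1/3\ge (3-\alpha)/9-\alpha$, which holds for $\varepsilon=1/9$.
\item If $\ell\ge k$ and $|I|>\delta n$, we only need the weak bound. It holds because $\alpha v(I)\le \alpha n=k\le \ell$.
\item If $\ell\ge k$ and $|I|\le\delta n=\frac89 n$, we use that a $2$-connected graph has $v(I)\le |I|\le \frac89 n$. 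Hence $\alpha v(I)\le \frac89 k$, and we need $k/9\ge (3-\alpha)/9-\alpha$. This is true since $k\ge 3$.
\end{itemize}

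The only genuinely delicate point is the first bullet. The claim's hypothesis bounds the disk bounded by $C$, not $I$ itself, so one must make sure that comparing with $v\ge v(I)$ goes in the right direction. It does, because the Euler bound is written in terms of $v(I)$ and $\ell$ only, and the comparison only requires $v(I)\le v$. Everything else is routine bookkeeping with the margin $\varepsilon=\frac19$ and $\delta=\frac89$.
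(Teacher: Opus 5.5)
Your proof is correct and follows essentially the same route as the paper. You reduce to pieces whose outer boundary is a simple cycle $C$, use the Euler bound $|I|\le 3v(I)-3-\ell$, invoke~\eqref{triang_ineq3} when $\ell\le k-1$, and handle $\ell\ge k$ with trivial bounds together with $v(I)\le |I|$. The differences are cosmetic: you use the block--cut tree, treating bridges as $K_2$ blocks and tracking both regimes of~\eqref{triang_eq} explicitly, where the paper strips bridges and splits the outer walk into subcycles. For $\ell\ge k$ you split on $|I|$ versus $\delta n$, whereas the paper splits on $v$ versus $n-\frac{n}{3k}$.
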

    
\begin{proof}
    Note that, since $q>1$, it is enough to prove the condition (\ref{triang_eq}) for fragments $I\subseteq T$ without bridges. Indeed, if after removing a bridge between $I_1,I_2$ we are able to prove the inequality  separately for $I_1$ and $I_2$, then summing up the resulting inequalities we obtain the inequality for the initial subgraph with a bridge. Now let $C$ be the boundary of the outer face of a bridge-free connected subgraph $I$. We can also reduce it to the case when $C$ is a simple cycle. Indeed, consider the graph where vertices are subcycles of $C$ that are adjacent if the cycles have a common vertex. Since this graph is a tree, summing up the inequalities as in (\ref{triang_eq}), for every cycle, as above, we get the required inequality for $C$.

    Let $\ell$ be the length of $C$ and $v=v(I)$ be the total number of vertices in $I$. The number of edges in $I$ is at most the number of edges in a triangulation of the $\ell$-gon $C$ with the total number of vertices $v$: $|I| \leq 3v-3-\ell$. Therefore, the inequality
    \begin{equation}
    \label{eq:claim_iso_v-ell}
    v \geq \frac{3v-3-\ell}{3-\frac{k}{n}} + 1 + \frac{1}{9},
    \end{equation}
    implies the first inequality in (\ref{triang_eq}) with parameters $q,\varepsilon,\delta$ specified in the statement of the claim. The latter inequaility is equivalent to
    $$\frac{k}{n}v \leq \ell - \frac{1}{3} +\frac{10k}{9n}.$$
    Firstly, note that this inequality holds when $\ell \leq k-1$ due to the assumption (\ref{triang_ineq3}). Secondly, it is obvious that the inequality  (\ref{triang_ineq3}) holds for all simple cycles of length $\ell \geq k+1$ and for $\ell=k$ and $v\leq n-\frac{n}{3k}$. Therefore, we have proved (\ref{triang_eq}) except for the case when $v > n-\frac{n}{3k} \geq \frac{8}{9}n$ and $\ell=k$. Assume this last case. Since $I$ is connected and has a cycle, $|I|\geq v\geq \frac{8}{9}n$, as in the condition in the second case of~\eqref{triang_eq}. Since $\ell=k$, in the same way as in~\eqref{eq:claim_iso_v-ell}, it suffices to prove 
    $$v \geq \frac{3v-3-k}{3-\frac{k}{n}} + 1$$
    which is immediate since $v < n + 1$.
\end{proof}
   
Thus, it is enough to prove that there are no simple cycles that bound fragments with densities higher than the density of the entire triangulation. Let $C$ be a simple cycle in $T(n,k)$ of length $3 \leq \ell \leq k-1$ and with $v$ vertices `inside', including the vertices of $C$. Let us represent it as a (cyclic) sequence of consecutive arcs $C=A_1P_1A_2P_2\ldots A_tP_tA_1$, where 
 \begin{itemize}
 \item each $P_i$ is a non-empty subpath that lies entirely inside the (sub)triangulation bounded by $C_c$, i.e, $E(P_i) \subseteq E_{int}(T)$, and 
 \item $A_i$ are paths without edges in $E_{int}(T)$.
 \end{itemize}
 Each path $A_i$ begins and ends at a vertex of $C_c$. In the special case where the cycle $C$ does not intersect $E_{int}(T)$ we represent it as $A_1P_1A_1$, where $P_1 \subset C$ is some non-empty subpath that we specify below and $A_1 = C \backslash P_1$.

We denote by $M_i$ the planar subgraph of $T(n,k)$ which is separated by arc $A_i$ and internal cycle $C_c$ or the path $P_1$ in case when $C$ does not intersect internal triangulation ($M_i$ includes its boundary). Let $\ell_i$ be the length of arc $A_i$ and $v_i=v(M_i)$. Note that $v \leq \sum\limits_{i=1}^t v_i + \sum\limits_{i=1}^t (|P_i|-1)$ and $\ell \geq \sum\limits_{i=1}^t (\ell_i + |P_i|)$. We get
\begin{align*}
 \frac{\ell-1/3}{v}\geq
 \frac{\sum_{i=1}^t(\ell_i+|P_i|)-1/3}{\sum_{i=1}^t v_i + \sum_{i=1}^t (|P_i|-1)}&\geq
 \frac{\sum_{i=1}^t\ell_i+\sum_{i=1}^t (|P_i|-1)+2t/3}{\sum_{i=1}^t v_i + \sum_{i=1}^t (|P_i|-1)}\\
 &=
 \frac{\sum_{i=1}^t(\ell_i+2/3)+\sum_{i=1}^t (|P_i|-1)}{\sum_{i=1}^t v_i + \sum_{i=1}^t (|P_i|-1)}.
\end{align*}
Since $|P_i|\geq 1$, then, if $\frac{\sum_{i=1}^t(\ell_i+2/3)}{\sum_{i=1}^t v_i}\geq 1$, we get that
$$
\frac{\sum_{i=1}^t(\ell_i+2/3)+\sum_{i=1}^t (|P_i|-1)}{\sum_{i=1}^t v_i + \sum_{i=1}^t (|P_i|-1)}\geq 1>\frac{k}{n},
$$
as required. Otherwise,
$$
\frac{\sum_{i=1}^t(\ell_i+2/3)+\sum_{i=1}^t (|P_i|-1)}{\sum_{i=1}^t v_i + \sum_{i=1}^t (|P_i|-1)}\geq \frac{\sum_{i=1}^t(\ell_i+2/3)}{\sum_{i=1}^t v_i},
$$
and so it suffices to prove that, for all $i$,
\begin{equation}
\label{arc_ineq}
    \frac{\ell_i + \frac{2}{3}}{v_i} \geq \frac{k}{n}
\end{equation}
in order to get the required inequality (\ref{triang_ineq3}).

\vspace{\baselineskip}

Now let us introduce natural parameters $h$ and $w$ which represent `height' and `width' of the fragment $M_i$, respectively --- see Fig.~\ref{fig:new}. In order to define the `width' parameter, recall the definitions of $T^{\star}$ and $R$. We also denote $v_{ij}^{\star}=v_{ij}$ for any $i\in[k], j\in[c-1]$, for the sake of convenience of presentation. Let $Y$ be the set of `angle' coordinates for non-residual vertices in $M_i$, that is, 
$$Y = \{y: \exists z\in[c] \text{ such that } v^{\star}_{z,y} \in M_i \backslash R\}.$$
Let us observe that $Y$ forms a set of consecutive values modulo $k$, according to the definition of $T$. We set $w := |Y|-1$, that is the maximum difference in `angle' coordinates for vertices in $M_i$. Thus,
\begin{equation}
\label{eq:cycles_intersects}
    |C_j \cap M_i| \leq w+1 \text{ for any }j \in [c-1] \quad\quad \text{ and } \quad\quad |C_c^{\star} \cap M_i| \leq w+1.
\end{equation}
Note also that $w \leq k-2$ since $|Y| \leq \ell_i +1\leq \ell \leq  k-1$. This implies, in particular, that if some vertex $v \in M_i \cap C_j$ for some $j\in[c]$, then $V(A_i) \cap V(C_j) \neq \emptyset$. Indeed, otherwise $C_j$ lies entirely `inside' $C$ and then $|Y|=k$. Therefore, in what follows we only consider vertices that belong to cycles $C_j$, $j\in[c]$, which $A_i$ intersects and we denote by $h$ the number of such cycles. Note that, if $h=1$, the inequality (\ref{arc_ineq}) trivially holds since 
$$
 \frac{\ell_i+\frac{2}{3}}{v_i}=\frac{v_i-1+\frac{2}{3}}{v_i}\geq\frac{2}{3}>\frac{1}{2}\geq\frac{k}{n}.
$$
So we further assume $h \in [2, c]$. 

\vspace{\baselineskip}

Using these parameters, we are now able to bound the length $\ell_i$ of $A_i$ and the size $v_i$ of the fragment $M_i$. By our construction and Claim~\ref{cl:1}, we bound the number of residual vertices which lie in $M_i$ as
\begin{equation}
\label{eq: residual}
    |R \cap M_i| \leq (w+2) \frac{r}{k} + 1.
\end{equation}
In the last bound, we add 2 to $w$ since the two farthest intersections of $A_i$ with $C_c$ could be in residual vertices. Therefore, using~\eqref{eq:cycles_intersects}, \eqref{eq: residual}, we get
\begin{equation}
\label{total_bound}
    v_i = |M_i| = \sum_{j\in[c-1]}|C_j \cap M_i|+ |C_c^{\star} \cap M_i| + |R \cap M_i| \leq (w+1)h + (w+2) \frac{r}{k}+1.
\end{equation}

Now we  bound $\ell_i$. Let us consider two vertices $v, u \in M_i \backslash R$  with the largest difference $w$ in their `angular' coordinates. It is obvious that any pair of such $v, u$ lies on $A_i$.   Before moving on, we need to specify the choice of $P_1$ in the case when $C$ does not intersect $E_{int}(T)$. Let $M$ be the set of vertices lying `inside' the cycle $C$, and let us denote by $\mathcal{P}$ the set of all pairs $v,u \in M \backslash R$ whose `angular' coordinates differ by $w$. The only essential  property that we require is that the endpoints of $A_1$ (which are the endpoints of $P_1$ as well) have the same `radial' coordinate and that $A_1$ contains at least one pair of vertices from $\mathcal{P}$. For doing this, we choose the pair $(v, u) \in \mathcal{P}$ with the smallest difference in their `radial' coordinates. If $v$ and $u$ have the same `radial' coordinate, such a choice of $P_1$ is clear: we choose an arc between $u$ and $v$ on $C$ (so then the other arc is $A_1$). Otherwise, let $v=v^{\star}_{z, j}$ and suppose without loss of generality that $v$ has a larger `radial' coordinate than $u$. Note that due to the choice of $v$ and $u$ the vertex $v_{z-1, j}$ does not lie on $C$, so at least one of the two neighbors of $v$ on $C$ has its `radial' coordinate at least $z$. Walking along $C$ from $v$ towards this neighbour, we find a vertex $\tilde{v}$ that has the same `radial' coordinate $z$ as $v$. Let $P_1$ be the traversed arc of $C$ between $v$ and $\tilde{v}$ and let $A_1 = C \backslash P_1$. This choice satisfies our condition. Thus, whether $C$ intersects $E_{int}(T)$ or not, we have an arc $A_i$ such that its' endpoints have the same `radial' coordinate and it contain vertices $v, u$ with the `angle' difference $w$.

\renewcommand{\figurename}{Fig.}
\begin{figure}[htbp]
    \centering
    \begin{tikzpicture}
        \node[inner sep=0] at (0,0)
            {\includegraphics[width=0.7\textwidth]{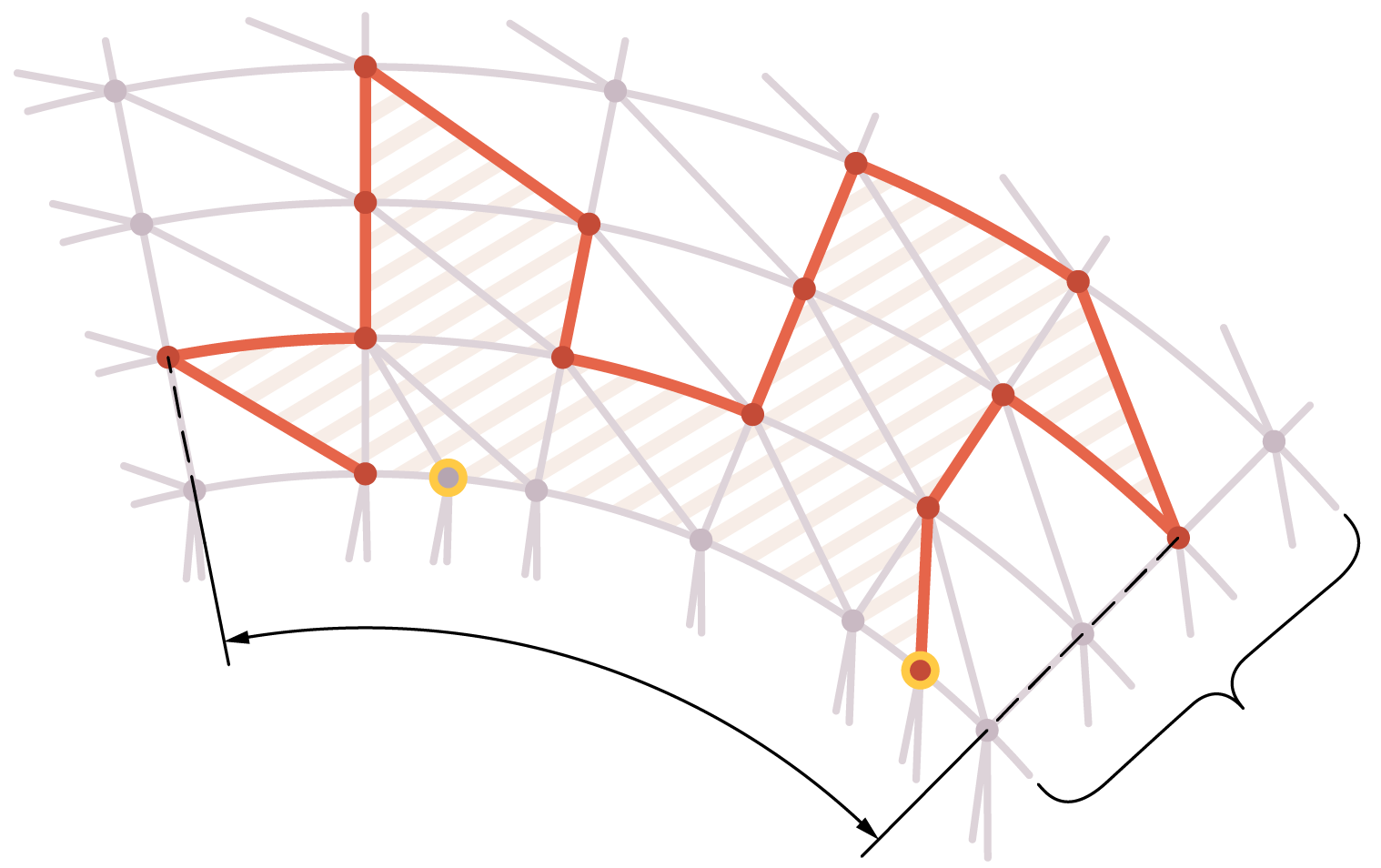}};
        \small
        \node[fill=none, inner sep=2pt, rotate=44] at (4.7,-2.4) {$h$ cycles};

        \node[fill=none, inner sep=2pt, rotate=-18] at (-0.75,-1.6) {$w$};

        \node[fill=none, inner sep=2pt, rotate=-18] at (-0.75,-1.6) {$w$};

        \node[fill=none, inner sep=2pt, rotate=5] at (-4.45,0.35) {$v$};

        \node[fill=none, inner sep=2pt, rotate=0] at (4.27,-0.84) {$u$};

        \node[fill=none, inner sep=2pt, rotate=5, text=schemecolor] at (-3.72,-0.2) {$C_c$};
        \node[fill=none, inner sep=2pt, rotate=5, text=schemecolor] at (-3.76,0.93) {$C_{c-1}$};
        \node[fill=none, inner sep=2pt, rotate=5, text=schemecolor] at (-3.95,2.0) {$C_{c-2}$};
        \node[fill=none, inner sep=2pt, rotate=5, text=schemecolor] at (-4.15,3.1) {$C_{c-3}$};
        
    \end{tikzpicture}
    \caption{Example of an arc $A_i$ (in red) and a subgraph $M_i$ (shaded): here $\ell_i=14, v_i=19$.}
    \label{fig:new}
\end{figure}

Getting back to $\ell_i$, we consider a subarc $B_i \subseteq A_i$ connecting $v$ and $u$. We find a set $D_i \subseteq E(B_i)$ of $w$ edges $\{v',u'\}$, where $v'$ is closer to $v$ on $D_i$ and the `angle' coordinate of $v'$ is less than the `angle' coordinate of $u'$, in the clockwise direction. There are $h-1$ pairs of consecutive cycles $C_j$ and $C_{j+1}$ that $A_i$ intersects. Since the endpoints of $A_i$ have the same `radial' coordinate, for each such pair $(C_j,C_{j+1})$, we are able to find a pair of closest (in $A_i$-distance) edges $e_1, e_2$ along the arc $A_i$ that connect vertices between $C_j$ and $C_{j+1}$. We note that at least one of the edges $e_1=\{v_1',u_1'\}, e_2=\{v_2',u_2'\}$ is not in $D_i$. Indeed, by the construction of $T$, the `angle' coordinate does not decrease when moving from $C_j$ to $C_{j+1}$. Therefore, assuming that the `angle' coordinates of $v'_1,v'_2$ are less than the `angle' coordinates of $u'_1,u'_2$ respectively and that $u'_1$ and $v'_2$ belong to $C_{j}$, we get that $v'_1$ cannot belong to $C_{j+1}$. So $A_i$ includes all edges of $D_i$ and also at least one extra edge for every pair $(C_j,C_{j+1})$. From this observation it immediately follows that
\begin{equation}
 \label{second_bound}
\ell_i \geq w + h-1.
\end{equation}

Combining \eqref{total_bound} and \eqref{second_bound}, we get that, in order to prove \eqref{arc_ineq}, it suffices to establish the following inequality:
$$
 \frac{w+h - \frac{1}{3}}{(w+1)h+(w+2) \frac{r}{k}+1} \geq \frac{k}{n} = \frac{1}{c + \frac{r}{k}} ,
$$
which is equivalent to
\begin{equation}
\label{eq:w-h-c}
\frac{w+1}{w+h-\frac{1}{3}}h + \frac{1}{w+h-\frac{1}{3}} \leq c + \frac{h-\frac{7}{3}}{w+h-\frac{1}{3}}\cdot \frac{r}{k}.
\end{equation}
If $h\leq c-1$, then, using the fact that $h\geq 2$, we get
$$
 \frac{h-\frac{7}{3}}{w+h-\frac{1}{3}}\cdot \frac{r}{k}\geq -\frac{1/3}{w+h-\frac{1}{3}},
$$
and therefore
$$
 \left(\frac{w+1}{w+h-\frac{1}{3}}h + \frac{1}{w+h-\frac{1}{3}}\right)-\left(c + \frac{h-\frac{7}{3}}{w+h-\frac{1}{3}}\cdot \frac{r}{k}\right)\leq
 -\frac{w}{w+h-\frac{1}{3}} - \frac{h-\frac{7}{3}}{w+h-\frac{1}{3}}\cdot \frac{r}{k}\leq 0.
$$
In the last case $h=c$,~\eqref{eq:w-h-c} transforms to
$$
 1 \leq \left(c - \frac{4}{3}\right)c + \left(c-\frac{7}{3}\right) \frac{r}{k},
$$
which also holds since $c \geq 2$ and $\frac{r}{k} < 1$. Thus, the proof in the case $n \geq 2k$ is completed.

\subsection{ \boldmath $n < 2k$}
\paragraph{Construction of $T(n, k)$, see Fig.~\ref{smalln_examples}.} Let $C_1=(v_1v_2\ldots v_kv_1)$ be the boundary of a $k$-gon. We put all internal $s=n-k < k$ vertices on an internal cycle $C_2 = (u_1u_2\ldots u_su_1)$ (if $s=1$ it is a single vertex, if $s=2$ it is an edge). Then, we `evenly' divide the boundary cycle $C_1$ into $s$ groups of consecutive vertices $V_1, V_2, \ldots, V_s$. We do this by choosing separating edges between sets $V_i$ according to Claim~\ref{cl:1}, to ensure that every arc $A\subset C_1$ of length $\ell$ contains at most $\ell\frac{s}{k}+1$ separating edges. Let us denote $v'_i \in V_{i}$ the first clockwise vertex of $V_i$. Next, we connect each vertex $u_i \in C_2$ with all the vertices in the set $V_i$ and additionally with the vertex $v'_{i+1}$ (the addition in indices is modulo $s$).
Finally, we triangulate the inner cycle $C_2$ so that the bounded degree condition is satisfied --- for instance, using a 'comb' path as in Fig.~\ref{comb_triang}.

\renewcommand{\figurename}{Fig.}
\begin{figure}[htbp]
    \centering
    \subfigure[$n=11, k=9$]
    {
        \begin{minipage}{0.4\textwidth}
            \centering
            \begin{tikzpicture}
                \node[inner sep=0] at (0,0)
                    {\includegraphics[width=\linewidth]{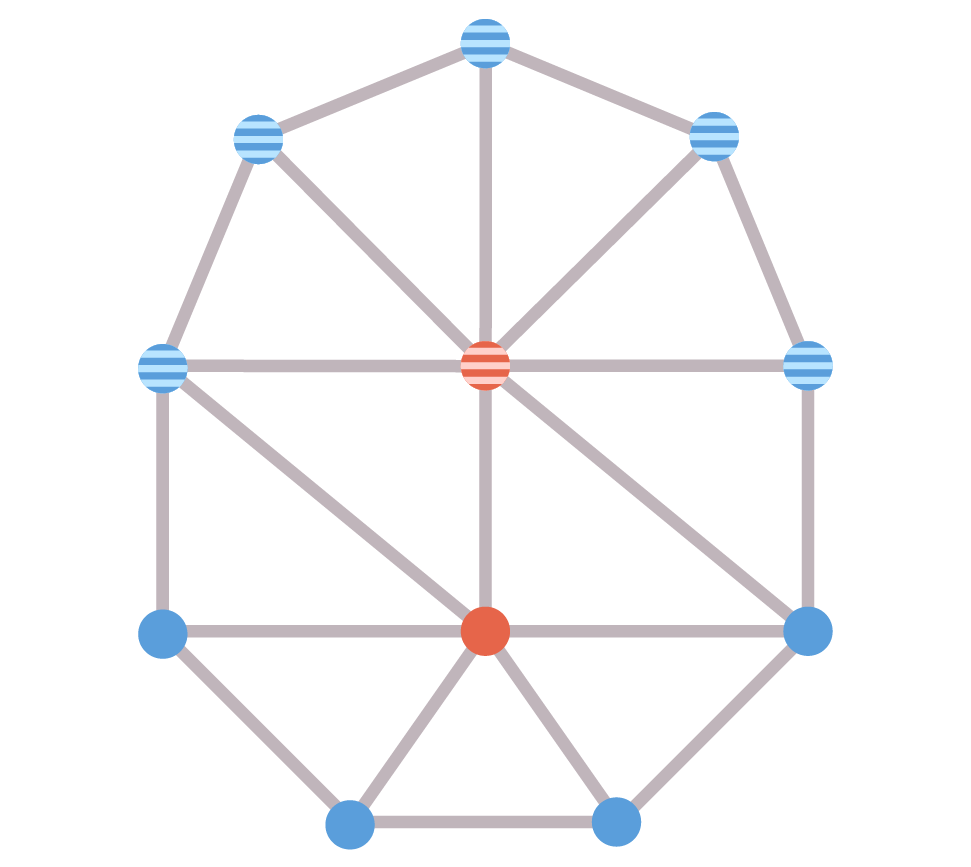}};
                \small
                \node[fill=none, inner sep=2pt] at (1.52,2.32) {$v_{4}$};
                \node[fill=none, inner sep=2pt] at (2.32,0.75) {$v_{5}$};
                \node[fill=none, inner sep=2pt] at (2.32,-1.65) {$v_{6}$};
                \node[fill=none, inner sep=2pt] at (1.32,-2.59) {$v_{7}$};
                \node[fill=none, inner sep=2pt] at (-1.3,-2.59) {$v_{8}$};
                \node[fill=none, inner sep=2pt] at (-2.3,-1.65) {$v_{9}$};
                \node[fill=none, inner sep=2pt] at (-2.3,0.75) {$v_{1}$};
                \node[fill=none, inner sep=2pt] at (-1.5,2.32) {$v_{2}$};
                \node[fill=none, inner sep=2pt] at (0,2.9) {$v_{3}$};
                \node[fill=none, inner sep=2pt] at (-0.3,0.15) {$u_{1}$};
                \node[fill=none, inner sep=2pt] at (0.32,-1.01) {$u_{2}$};
            \end{tikzpicture}
            \vspace{0cm}
        \end{minipage}
    }
    \hspace{0.5cm}
    \subfigure[$n=14, k=10$]
    {
        \begin{minipage}{0.4\textwidth}
            \centering
            \begin{tikzpicture}
                \node[inner sep=0] at (0,0)
                    {\includegraphics[width=\linewidth]{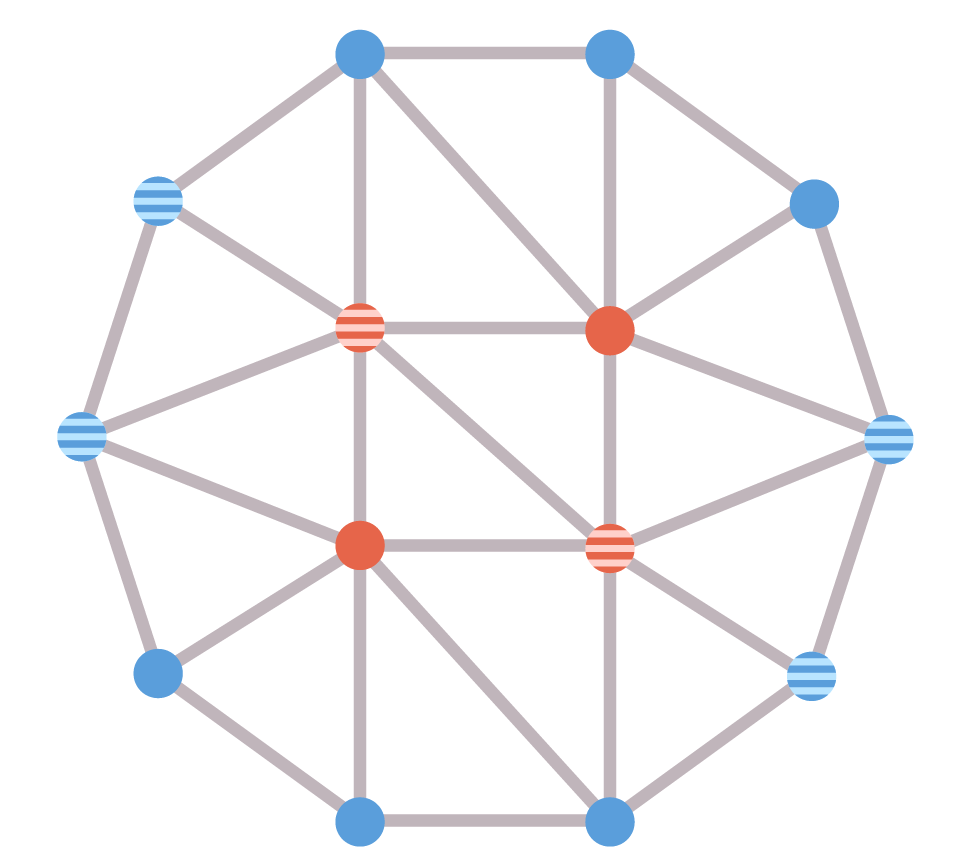}};
                \small
                 \node[fill=none, inner sep=2pt] at (0.85,2.85) {$v_{4}$};
                 \node[fill=none, inner sep=2pt] at (2.18,1.9) {$v_{5}$};
                 \node[fill=none, inner sep=2pt] at (2.9,0.25) {$v_{6}$};
                 \node[fill=none, inner sep=2pt] at (2.18,-1.97) {$v_{7}$};
                 \node[fill=none, inner sep=2pt] at (1.26,-2.6) {$v_{8}$};
                 \node[fill=none, inner sep=2pt] at (-1.26,-2.6) {$v_{9}$};
                 \node[fill=none, inner sep=2pt] at (-2.18,-1.97) {$v_{10}$};
                 \node[fill=none, inner sep=2pt] at (-2.9,0.25) {$v_{1}$};
                 \node[fill=none, inner sep=2pt] at (-2.18,1.9) {$v_{2}$};
                 \node[fill=none, inner sep=2pt] at (-0.85,2.85) {$v_{3}$};
                 \node[fill=none, inner sep=2pt] at (0.5,0.45) {$u_{2}$};
                 \node[fill=none, inner sep=2pt] at (0.5,-0.97) {$u_{3}$};
                 \node[fill=none, inner sep=2pt] at (-0.5,0.92) {$u_{1}$};
                 \node[fill=none, inner sep=2pt] at (-0.5,-0.47) {$u_{4}$};
            \end{tikzpicture}
            \vspace{-0.1cm}
        \end{minipage}
    }
    \caption{Examples of triangulation for $n<2k$: internal vertex and the corresponding $V_i$ are highlighted using the same fill pattern.}
    \label{smalln_examples}
\end{figure}

Note that since $s=\Omega(n)$, we have that $\frac{k}{n} \leq 1 -\varepsilon$ for some fixed $0<\varepsilon<1$. Therefore, the maximum degree of the constructed $(n, k)$-triangulation $T=T(n,k)$ is $\Delta(T) \leq 5+\lceil\frac{k}{n-k}\rceil < 6+\frac{1-\varepsilon}{\varepsilon}$.

\paragraph{Checking condition~\eqref{triang_eq} for $T(n, k)$.}
The proof is based on the observation that subsets of $C_2$ have large neighbourhoods:

\begin{claim}
\label{cl:2case}
 Let $A \subset C_2$ be an arc with $w>0$ vertices. The number of vertices on the external boundary $C_1$ that are adjacent to $A$ is at least $w \frac{k}{s}.$

\end{claim}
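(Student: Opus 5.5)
The plan is to identify the neighbourhood of $A$ on $C_1$ explicitly as an arc of $C_1$. Then I bound its length using the explicit choice of separating edges behind Claim~\ref{cl:1}, rather than only its stated "$+1$" guarantee.

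\textbf{Step 1: the neighbourhood is an arc.} Let $A=(u_iu_{i+1}\ldots u_{i+w-1})$, with indices modulo $s$. If $w=s$, every vertex of $C_1$ lies in some $V_j$ and is adjacent to $u_j$. So $A$ has $k = w\frac{k}{s}$ neighbours on $C_1$, and we are done. Assume now that $w<s$. By construction, $u_j$ is adjacent to all of $V_j$ and to $v'_{j+1}$. Hence the set of $C_1$-neighbours of $A$ contains
$$V_i\cup V_{i+1}\cup\ldots\cup V_{i+w-1}\cup\{v'_{i+w}\}.$$
Since the groups are consecutive along $C_1$, this set is exactly the vertex set of the arc $B\subset C_1$ running clockwise from $v'_i$ to $v'_{i+w}$. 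It therefore suffices to show that $B$ has $|E(B)|+1\geq w\frac{k}{s}$ vertices.

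\textbf{Step 2: length of $B$.} The arc $B$ begins right after the separating edge between $V_{i-1}$ and $V_i$. It ends right after the separating edge between $V_{i+w-1}$ and $V_{i+w}$. So its length equals the difference in position along $C_1$ between two separating edges whose indices differ by $w$.

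Using only the inequality of Claim~\ref{cl:1} gives $w\leq |E(B)|\frac{s}{k}+1$. This yields just $|E(B)|+1\geq (w-1)\frac{k}{s}+1$, which falls short when $k/s>1$. This is the main obstacle, and I handle it by using the explicit choice stated after Claim~\ref{cl:1}. The separating edges are the edges at positions $\lfloor jk/s\rfloor$, $j=1,\ldots,s$, taken cyclically along $C_1$. When $s\mid k$ they are exactly equally spaced, with gaps $k/s$. The arc $B$ spans from the $(i-1)$-st to the $(i+w-1)$-st separating edge, possibly with wrap-around, which adds exactly $k$ to both positions and changes nothing. Hence
$$|E(B)| = \left\lfloor \tfrac{(i+w-1)k}{s}\right\rfloor-\left\lfloor \tfrac{(i-1)k}{s}\right\rfloor \geq \tfrac{wk}{s}-1 ,$$
since the difference of floors of two reals differing by $wk/s$ is greater than $wk/s-1$. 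Therefore $B$ has at least $\frac{wk}{s}$ vertices.

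\textbf{Step 3: bookkeeping and the edge case.} The remaining work is checking the indexing convention: that $v'_j$ is indeed the vertex immediately after the $(j-1)$-st separating edge, so that the arc endpoints match the floor positions exactly. The degenerate case $s=1$ is covered by $w=s$. Once the explicit spacing $\lfloor jk/s\rfloor$ is used, I expect the rest to be routine.
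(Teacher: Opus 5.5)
Your proof is correct, and its setup matches the paper's: the $C_1$-neighbourhood of $A$ contains $V_i\cup\dots\cup V_{i+w-1}\cup\{v'_{i+w}\}$, which is an arc of $C_1$. The two arguments part ways at the step you call the main obstacle.

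That obstacle is not real. It comes from applying Claim~\ref{cl:1} to the arc $B$ from $v'_i$ to $v'_{i+w}$, which omits its leading separating edge. The paper instead extends $B$ backwards by one edge, starting at the vertex immediately preceding $v'_i$. This extended arc has $|E(B)|+1$ edges, which equals the number of neighbours of $A$ on $C_1$. It also contains $w+1$ separating edges: the one just before $v'_i$, and the ones just before $v'_{i+1},\dots,v'_{i+w}$. Claim~\ref{cl:1} then gives $w+1\le (|E(B)|+1)\frac{s}{k}+1$, that is, $|E(B)|+1\ge w\frac{k}{s}$ exactly. So the ``$+1$'' loss you saw is an artifact of the choice of arc, not a weakness of Claim~\ref{cl:1}.

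Your substitute is also valid: you compute $\lfloor (i+w-1)k/s\rfloor-\lfloor (i-1)k/s\rfloor> wk/s-1$ directly from the explicit spacing. Wrap-around and the cyclic relabelling of the groups are harmless, as you note. The two approaches trade off as follows.
\begin{itemize}
\item Yours is self-contained and does not depend on Claim~\ref{cl:1}, whose proof the paper only sketches.
\item Yours proves the claim only for the particular separating set at positions $\lfloor jk/s\rfloor$. The paper's argument works for any separating set with the property in Claim~\ref{cl:1}, which is how the construction of $T$ is actually phrased.
\item Since the paper itself proposes the floor choice to realise Claim~\ref{cl:1}, restricting to it costs nothing for the main theorem.
\end{itemize}
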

\begin{proof}
    The statement trivially holds if $A=C_2$, so we further assume that $w<s$. Let $A=u_iu_{i+1}\ldots u_{i+w-1}$ (the addition in indices is modulo $s$) and let $A'=v'_i\ldots v'_{i+w}$ be the arc consisting of neighbours of $A$ in $C_1$. Without loss of generality, we let $v_2=v'_i$ and $v_{\ell}=v'_{i+w}$, for some $\ell$.  Since $w<s$, the number of neighbors of $A$ on $C_1$ is $\ell-1=\left(\sum\limits_{j=i}^{i+w-1} |V_j|\right) + 1$. We shall prove that this quantity is at least $w\frac{k}{s}$. Assume the opposite: $\sum\limits_{j=i}^{i+w-1} |V_j| < w \frac{k}{s} - 1$. This means that the arc $v_1A'$ has length $\ell-1< w \frac{k}{s}$ and contains $w+1$ separating edges, where the first separating edge is $\{v_1,v_2\}$ and the last separating edge is $\{v_{\ell-1},v_{\ell}\}$ (recall that the vertex $v_{\ell}=v'_{i+w}$ is the first vertex of $V_{i+w}$, which $u_{i+w-1}$ is adjacent to). This contradicts Claim~\ref{cl:1}.
    
\end{proof}

As in the case of $n\geq 2k$, in order to complete the proof of the upper bound when $n<2k$, it is enough to prove the following isoperimetric inequality.
\begin{claim}
\label{cl:2nd_case}
    Suppose that for every simple cycle $C\subseteq T$ of length $\ell \leq k-1$ and with $t>0$ vertices strictly 'inside' $C$ the following inequality is true:
    \begin{equation}
    \label{triang_ineq2}
        \frac{\ell-1}{t} \geq \frac{k}{s} .
    \end{equation}
    Then $T$ satisfies the condition~\eqref{triang_eq} with $q=3-\frac{k}{n}$, $\varepsilon=\frac{2}{5}$, and $\delta=\frac{1}{2}$.
\end{claim}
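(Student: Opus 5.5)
I will mirror the proof of Claim~\ref{cl:first_case}. As there, $q=3-\frac{k}{n}>1$, so it suffices to verify~\eqref{triang_eq} for connected bridgeless fragments $I$. Moreover, the outer boundary of such an $I$ decomposes into simple cycles glued in a tree-like fashion, and the inequalities for the pieces add up. So we may assume the outer face of $I$ is bounded by a simple cycle $C$ of length $\ell$. Let $t$ be the number of vertices strictly inside $C$, so $v=v(I)\le \ell+t$. Since $I$ sits inside a triangulation of the $\ell$-gon $C$ on $\ell+t$ vertices, we get $|I|\le 3(\ell+t)-3-\ell$. Also, adding interior vertices of $C$ to $I$ increases $v$ by one and the edge bound by three, and $3>q$ means this only makes the target inequality harder. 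Hence it suffices to prove
$$
\ell+t\;\ge\;\frac{2\ell+3t-3}{3-\frac{k}{n}}+1+\varepsilon
$$
with $\varepsilon=\frac25$, and without $\varepsilon$ when $|I|>\delta n$.

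Multiplying out by $3-\frac{k}{n}$, the inequality becomes
$$
(\ell+t)\Bigl(3-\tfrac{k}{n}\Bigr)\ge 2\ell+3t-3+(1+\varepsilon)\Bigl(3-\tfrac kn\Bigr).
$$
Using $n=k+s$ and some algebra, this should reduce to a condition of the shape
$$
\ell-1-\tfrac{(1+\varepsilon)(3-\alpha)-3}{\,\cdot\,}\;\ge\;\frac{k}{s}\,t
$$
up to harmless constants. Concretely, the target reads $\ell\,\frac{s}{n}-t\,\frac kn\ge \varepsilon(3-\alpha)-\alpha$. Dividing by $\frac{s}{n}$ gives
$$
\ell-\frac{k}{s}t\;\ge\;\frac{\varepsilon(3-\alpha)-\alpha}{1-\alpha}.
$$
From~\eqref{triang_ineq2} we have $\ell-\frac ks t\ge 1$, and $\varepsilon(3-\alpha)-\alpha\le 1-\alpha$ holds exactly when $\varepsilon(3-\alpha)\le 1$, which is true for $\varepsilon=\frac25<\frac13\cdot\frac{5}{4}\cdot\ldots$; since $3-\alpha\le 3$ we need $\varepsilon\le\frac13$. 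If this fails for $\varepsilon=\frac25$, I would instead use $\alpha>\frac12$ (because $n<2k$). Then $3-\alpha<\frac52$, so $\varepsilon(3-\alpha)<1$, and the case $\ell\le k-1$, $t>0$ is settled. When $t=0$, $I$ is an outerplanar triangulated polygon with $v=\ell$ and $|I|\le 2\ell-3$, and the inequality $\ell\ge\frac{2\ell-3}{3-\alpha}+\frac75$ is checked directly, using $3-\alpha\ge2$ and $\ell\ge3$.

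It remains to handle $\ell\ge k$. For $\ell\ge k+1$ the left side $\ell\frac sn$ grows while $t\le n-\ell$, so the inequality should be immediate. For $\ell=k$, the only cycle of length $k$ enclosing many vertices is essentially $C_1$, and then $t\le s$. The inequality holds with the $\varepsilon$-margin unless $t$ is close to $s$, that is, $v>n-O(1)\cdot\frac{n}{k}$. In that case $|I|\ge v\ge\frac n2=\delta n$, so only the version without $\varepsilon$ is required, and it follows from $v\le n$ exactly as at the end of the proof of Claim~\ref{cl:first_case}. I expect the main obstacle to be the bookkeeping of this boundary regime: choosing the threshold on $v$ so that $\delta=\frac12$ works while the $\varepsilon=\frac25$ margin still holds below it, and making sure that the reduction to simple cycles really covers fragments whose outer boundary is $C_1$.
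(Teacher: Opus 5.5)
Your proposal is correct and follows essentially the same route as the paper's proof in Appendix~\ref{appendix_b}. Both reduce to a simple outer cycle, use $|I|\le 3v-3-\ell$, and rewrite the target as $\ell-\frac{k}{s}t\ge\frac{\varepsilon(3-\alpha)-\alpha}{1-\alpha}$, which follows from \eqref{triang_ineq2} once $\varepsilon(3-\alpha)\le 1$. The paper simply takes $\varepsilon=\frac{1}{3-\alpha}=\frac{s+k}{2k+3s}>\frac25$, which is your $\alpha>\frac12$ observation, so you should delete the aborted $\varepsilon\le\frac13$ detour.

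The boundary regime you flag is not actually an obstacle. Whenever $\ell\ge k$ you have $|I|\ge\ell\ge k>\frac n2=\delta n$, so only the $\varepsilon$-free inequality in \eqref{triang_eq} is needed there, and it holds for every $v\le n$.
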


The proof of this claim is verbatim to the proof of Claim~\ref{cl:first_case}. Nevertheless, for the sake of completeness, we present the proof in Appendix~\ref{appendix_b}.

It remains to check the condition of the claim. Let us fix a simple cycle $C \subseteq T$ of length $\ell \leq k-1$. Obviously, all $t$ vertices lying strictly `inside' $C$ belong to the cycle $C_2$. Since $\ell < k$ we have that $|C \cap C_2| \geq 1$. The $t$ inner vertices split into subsets $U_1, U_2, \ldots, U_r$ that form inclusion-maximal arcs of $C_2$. We have $\sum\limits_{i=1}^r |U_i| = t$. The key observation is that, for every $i\in[r]$, the set of neighbours $N(U_i)\cap C_1$ of $U_i$ on $C_1$ belongs to $C$. From Claim~\ref{cl:2case}, we get that $|N(U_i)\cap C_1| \geq |U_i| \frac{k}{s}$ for every $i \in [r]$. Moreover, from the construction of $T$ and since $s<k$, the sets $N(U_i)\cap C_1$ do not overlap. Thus:
$$
 \frac{\ell-1}{t} \geq \frac{\sum\limits_{i=1}^r |N(U_i)\cap C_1| + |C \cap C_2| - 1}{\sum\limits_{i=1}^r |U_i|} \geq \frac{\sum\limits_{i=1}^r |U_i| \frac{k}{s}}{\sum\limits_{i=1}^r |U_i|} \geq \frac{k}{s}.
$$
Thus, we complete the proof in the case $n < 2k$. The next two sections cover the remaining case where almost all vertices belong to the boundary: $s(n)=o(n)$.

%% file: the_whole_border.tex
\section{Upper bound: \boldmath $s(n) = O(n^{\varepsilon})$}
\label{small_s}
Let $s(n) \leq n^{\varepsilon}$ for some constant $\varepsilon \in (0,1)$. In this section we again derive the upper bound in Theorem~\ref{th:main} by applying Theorem~\ref{main_th} to a fixed triangulation $T=T(n,k) \in \mathcal{T}_{n,k}$ with a bounded maximum degree $\Delta(T)\leq 5$. To do this, we show directly that the hypergraph $\mathcal{H}=\mathcal{H}(T)$ of all copies of $T$ is $(q=C'n^{-1/2}, \beta, \delta)$-superspread with some $C', \beta, \delta > 0$. This implies the desired upper bound $p_c\leq Cq \leq C C' n^{-1/(2+s)}$. 

\paragraph{Construction of $T(n,k)$, see Fig~\ref{big k}.} We start with a triangulation of a $k$-gon by a 'comb' path as in the square of a Hamilton
cycle (see Fig.~\ref{comb_triang}). Then we put the remaining $s$ vertices `evenly' into the $k-2$ triangular faces, splitting each chosen triangle into three. Let us denote by $S=S(T)$ the set of these $s$ internal vertices. We denote the obtained triangulation by $T:=T(n,k)$. Clearly, $\Delta(T)\leq 5$. We also bound the number of internal edges between any pair of vertices from $S$ from below by 
\begin{equation}
\label{border_length}
    B(T) := \left\lfloor\frac{k-2}{s}\right\rfloor,
\end{equation}
as on Fig.~\ref{big k}.

\renewcommand{\figurename}{Fig.}
\begin{figure}[htbp]
    \centering
    \begin{minipage}{0.8\textwidth}
        \centering
        \includegraphics[width=\linewidth]{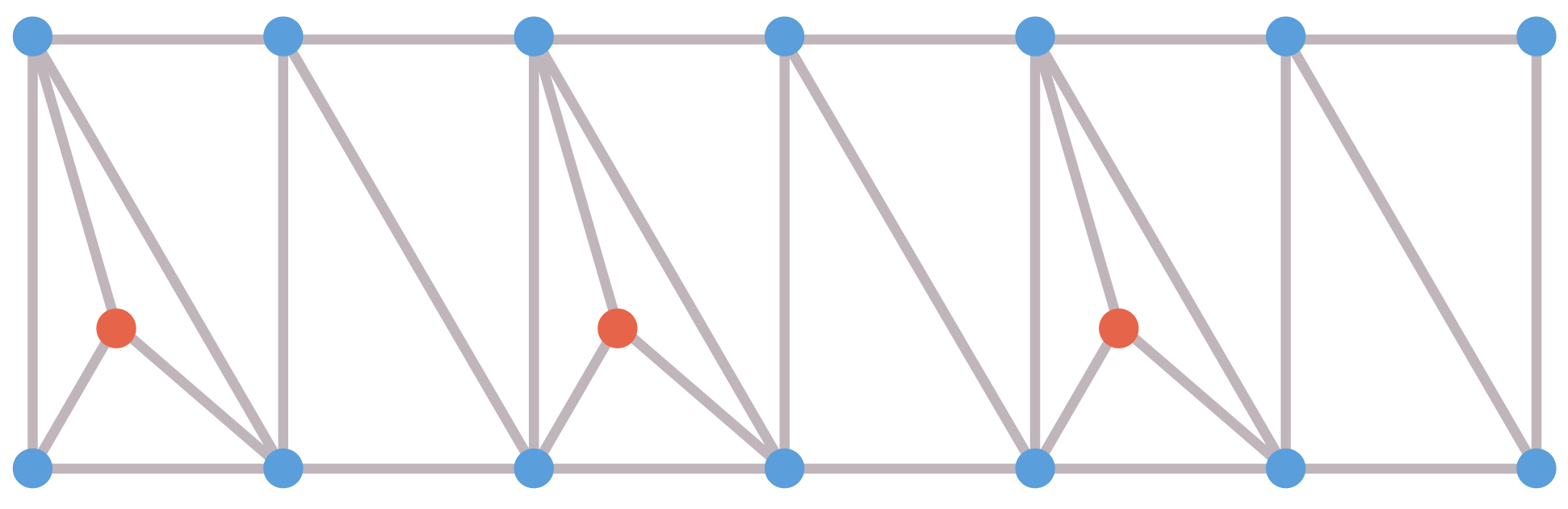} 
    \end{minipage}
    \caption{Example of a triangulation for $n=17$, $k=14$: internal $s=3$ vertices are in red and $B(T)=4$.}
    \label{big k}
\end{figure}

\paragraph{Checking the superspreadness condition.}
Lemma~\ref{lm: q-spread property} is not applicable because of the presence of dense subgraphs --- namely, 4-cliques. Nevertheless, we are able to overcome this issue since these cliques are very far from each other. 

For a subgraph $I \subset T$, denote by $r(I)$ the number of 4-cliques in $I$. The following holds:
\begin{claim}
    For any non-empty subgraph $I\subset T$ with $c$ components

\begin{equation}
\label{general ineq}
    |I| \leq 2v(I) + r(I) - 3c.
\end{equation}
\end{claim}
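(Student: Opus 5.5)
First reduce to connected $I$. Both sides of~\eqref{general ineq} are additive over components: $|I|$, $v(I)$ and $r(I)$ all split as sums over the components, and $-3c$ splits as $-3$ per component. So it suffices to prove $|I|\le 2v(I)+r(I)-3$ for every connected non-empty $I\subset T$. Isolated vertices would violate this, since they give $0\le -1$. I will therefore read ``subgraph'' as an edge set, with $V(I)$ the set of endpoints, as everywhere else in the paper. Then every component has at least one edge.

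The structure of $T$ is the key. Let $T_0$ be the comb triangulation of the $k$-gon, i.e.\ the square of a Hamilton path on the $k$ boundary vertices. Then $T$ is obtained from $T_0$ by inserting $s$ degree-$3$ vertices into distinct faces of $T_0$. For $T_0$, every subgraph $J$ with $v(J)\ge 2$ spanning a connected graph satisfies $|J|\le 2v(J)-3$. This holds because $T_0$ is a $2$-tree, obtained by repeatedly adding a vertex adjacent to an existing edge. Formally, order the vertices of $V(J)$ along the Hamilton path. The last vertex has at most $2$ neighbours in $J$ among earlier vertices, so deleting it and inducting gives the bound.

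This is the step that needs care, since deletion can disconnect $J$. I would avoid needing connectivity by proving the stronger statement $|J|\le 2v(J)-3c(J)$ for all edge sets $J$ of $T_0$, by induction on the number of edges. Deleting the last vertex $x$ (in path order) removes $d\le2$ edges. If $d=1$, $x$ was a leaf: $v$ drops by one and $c$ is unchanged. If $d=2$, the component count rises by at most one, so $2v-3c$ drops by at most $2+3-3=2$; this needs a small check. Cleaner is the matroid view: $T_0$ is a generically rigid, $(2,3)$-sparse graph because it is built by Henneberg type-I moves, and $(2,3)$-sparsity is inherited by subgraphs. Sparsity applied per component gives $|J|\le 2v(J)-3c(J)$ directly.

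Now handle the inserted vertices. Each $x\in S$ has degree $3$ in $T$, with neighbours forming a triangle of $T_0$. Take connected $I$ and let $S_I=S\cap V(I)$. For $x\in S_I$ let $d_x\in\{1,2,3\}$ be its degree in $I$. Let $J=I\setminus S_I$ be the part inside $T_0$, with $c(J)$ components. Then $|I|=|J|+\sum d_x$ and $v(I)=v(J)+|S_I|$ (vertices of $I$ are endpoints, and every neighbour of $x$ lies in $T_0$). Using $|J|\le 2v(J)-3c(J)$, we need
\[
\sum_x d_x\le 2|S_I|+r(I)+3c(J)-3 .
\]
Reconnecting the $c(J)$ components through the vertices of $S_I$ is what makes $I$ connected. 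A vertex $x$ with degree $d_x$ can merge at most $d_x-1$ components. Connectivity of $I$ therefore gives $c(J)-1\le\sum_x(d_x-1)$, counting $J$ as one component if it is empty.

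Everything rests on a per-vertex inequality: for every $x$, $d_x\le 2+[\text{$x$ lies in a $K_4$ of $I$}]+3\cdot(\text{its share of } c(J)-1)$. Suppose $d_x\le2$. Then the contribution is fine. Suppose $d_x=3$. Then either all three triangle edges are present in $I$, giving a $K_4$ and contributing to $r(I)$, or some triangle edge is missing. In the latter case I would argue that $x$ contributes excess to $2v(J)-3c(J)-|J|$, meaning the slack in $J$ is at least $1$. I expect this last point to be the main obstacle. The bookkeeping should follow from sparsity: adding the missing triangle edge to $J$ keeps the graph $(2,3)$-sparse, since it stays inside $T_0$. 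So the slack is at least $1$ for each missing edge, and the triangles of distinct $x$ are distinct faces, so these added edges are distinct, except where faces share edges. The sharing case needs the evenness spacing $B(T)\ge1$ (or a direct check). I would first try to organise the whole argument by adding, for each $x$ with $d_x=3$ and no $K_4$, its missing triangle edges to $J$, and then applying sparsity of $T_0$ once.
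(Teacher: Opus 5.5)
\textbf{Verdict: there is a genuine gap.} The reduction you use for the inserted vertices reduces the claim to an inequality that is false, and the decisive case is never proved.

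\textbf{The bookkeeping error.} With the edge-set convention you adopt, the identity $v(I)=v(J)+|S_I|$ is false. A vertex of $T_0$ all of whose $I$-edges go to $S$ lies in $V(I)$ but not in $V(J)$. Take $x\in S$ in the face $w_1w_2w_3$ and $I=\{xw_1,xw_2,xw_3,w_1w_2\}$. Then $J=\{w_1w_2\}$, and the inequality you reduce to, $\sum_x d_x\le 2|S_I|+r(I)+3c(J)-3$, reads $3\le 2$. Yet the claim itself holds there ($4\le 5$). If you keep such vertices in $J$ as isolated vertices, the identity is restored, but $|J|\le 2v(J)-3c(J)$ fails, since an isolated vertex contributes $0\le -1$. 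The convention $c(\emptyset)=1$ breaks it in the same way. So no per-vertex accounting can prove your target inequality (and the ``share of $c(J)-1$'' is never defined anyway). Moreover, the decisive case, $d_x=3$ with an incomplete face triangle, is only announced, not proved.

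\textbf{How to close it.} Your closing idea is the right repair, but it must be run on the vertex set $V(I)\setminus S_I$ and for every $x\in S_I$, not only those with $d_x=3$. Take $I$ connected, excluding the trivial case of a star centred in $T_0$ with all leaves in $S$. Let $H$ be the graph on $V(I)\setminus S_I$ whose edges are $I\cap E(T_0)$ together with, for each $x\in S_I$, the $a_x$ pairs inside $N_I(x)$ that are not edges of $I$. These pairs are edges of the face triangle of $x$, so $H\subseteq T_0$. Rerouting each passage $u\,x\,u'$ of a path along $uu'$ shows that $H$ is connected and has no isolated vertices. If no two vertices of $S$ lie in faces sharing an edge, the added edges are distinct. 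Then $(2,3)$-sparsity of $T_0$ gives
\[
|I|=|H|+\sum_{x\in S_I}(d_x-a_x)\le 2v(I)-3+\sum_{x\in S_I}(d_x-2-a_x).
\]
Here $d_x-2-a_x\le 1$, with equality only when $x$ and its face triangle form a $K_4$ in $I$. This yields $|I|\le 2v(I)+r(I)-3$.

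\textbf{The spacing condition.} The spacing hypothesis is genuinely needed, and ``$B(T)\ge 1$'' is not the right form of it. Put $x,x'$ in the adjacent faces $w_1w_2w_3$ and $w_2w_3w_4$. The six edges at $x,x'$ together with $w_1w_2,w_1w_3,w_2w_4,w_3w_4$ form a $K_4$-free graph with $6$ vertices and $10$ edges, violating the claim. In this section $\lfloor (k-2)/s\rfloor\to\infty$, so this does not arise here.

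\textbf{Comparison with the paper.} The paper first deletes the $r(I)$ centres of $4$-cliques. Each deletion removes three edges and one vertex and keeps components connected. It then bounds each $K_4$-free component by $2v-3$, citing $2$-degeneracy. Your sparsity viewpoint is the sounder foundation for that last step, because $K_4$-free subgraphs of $T$ need not be $2$-degenerate. For example, take $T_0[w_a,\dots,w_{b+2}]$ for two faces $w_aw_{a+1}w_{a+2}$ and $w_bw_{b+1}w_{b+2}$ ($b\ge a+2$) that contain vertices of $S$. Delete $w_{a+1}w_{a+2}$ and $w_bw_{b+1}$, and add both inserted vertices with their three edges. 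Every vertex then has degree at least $3$. The chord argument above still supplies the bound $2v-3$ in this case.
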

\begin{proof}
    Let us first consider any connected subgraph $J \subset T$ with $r(J)=0$. It is easy to see that $J$ is 2-degenerate, implying $|J| \leq 2v(J) - 3$.

    Now, let $I_1, I_2, \ldots, I_c$ be all connected components of $I$. We delete from $I$ all $r(I)$ `centers' of $K_4$-patterns in $T$, so for the resulting induced subgraph $I'\subset I$ we have that $|I'|=|I|-3r(I)$ and $v(I')=v(I)-r(I)$. Note that erasing these vertices does not spoil the connectivity of each component $I_i, i\in[c]$, so we get new components by $I'_i, i\in[c]$. For these components we have already verified that $I'_i\leq 2v(I'_i)-3$. Therefore,
    $$|I|-3r(I) = |I'| \leq 2v(I')-3c= 2(v(I)-r(I)) -3c,$$
    which implies the desired inequality.
\end{proof}

We observe also that the big value of $r(I)$ implies the big size of $I$:
\begin{claim}
    For any connected subgraph $I\subset T$ with $r(I) > 0$
    \begin{equation}
    \label{big_r_value}
        |I| \geq (r(I)-1)B(T)/2.
    \end{equation}
\end{claim}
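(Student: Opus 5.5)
Let $I\subset T$ be connected with $r:=r(I)\ge 2$; for $r=1$ the bound $|I|\ge 0$ is trivial. The plan is a spanning-tree argument on the $K_4$-patterns. Each $4$-clique of $T$ is formed by an internal vertex $x\in S$ together with the three vertices of the comb triangle into which $x$ was inserted. This uses that the comb triangulation of the $k$-gon, being a triangulated polygon, has no $K_4$ of its own, and that $S$ is independent. So the cliques contained in $I$ correspond to $r$ distinct vertices $x_1,\dots,x_r\in S$, and $I$ contains all edges of each of these cliques.

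First I would record the metric fact behind~\eqref{border_length}. By the `even' placement of $S$ into the $k-2$ comb faces, the distance in $T$ between two $K_4$-patterns is at least $B(T)$: any path joining a vertex of one pattern to a vertex of another uses at least $B(T)$ edges (up to an additive constant absorbed below). Concretely, the comb faces are linearly ordered, consecutive faces share an edge, and a path in $T$ can advance by only a bounded number of faces per edge. The chosen faces are at least $\lfloor (k-2)/s\rfloor$ apart in this order, so the path needs about that many edges. I would state this as: any path in $T$ between distinct patterns $Q_i,Q_j$ has at least $B(T)/2$ edges lying outside $Q_i\cup Q_j$. The factor $1/2$ leaves slack for the comb geometry.

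Next, since $I$ is connected, contract each pattern $Q_i\subseteq I$ to a point and take, inside $I$, a minimal connected subgraph linking all the $Q_i$. The auxiliary graph on $\{Q_1,\dots,Q_r\}$, whose edges are connecting paths in $I$ between patterns that are internally disjoint from all patterns, is connected. So it has a spanning tree with $r-1$ edges, realized by paths $P_1,\dots,P_{r-1}$ in $I$. Each $P_j$ contributes at least $B(T)/2$ edges outside the cliques.

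The expected obstacle is double counting, since different tree paths may share edges. To handle it, orient the tree from a root and charge to each non-root pattern $Q$ the initial segment of the path leaving $Q$ toward its parent, of length about $B(T)/2$. That segment stays within distance $<B(T)/2$ of $Q$, and these neighbourhoods are pairwise disjoint because patterns are at mutual distance at least $B(T)$. So the $r-1$ charged segments are edge-disjoint, which gives $|I|\ge (r-1)B(T)/2$. The delicate point is the distance bound in the comb triangulation: it must be checked that moving one edge changes the index of the comb face by at most $2$, which is where the factor $2$ comes from.
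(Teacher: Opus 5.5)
Your identification of the $K_4$-patterns (a vertex of $S$ together with its comb triangle) is correct, and the auxiliary graph on patterns is indeed connected. The gap is the metric fact your charging step rests on, which is false.

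The comb is the square of a Hamilton path $w_1w_2\ldots w_k$ with faces $w_iw_{i+1}w_{i+2}$, and a single edge $w_iw_{i+2}$ skips two faces. So two patterns in faces $a<b$ with $b-a=B(T)$ are joined by the path $w_{a+2}w_{a+4}\ldots w_b$. They are therefore at distance only about $B(T)/2$; vertices of $S$ create no shortcuts, since each is adjacent only to the three vertices of one face. This is exactly the $B(T)/2$ in the paper's one-line proof. You note yourself that one edge can advance the face index by $2$, yet you then use ``mutual distance at least $B(T)$'' to make the radius-$B(T)/2$ neighbourhoods disjoint. With the true distance, those neighbourhoods are disjoint only up to radius about $B(T)/4$. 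Your charging then yields roughly $(r-1)B(T)/4$, a factor $2$ short of the claim. This loss is intrinsic to any argument that uses only pairwise distances: $r$ points at pairwise distance $D$ can be joined by a spider with about $rD/2$ edges.

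The missing ingredient is the linear structure of the comb. Each pair $\{w_m,w_{m+1}\}$ separates $\{w_j: j<m\}$ from $\{w_j: j>m+1\}$ in $T$. Order the patterns of $I$ along the comb as $Q_{a_1},\ldots,Q_{a_r}$. Then any path between non-consecutive patterns must pass through an intermediate one. Hence your auxiliary graph is forced to be the path $Q_{a_1}Q_{a_2}\cdots Q_{a_r}$. Moreover, the connecting path chosen for $Q_{a_i}Q_{a_{i+1}}$ has all its internal vertices in the gap strictly between these two patterns. The gaps are pairwise disjoint, so no double counting occurs at all. Each connecting path has at least $\lceil (a_{i+1}-a_i-2)/2\rceil\ge B(T)/2-1$ edges, none of them inside a clique. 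The $6r$ clique edges absorb the $-1$ per gap, which gives $|I|\ge (r-1)B(T)/2$.
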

\begin{proof}
     The bound immediately follows from the fact that the edge distance between two `centers' of $4$-cliques is at least $B(T)/2$.
\end{proof}

The superspreadness condition~\eqref{superspread} that implies the required upper bound on $p_c$ is stated as a separate lemma below. Its proof is similar to that of Lemma~\ref{lm: q-spread property}, with minor modifications concerning the number of copies of $K_4$ in a subgraph.

\begin{lemma}
    The hypergraph $\mathcal{H}$ is $(60e^2n^{-\frac{1}{2}}, \min(1/2, 1-\varepsilon), 1)$-superspread for large enough $n$.
\end{lemma}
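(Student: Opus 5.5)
We follow the counting scheme behind Lemma~\ref{lm: q-spread property}. Fix $I\subset E(T)$, viewed as a graph on the vertex set of $K_n$, with $c=c_I$ components, $v=v(I)$ non-isolated vertices and $|I|$ edges. The first step is the standard ratio
$$
\frac{|\mathcal{H}\cap\langle I\rangle|}{|\mathcal{H}|}\le \frac{N_I\cdot (n-v)!\,|\mathrm{Aut}(T)|}{n!\,|\mathrm{Aut}(T)|}\le \frac{N_I}{(n)_v},
$$
where $N_I$ is the number of subgraphs of $T$ isomorphic to $I$; equivalently, it counts the embeddings of $I$ into $T$. We bound $N_I$ component by component using Lemma~\ref{lm:subgraphs_count-rooted} with $\Delta(T)\le 5$. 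A component with $i_j$ edges can be placed in at most $n(5e)^{i_j}$ ways: choose a root vertex of $T$, then a connected $i_j$-edge subgraph through it. Multiplying over components and summing over the possible component sizes gives at most $n^{c}(5e)^{|I|}2^{|I|}$. Since $(n)_v\ge (n/e)^{v}$ for $v\le n$, we get
$$
\frac{|\mathcal{H}\cap\langle I\rangle|}{|\mathcal{H}|}\le (10e)^{|I|}e^{v}\, n^{c-v}.
$$
We need this to be at most $(60e^2)^{|I|}n^{-|I|/2}\, m^{-\beta c}$, where $m\sim 2n$. Since $v\le |I|+c$, we have $e^{v}\le e^{|I|+c}$, and $e^c\le e^{|I|}$, so the constants are absorbed. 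The task reduces to the exponent inequality
$$
v-c\ \ge\ \frac{|I|}{2}+\beta c\quad(\text{up to }O(1)\text{ absorbed per edge}).
$$

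The second step plugs in \eqref{general ineq}, summed over components: $|I|\le 2v+r-3c$, that is, $v\ge (|I|-r)/2+3c/2$. Hence
$$
v-c\ge \frac{|I|}{2}+\frac{c}{2}-\frac{r}{2}.
$$
When $r(I)=0$ we get $v-c\ge |I|/2+c/2$, which gives $\beta=1/2$ directly. This is the reason for the $\min(1/2,\cdot)$ in the statement.

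The main obstacle is the $K_4$ term $-r/2$. Each $K_4$ contributes six edges and four vertices, so a $K_4$ component alone satisfies $v-c=3=|I|/2$, leaving no room for the factor $m^{-\beta}$. We would handle this by splitting the count. The $r$ clique-containing pieces are very constrained: there are only $s\le n^{\varepsilon}$ copies of $K_4$ in $T$, so a component containing a $K_4$ has at most $s$ rather than $n$ choices of root. Concretely, root such a component at a $K_4$ center, which gives at most $s$ choices. This replaces one factor $n$ by $n^{\varepsilon}$ for each component with $r_j>0$, and gains $n^{1-\varepsilon}$ there. For a component $I_j$ with $r_j\ge 2$ cliques, \eqref{big_r_value} gives $|I_j|\ge (r_j-1)B/2$ with $B\approx n^{1-\varepsilon}$. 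So $r_j-1\le 2|I_j|/B$, and the loss $n^{(r_j-1)/2}\le n^{|I_j|/B}$ is only $e^{O(|I_j|\log n/B)}$, which is absorbed into the $C^{|I|}$ constant for large $n$.

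Combining the two cases, each component gets a saving of at least $n^{\min(1/2,\,1-\varepsilon)}$ up to constants per edge. The remaining work is to verify that the constant per edge stays below $60e^2$ after collecting the factors $5e$, $2$, $e$, $e$ together with the slack from $m\le 3n$. The condition $|I|\le\delta m$ with $\delta=1$ is automatic. The plain $q$-spread property follows from the same computation with $c\ge1$ and the $m^{-\beta c}$ factor dropped.
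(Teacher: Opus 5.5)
Your proposal is correct and is essentially the paper's argument. You root each $K_4$-containing component at a clique vertex, so its image has $O(s)=O(n^{\varepsilon})$ choices instead of $n$; you then apply \eqref{general ineq} to the power of $n$, and \eqref{big_r_value} with $n^{1/B(T)}<2$ to absorb the surplus $n^{(r(I)-c_S)/2}$ (here $c_S$ is the number of clique-containing components); your ratio $\mathrm{emb}(I,T)/(n)_v$ is just the dual form of the paper's $(n-v(I)+c-c_S)!$ block-ordering count.

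The only thing to tidy is the constant. Count embeddings component by component directly: $4s$ choices for the root image of a clique-containing component, $n$ otherwise, then at most $5$ per further vertex. Lemma~\ref{lm:subgraphs_count-rooted} counts image subgraphs rather than embeddings, and its factors $5e\cdot 2$, together with $e^2$, $2$ and $3$, give roughly $60e^3$ instead of the claimed $60e^2$.
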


\begin{proof}
    Fix $I\subseteq T$. Then $|\mathcal{H}\cap\langle I\rangle|$ is the total number of ways to extend $I$ to a copy of $T$ in $K_n$. Or, in other words, it is the number of permutations $\sigma$ (right cosets) in $S_n/\mathrm{Aut}(T)$ such that $I\subset \sigma(T)$. We note that, if $K\subset I$ is a $4$-clique, then $\sigma(K)\cap S(\sigma(T))$ is a single vertex. 

    Suppose that $I$ consists of $c$ connected components $I_1, \ldots, I_c$. Let $0 \leq c_S \leq c$ of them contain a $K_4$ --- without loss of generality, these are $I_1,\ldots,I_{c_S}$. Let us specify a single root $x_i$ in every $I_i$ in such a way that if $K_4\subset I_i$, then $x_i$ belongs to a 4-clique. There are at most $\prod_{i=1}^c v(I_i)\leq \prod_{i=1}^c (|I_i|+1)\leq 2^{|I|}$ ways to specify roots. 

    For $i\in[c_S]$, we force $\sigma(x_i)\in S$. Therefore, the images of $x_i$ can be specified in at most $s^{c_S}$ ways. Next, we reveal the entire $\sigma(I_i)$ for such $i$. Since $I_i$ is connected and $\Delta(T)\leq 5$, we can place each next vertex of $I_i$ in any order that respects connectivity of $I_i$. In other words, we embed an $x_i$-rooted spanning subtree of $I_i$ edge be edge, where each next edge has at most 5 available positions. Therefore, the total number of embeddings is at most $5^{|I_i|}$.
    
    Then, we specify the action of $\sigma$ on $n-v(I)+c-c_S$ remaining blocks $V(I_{c'+1}),\ldots,V(I_c),x\in[n]\setminus V(I)$ in $(n-v(I)+c-c_S)!$ ways. Finally, it remains to reveal the positions of vertices within every block $\sigma(I_i)$, $i\in[c]\setminus[c_S]$. We follow these blocks one by one, according to the specified order dictated by $\sigma$. For each unrevealed $I_i$, we first place its root at the first unoccupied vertex. Since $I_i$ is connected and $\Delta(T)\leq 5$, as above we get that the total number of embeddings is at most $5^{|I_i|}$.
    
    We conclude that
    $$
    |\mathcal{H} \cap \langle I \rangle| \leq \frac{10^{|I|}}{|\mathrm{Aut}(T)|} s^{c_S} (n-v(I)+c-c_S)! .
    $$

    Now we are ready to prove the (super)spreadness property. Substituting~\eqref{general ineq}, we get:
\begin{align*}
    |\mathcal{H} \cap \langle I \rangle| &\leq \frac{10^{|I|}}{|\mathrm{Aut}(T)|} s^{c_S} \left(n-\frac{|I|}{2} + \frac{r(I)}{2}-\frac{1}{2}c -c_S\right)! \\
    &\leq (1+o(1))\frac{10^{|I|}}{|\mathrm{Aut}(T)|} s^{c_S} n! \left(\frac{e}{n}\right)^{\frac{|I|}{2}+\frac{c+c_S}{2}-\frac{r(I)-c_S}{2}} \leq \\ &\leq (1+o(1))\frac{(10e^2)^{|I|}}{|\mathrm{Aut}(T)|} n^{-\frac{|I|}{2}} \left(\frac{s} {n}\right)^{c_S} n^{\frac{c_S-c}{2}} n^{\frac{r(I)-c_S}{2}} n!.
\end{align*}
Summing~\eqref{big_r_value} over all $c_S$ components, we get $|I| \geq (r(I)-c_S)B(T)/2$. Also we note that 
$$
(s/n)^{c_S}n^{(c_S-c)/2} \leq n^{(\varepsilon - 1/2)c_S - c/2} \leq n^{-\min(1/2,1-\varepsilon)c}
$$ 
since $c_S \leq c$ and $s=s(n)\leq n ^{\varepsilon}$. Substituting it, we get 
$$|\mathcal{H} \cap \langle I \rangle| \leq (1+o(1))\frac{(10e^2)^{|I|}}{|\mathrm{Aut}(T)|} n^{-\frac{|I|}{2}} n^{-\min(1/2, 1-\varepsilon)c} n^{\frac{|I|}{B(T)}} n!.$$
Finally, recalling~\eqref{border_length}, we get that $n^{\frac{1}{B(T)}} \leq n^{\frac{s}{k-2-s}} \leq \exp \left\{\frac{n^{\varepsilon} \ln n}{n-2s-2} \right\} < 2$ for large enough $n$. So, since $|\mathcal{H}| = \frac{n!}{|\mathrm{Aut}(T)|}$ and $n \geq \frac{m}{3}$, we get

$$|\mathcal{H} \cap \langle I \rangle| \leq (60e^2 n^{-1/2})^{|I|}  m^{-\min(1/2, 1-\varepsilon)c} |\mathcal{H}| ,$$
as desired.
\end{proof}

\section{Upper bound: \boldmath $n^{1-o(1)} \leq s(n) = o(n)$}
\label{interm_case}

Unlike the previous two cases, the triangulation $T=T(n,k) \in \mathcal{T}_{n, k}$ presented in this section has an unbounded maximum degree. Thus, we cannot directly apply Theorem~\ref{main_th}. Instead, we will apply a more general framework by Spiro~\cite{spiro}. To this end, we find an integer constant $t$ and numbers $m_2,\ldots,m_t$ such that the
 $m$-uniform hypergraph $\mathcal{H}=\mathcal{H}(n,k)$ of all copies of $T$ is $(q; m, m_2, \ldots, m_t, 1)$-spread for $q=O\left(n^{-\frac{1}{3-\alpha}}\right)$.
By Theorem~\ref{spiro-theorem}, 
 this implies that, for some constant $K_0$, $p_c(\mathcal{T}_{n,k}) \leq 2K_0tq=O\left(n^{-\frac{1}{3-\alpha}}\right)$, as desired.

The reason for the bounded degree restriction in the statement of Theorem~\ref{main_th} is that the factor $\Delta^{|I|}$ naturally appears in its proof due to the application of Lemma~\ref{lm:subgraphs_count-rooted}. Let us recall that the main ingredient in this proof --- a fragmentation process --- essentially relies
on two quantities, the number of subgraphs and the number of extensions of a specific subgraph $I$ to a full copy of $T$. Both quantities are estimated via three parameters of $I$: the number of vertices, the number of edges, and the number of connected components. We manage to get rid of the maximum degree condition by introducing another parameter that evaluates a trade-off between the two quantities --- the number of components in the $I$-neighbourhood of a vertex with unbounded degree.

\paragraph{Construction of $T(n,k)$, see Fig~\ref{big k, case2}.}

Similar to the previous section, we insert each internal vertex $v \in S(T)$ into a specific cycle $C(v)$ and connect $v$ with all vertices of $C(v)$. However, in this case the cycles $C(v)$ have non-constant length $\ell=\ell(n)$. We define these lengths in a way such that any two faces bounded by consecutive cycles are separated by exactly $\ell$ vertices. More formally, $T(n, k)$ is a union of blocks $W_1,B_1,W_2,B_2,\ldots,B_{s-1},W_{s}, B'_s$, where
\begin{itemize}
    \item each next block appears to the right of the previous block, they share a single edge, and any two non-consecutive blocks do not have common vertices;
    \item each $W_i$ is a `wheel' graph, i.e., a cycle $C_i$ of length $\ell$ with one vertex $s_i \in S$ in the center connected to all vertices in $V(C_i)$;
    \item each $B_i$ is a `comb' triangulation,  with $\ell+4$ vertices (see Fig.~\ref{comb_triang}) unless $i=s$: the number of vertices in $B'_s$ belongs to $[2,2\ell+1]$.
\end{itemize}

There may be several ways to choose a suitable $\ell$, for which it is possible to construct such a triangulation. But any such choice satisfies the following inequalities:
\begin{equation}
\label{l-s connection}
    n - \ell + 1 \leq s + 2 \ell s \leq n + \ell,
\end{equation}

\renewcommand{\figurename}{Fig.}
\begin{figure}[htbp]
    \centering
    \begin{minipage}{0.8\textwidth}
        \centering
        \includegraphics[width=\linewidth]{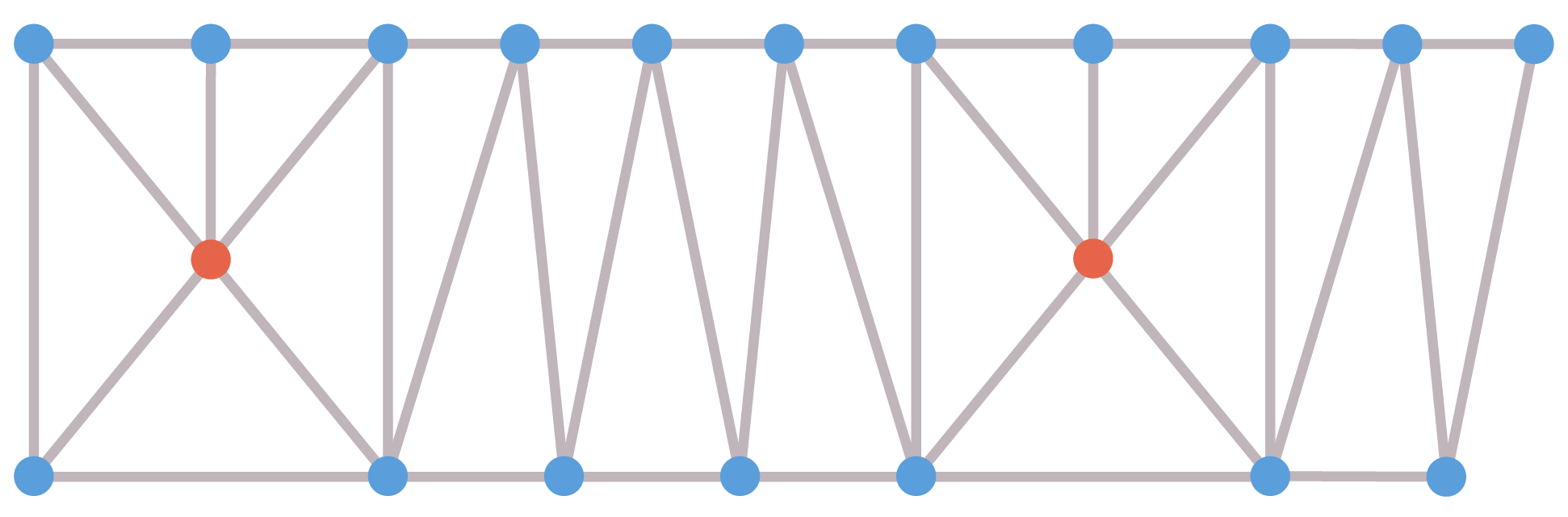} 
    \end{minipage}
    \caption{Example of triangulation for $n=20, k=18, \ell=5$; internal $s=2$ vertices are in red.}
    \label{big k, case2}
\end{figure}

Note that the maximum degree $\Delta(T) = \ell$ is non-constant, but all vertices from $V(T) \backslash S(T)$ have degrees at most $5$. The number of internal edges between any pair of vertices from $S(T)$ is bounded by $B(T) := \ell+3$. Also,  note that $\ell= n^{o(1)}$ since $s\geq n^{1-o(1)}$.

\paragraph{Checking the requirement of Theorem~\ref{spiro-theorem}.} For the rest of the proof, for $I \subseteq T$, we denote by $c(I)$ the number of components in $I$, by $0 \leq c_S(I) \leq c(I)$ the number of components that intersect $S=S(T)$, and by $g(I) \geq c_S(I)$ the number of vertices in $V(I) \cap S$. 

In order to prove the spread condition, we need to bound two values as mentioned above: the number of subgraphs $I \subseteq T$ with specified parameters and the number of ways to extend a fixed subgraph $I$ to a copy of $T$ from $\mathcal{H}$. Both quantities depend on the values of the parameters of $I$, where the new parameter $t(I)$ --- defined below --- plays a crucial role.

For any vertex $v\in V(I) \cap S$, let $t_I(v)\in[0,\ell]$ be the total number of components (disjoint arcs) in $I[N(v)]$ --- the neigbourhood of $v$ in $I$. More precisely,
\begin{equation}
    \nonumber
    t_I(v):=
\begin{cases}
0,\text{if }C(v)\subseteq I; \\
c(I[N(v)]) \text{, otherwise.}
\end{cases}
\end{equation}
\noindent
Set 
$$
 t(I) = \sum\limits_{v \in V(I) \cap S} t_I(v).
$$
Clearly, $t(I) \leq |I|$. As we show in what follows, $t(I)$ can be used naturally to estimate both the number of ways to choose a subgraph in every cycle $C(v)$, $v\in S$, and the number of ways to extend the union of such subgraphs to the union of $C(v)$.

\noindent \paragraph{Estimating the number of subgraphs with certain parameters.} A sufficient estimate for the number of subgraphs is stated below.  Since Definition~\ref{def:spiro_spread} requires checking the spread condition for an arbitrary subset $A \subseteq V(\mathcal{H})$ such that $\langle A \rangle \cap \mathcal{H} \neq \emptyset$, we state our bound for a subgraph $J \subseteq T$.

\begin{lemma}
\label{lm:subgraphs_count}
    Let $J$ be an arbitrary subgraph of $T$ with $|J|=j$. The number of subgraphs $I$ of $J$ with $|I|=i$, $c(I)=c$, and $t(I)=t$ is at most
    $$(2048e)^{i} {2j \choose c} {\ell}^{t}.$$
\end{lemma}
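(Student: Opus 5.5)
We count subgraphs $I\subseteq J$ with the standard root-and-grow encoding behind Lemma~\ref{lm:subgraphs_count-rooted}. The unbounded degree of the centres $s\in S$ is paid for by the factor $\ell^{t}$: each arc of $I[N(v)]$ around a centre $v$ is charged one factor $\ell$ for choosing its starting position on $C(v)$.

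\textbf{Choosing the roots.} Each component of $I$ contains an edge of $J$, so it contains a vertex of $V(J)$, and $|V(J)|\le 2j$. We choose an unordered $c$-subset of $V(J)$ as the root set, one root per component, in at most ${2j\choose c}$ ways. For simplicity we let the root of a component meeting $S$ be a non-central vertex when that is possible. We also choose the sizes $i_1,\dots,i_c\ge 1$ of the components, which are attached to the roots in the order of their labels; there are at most $2^{i}$ such compositions. It then suffices to show that a connected subgraph of $T$ with $i'$ edges containing a fixed root, and with $t'$ arcs around the centres it contains, can be chosen in at most $C^{i'}\ell^{t'}$ ways with $C=1024e$.

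\textbf{Growing a single component.} Let $T'$ be $T$ with the spokes at all centres removed. Then $\Delta(T')\le 5$, and every non-centre vertex has degree at most $5+1$ in $T$, since it lies on the cycles of at most two wheels. We encode a connected $I'$ as a non-central connected skeleton $I'\cap T'$ together with the spoke data at each centre. Two cases need care.

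If $I'\cap T'$ is disconnected, its pieces are joined only through centres. We handle this by a DFS over the whole of $I'$. Every edge is reached from an already embedded vertex, and the only step with an unbounded number of choices is leaving a centre $v$ along a spoke. The spokes at $v$ split into the arcs of $I[N(v)]$, and within one arc consecutive spokes go to consecutive vertices of $C(v)$, so the next spoke is one of two neighbours of the previous one. A new choice among $\ell$ positions is therefore needed only when starting a new arc, which happens at most $t_I(v)$ times. The exceptional case $C(v)\subseteq I$, where $t_I(v)=0$, costs nothing, because then all of $C(v)$ is determined and costs only $O(1)$ per edge.

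Every other step is an exploration from a vertex of bounded degree, at most $6$ or $7$. We record each such step in the spirit of Lemma~\ref{lm:subgraphs_count-rooted} as a pattern of which incident edges are used, at most $2^{7}$ per vertex. Charging this per edge gives the $O(1)^{i'}$ factor; with generous slack this is bounded by $(1024e)^{i'}$, and combined with the $2^{i}$ compositions we obtain $(2048e)^i$.

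\textbf{The main obstacle} is making the arc-charging rigorous. Once we enter $N(v)$ at an arbitrary point of an arc, we must specify which arc we are in and its endpoints without paying $\ell$ again. We plan to encode each arc by its starting vertex, which costs the $\ell$ factor, together with its length, which is absorbed into $2^{\text{length}}\le 2^{\#\text{spokes}}$ via a composition count. Each arc is then traversed deterministically around $C(v)$. Summing over $t_v=t_I(v)$ with $\sum_v t_v=t$ by another composition bound, at most $2^{i}$, keeps all constants within $(2048e)^i$.
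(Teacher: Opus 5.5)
Your proposal is correct in substance and rests on the same charging mechanism as the paper. A factor $\ell$ is paid only for the first vertex of each arc of $N_I(v)$. Arc lengths and the values $t_I(v)$ are encoded by compositions. The case $C(v)\subseteq I$ is paid for by the $\ell$ cycle edges it forces into $I$. All remaining edges are grown with bounded degree.

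The bookkeeping is organised differently, though. The paper proceeds in stages:
\begin{itemize}
\item it roots each component meeting $S$ at its leftmost centre;
\item it locates all centres of $I$ in advance: a component meets at most $1+2|I_h|/\ell$ wheels by the spacing $B(T)/2\ge \ell/2$, and a subset of these wheels is then chosen;
\item it fixes all $t_I(v)$;
\item it moves the roots from the centres to the leftmost vertices of the arcs, which is where $\ell^{t}$ enters;
\item it grows the spoke-free remainder as a family of rooted components via Lemma~\ref{lm:subgraphs_count-rooted}, giving $(4e)^{i-t}$;
\item it reattaches the centres at the end by specifying arc lengths.
\end{itemize}
You instead root at non-central vertices and run a single exploration per component. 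Each centre is discovered at cost $O(1)$ through the spoke bit of a cycle vertex, and all its arcs are revealed at that moment. This dispenses with the wheel-spacing estimate and the root shifting. The price is a cruder per-vertex pattern count in place of Lemma~\ref{lm:subgraphs_count-rooted}.

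Two minor remarks.

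First, your constants do not close as stated. A component consisting of a single edge between two non-central vertices already costs two patterns of size $2^{7}$, i.e.\ $2^{14}>1024e$. To reach the stated $(2048e)^i$ you need a tighter count. For instance, non-central vertices have degree at most $5$ in $T$, since each lies on at most one wheel cycle rather than two. Also, the edge through which a vertex is discovered need not be re-encoded. Alternatively, you can settle for a bound $e^{O(i)}\binom{2j}{c}\ell^{t}$, which is all that is used later.

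Second, when $C(v)\subseteq I$ the spokes at $v$ are not determined by $C(v)$. They are read off from the spoke bits at the cycle vertices, which your exploration reaches along $C(v)$ without passing through $v$. So the claimed $O(1)$ cost per edge is right, but for that reason.
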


\begin{proof}
    We choose a subgraph $I\subset J$ in the following way:
    \begin{enumerate}
    \item choose a single root from $V(J)$ of every connected component of $I$ so that the root of every component that contains a vertex from $S$ is its leftmost vertex from $S$;

    \item for every connected component $I_h\subset I$ that has a vertex from $S$, specify the number of wheels $w_h$ that this component intersects;

    \item for every connected component $I_h$, choose the set of its internal vertices $V(I_h)\cap S$;

    \item specify the values of $t_I(v)$ for all $v\in S\cap V(I)$;

    \item embed all edges of $I$ but those that touch roots from $S$;

    \item embed the remaining edges of $I$.
    \end{enumerate}

    The first step can be done in at most ${v(J) \choose c} \leq {2j \choose c}$ ways. As soon as the roots are chosen, the number of components that contain vertices of $S$ is identified. Denote it by $c_S$.
    
 Next, we recall that the minimum edge distance between two `wheels' is at least $\frac{B(T)}{2} \geq \frac{\ell}{2}$. Therefore,
  we establish that
 $$\sum\limits_{h=1}^{c_S}w_h \leq c_S(I) + \frac{2|I|}{\ell}.$$ So the number of ways to specify $w_1, w_2, \ldots, w_{c_S}$ is bounded by $2^{c+2i/\ell} \leq 2^{2i}$.
 
    Then, we reveal the positions of the remaining `intermediate' internal vertices in $V(I)\cap S$ in at most $2^{i}$ ways.

    Since $t=\sum_{v\in V(I)\cap S}t_I(v)$ and all $t_I(v)$ are non-negative integers, the number of ways to specify all $t_I(v)$ is at most ${t+g(I)-1\choose g(I)-1}\leq 2^{t+g(I)}\leq 2^{2i}$.

In order to perform the penultimate step, we shift roots from centers of wheels to its disjoint arcs from $I$. This is where parameters $t_v$ play the crucial role. For each internal vertex $v \in I \cap S$ we choose the `leftmost' vertices in arcs (connected components) of $N_I(v)$ as root vertices in at most $\ell^{t_v(I)}$ ways, which gives us an additional factor of $\ell^{t}$. In the case when $C(v) \subseteq I$ we choose the root in $N(v)=C(v)$ arbitrarily. Let $t \leq r \leq g(I)+t$ be the total number of chosen roots. Now, it suffices to embed at most $c-c_S+r$ rooted components with fixed roots (some component may contain several roots) and with the total number of edges at most $i-t$. To this end, we first fix the sizes of these components $i_h$, $1 \leq h \leq c-c_S+r$, in at most ${i-t+c-c_S + r \choose c-c_S+r} \leq 2^{i+c+g(I)} \leq 2^{3i}$ ways. Then, for every $h\leq c-c_S+r$, we expose the $h$-th component. From Lemma~\ref{lm:subgraphs_count-rooted}, we get that it can be done in at most $(4e)^{i-t}$ ways.

 Finally, it remains to attach centers of wheels to embeded arcs --- or, in other words, reveal $N_I(v)$, for $v\in S$.  Note that, for each such arc, we have already identified its leftmost vertex. So, we only need to specify the length of each arc. The set of arcs in $\cup_c (C(v)\cap I)$ is known and has total length at most $i$. Therefore, the number of ways to sepcify these lengths is at most $2^i$, completing the construction of $I$. 
 
Summing up, we get that the number of subgraphs $I$ with given parameters is at most
    $$ {2j \choose c} 2^{9i} (4e)^{i-t} \ell^t \leq (2048 e)^i {2j \choose c} {\ell}^t,
    $$
as desired.
\end{proof}

\vspace{\baselineskip}

\noindent \paragraph{Estimating the number of ways to extend a subgraph.}
First, we evaluate densities of subgraphs of $T$ in terms of the introduced parameters:
\begin{claim}
    \label{subgraphs relations, dense case}
    Let $I$ be a subgraph of $T$ with $|I|=i$, $v(I)=v$, $c(I)=c$, $c_S(I) = c_S$, and $t(I)=t$. Then $$v \geq \frac{i}{2} + c + \frac{c-c_S}{2} + \frac{t}{2}.$$
\end{claim}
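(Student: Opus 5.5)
The inequality is equivalent to
\[
2v-i\;\ge\;3c-c_S+t .
\]
Both sides are additive over components: $v$, $i$, $c$, $c_S$ and $t$ are sums of the corresponding quantities of the components $I_1,\dots,I_c$, because $t_I(x)$ only sees the component containing $x$. So it suffices to prove, for each connected $I_h$, the \emph{excess} bound $2v(I_h)-|I_h|\ge 3$ if $I_h\cap S=\emptyset$, and $2v(I_h)-|I_h|\ge 2+t(I_h)$ otherwise.

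\emph{Components avoiding $S$.} The graph $T-S$ is the triangulated $k$-gon obtained by gluing the rims $C_i$ of the wheels to the comb blocks along single edges. It is outerplanar and hence 2-degenerate, as in the proof of~\eqref{general ineq}. So every connected subgraph of it with at least one edge has at most $2v-3$ edges. A component of $I$ is a subgraph with at least one edge, so the bound $2v-i\ge3$ follows.

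\emph{Components meeting $S$.} I would cut such a component into pieces of two kinds.
\begin{itemize}
\item For each $x\in V(I_h)\cap S$, a \emph{wheel piece} $P_x$: the vertex $x$, its neighbours in $I$, all spokes of $I$ at $x$, and all edges of $I$ inside $N_I(x)$.
\item The \emph{remainder} $R$: all edges of $I_h$ not in any wheel piece, with their endpoints. This is a subgraph of the outerplanar graph $T-S$.
\end{itemize}

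For a wheel piece with $C(x)\not\subseteq I$, write $a_1,\dots,a_{t_x}$ for the sizes of the arcs of $I[N(x)]$. Then $P_x$ has $1+\sum a_j$ vertices and at most $\sum(a_j-1)+\sum a_j=2\sum a_j-t_x$ edges, so its excess is at least $2+t_x$. If $C(x)\subseteq I$, the piece is the full wheel, with $\ell+1$ vertices and $2\ell$ edges, so its excess is $2=2+t_x$. Components of $R$ that contain edges have excess at least $3$, as above.

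Then I would reassemble $I_h$ from the pieces. Since $I_h$ is connected, glue the pieces one at a time along a spanning tree of the piece-incidence structure. Each identification of one vertex shared by two pieces lowers the total excess by exactly $2$.

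\emph{Where the pieces meet.} Consecutive blocks share only a single edge, so a wheel piece can meet the rest of $T$ only in the at most four endpoints of the two edges that $W_x$ shares with $B_{x-1}$ and $B_x$. I therefore expect the gluing to be tree-like: every identification joins a wheel piece to an $R$-component. Each time a wheel piece is attached, the at least $3$ excess of the new $R$-component it brings in pays for the single identification, with $1$ to spare. Additional identifications can arise when a wheel piece touches the same $R$-component through both shared edges, or touches two $R$-components. These would be charged either to the extra arc that this forces (an arc at each side contributes separately to $t_x$) or to the surplus $1$ of an $R$-component.

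\emph{Main obstacle.} The hardest step is exactly this local bookkeeping at the two shared edges of each wheel. I would handle it by case analysis on which of the four boundary vertices lie in $I$ and whether the shared edges themselves are in $I$. In particular, a shared edge that is a rim edge inside $N_I(x)$ must be assigned to $P_x$ and not to $R$, so that nothing is counted twice. If this gluing proves too delicate, the fallback is induction on $g(I_h)$. Delete $x$ together with the rim edges inside $N_I(x)$, and bound the loss of excess by $2+t_x$. The resulting split into at most $t_x+2$ pieces is again controlled by the at most two contacts of $W_x$ with its neighbouring blocks.
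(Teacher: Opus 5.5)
Your reduction to components, the bound $2v-|I_h|\ge 3$ for components avoiding $S$ (via outerplanarity of $T-S$), and the computation that a wheel piece has excess exactly $2+t_x$ are all correct. Your overall plan (a per-wheel bound, then paying for the connections between wheels) is also the paper's. The genuine gap is the step you flag yourself, the charging at the shared edges. It cannot be closed, because the inequality is false as stated. The claim is equivalent to $2v-i\ge 3c-c_S+t$. Take $I=T$: then $c=c_S=1$ and $t=0$ (every rim lies in $I$), while $v=n$ and $i=3n-3-k=2n+s-3$. So $2v-i=3-s$, which is below $2$ for every $s\ge 2$, and here $s\ge n^{1-o(1)}$. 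A local counterexample is $I=W_j\cup B_j\cup W_{j+1}$. This is a triangulated disc with $v=3\ell+2$ vertices, two interior vertices and $i=6\ell+3$ edges, so $2v-i=1<2$. More generally, a sub-triangulation containing $g$ complete wheels has $2v-i=3-g$.

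In your scheme this is exactly the configuration you have not accounted for. Let $cd$ and $ef$ be the edges that $B_j$ shares with $W_j$ and $W_{j+1}$. Both lie in the wheel pieces, so the $R$-component is $B_j$ minus these two edges: it has $\ell+4$ vertices, $2\ell+3$ edges and excess $5$. It meets the wheel pieces in the four vertices $c,d,e,f$, giving four identifications that cost $8$, and it creates no additional arc. Apart from the base $2$ of the first wheel piece, only the $2$ of the second wheel piece and the $5$ of the $R$-component are available, so the component ends with excess $1+t$ instead of $2+t$. Neither an ``extra arc'' nor a ``surplus $1$'' is there to pay. The same happens with partial wheels: the triangles $s_jcd$ and $s_{j+1}ef$ together with all of $B_j$ give $2v-i=3$, while $t=2$.

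The paper's own sketch fails at the same point. It asserts that completing a component between two consecutive wheels on $x$ new vertices costs at most $2x+2$ edges, whereas a complete comb costs $2x+3$. So what this decomposition can actually deliver is the inequality with a negative correction term. That term is at least $g-1$ for the sub-triangulations above, and it would have to be carried through the extension count and the final spread computation. No refinement of the case analysis at the shared edges will yield the claim as written.

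A smaller inaccuracy: $R$ also contains every rim edge of $C(x)$ with an endpoint outside $N_I(x)$. So $R$ can meet $P_x$ at arbitrary rim vertices, not only at the four endpoints of the shared edges, and these contacts must be counted too.
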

\begin{proof}
    Let $I_j$ be a connected component of $I$ that contains internal vertices from $S$. For every vertex $u\in S\cap I_j$  the induced subgraph $I_j(u):=I_j \cap (N_I(u)\cup u)$ satisfies
    \begin{equation}
    \label{wheel_subgraphs_density}
        |I_j(u)| \leq 2v(I_j(u)) - 2 - t_I(u).
    \end{equation}
    Indeed, for the whole 'wheel'-subgraph $W_u$ around $u$ we have $|W_u| = 2v(W_u) - 2$ but the presence of $t_I(u)$ disconnected components on $C(u)$ decreases the number of edges by $t_I(u)$. Summing up the inequalities in~\eqref{wheel_subgraphs_density} over all vertices in $S \cap I_j$ and observing that completing the remaining subgraph of $I_j$ between two consecutive wheels on, say, $x$ vertices costs at most $2x+2$ edges, we get that
    \begin{equation}
    \label{subg_rel_eq1}
        |I_j| \leq 2v(I_j) - 2 - t(I_j),
    \end{equation}
    Next, the remaining $c-c_S$ components $I_j$ that do not contain vertices from $S$ are 2-degenerate, thus 
    \begin{equation} 
    \label{subg_rel_eq2}
        |I_j| \leq 2v(I_j) - 3.
    \end{equation}
    Summing (\ref{subg_rel_eq1}, \ref{subg_rel_eq2}) over all $c$ components of $I$ we get the statement.
\end{proof}

Now we provide a sufficient bound for the number of extensions.
\begin{lemma}
\label{lm:extensions-count-third-case}
     Let $I$ be a subgraph of $T$ with $|I|=i$, $v(I)=v$, $c(I)=c$, $c_S(I) = c_S,$ and $t(I)=t$. Then
     $$|\mathcal{H} \cap \langle I \rangle| \leq 10^{i} \ell^{t} \left(n-\frac{i}{2}-\frac{t}{2} - \frac{c-c_S}{2}\right)! .$$
\end{lemma}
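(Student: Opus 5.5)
We will bound the number of cosets $\sigma\in S_n/\mathrm{Aut}(T)$ with $I\subseteq\sigma(T)$, i.e.\ the number of copies of $T$ in $K_n$ that contain the fixed edge set $I$. The argument follows the counting in the proof for the case $s(n)=O(n^{\varepsilon})$, with two changes: the wheel centres now have unbounded degree $\ell$, and we do not pay $s^{c_S}$ for placing roots in $S$.

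\textbf{Step 1: the $S$-components.} For each component $I_h$ that meets $S$, choose as root its leftmost vertex of $S$. In a copy $\sigma(T)$ this root is the centre of a wheel, so its neighbours lie on the cycle $C(\sigma(x_h))$. We first fix which vertex of $I_h$ is the root; this costs at most $2^{|I|}$ over all components, as before. We then reveal $\sigma$ on $V(I_h)$ by embedding a spanning tree of $I_h$ edge by edge. An edge leaving a vertex of degree at most $5$ has at most $5$ choices, which contributes at most $5^{i}$ overall. An edge leaving a wheel centre $u$ would naively cost a factor $\ell$. To avoid paying this for every edge, we pay it only once per arc of $I[N(u)]$: we choose the image of the first vertex of each arc on $C(u)$ in at most $\ell$ ways, and then follow the arc along the cycle, which has degree-$5$ cost. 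Across all centres this gives the factor $\ell^{t}$. When $C(u)\subseteq I$ we have $t_I(u)=0$, and rotating the wheel is absorbed by the choice of the image of the block. The image of the root of an $S$-component is not fixed in advance. Instead it is chosen together with the positions of the free blocks, as in Step 2.

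\textbf{Step 2: the remaining blocks.} Treat each component of $I$ as a single block and each vertex of $[n]\setminus V(I)$ as a singleton block. Order all blocks by $\sigma$, which gives at most $(n-v+c)!$ choices. The internal embedding of each block is then determined up to the $10^{i}\ell^{t}$ factor from Step 1.

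\textbf{Step 3: combining with the density bound.} By Claim~\ref{subgraphs relations, dense case}, $n-v+c\le n-\frac i2-\frac{c-c_S}{2}-\frac t2$. This is exactly the factorial in the statement, and the total bound is $2^i5^i\ell^t(\cdot)!=10^i\ell^t(\cdot)!$.

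\textbf{Main obstacle.} The delicate step is that $S$-components are counted as ordinary blocks, with no extra factor $s^{c_S}$. This works because their density gain shows up only as $c$ in Claim~\ref{subgraphs relations, dense case}, not as $\frac{3c}{2}$. One must also check that once the block order fixes where the root sits, the rest of the component is determined. I would verify this by noting that the block order determines the images of the vertices of each block as an ordered set, after which only the counted choices (the tree embedding and the per-arc choices $\ell^{t}$) remain. If $(n-v+c)!$ turns out to be slightly too coarse, I would instead count positions vertex by vertex, as in the previous section.
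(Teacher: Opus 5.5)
Your plan is the paper's proof: guess the roots ($2^{i}$), order the $n-v+c$ blocks, embed each component edge by edge paying $5$ per edge at a vertex outside $S$ and $\ell$ per arc at a centre, and finish with Claim~\ref{subgraphs relations, dense case}. Two of your local justifications are wrong, although both are easy to repair.

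(i) Suppose the root is a centre $u$ with $C(u)\subseteq I$. The block order fixes only the image of $u$. The image of one vertex of $C(u)$ and the orientation are still free, which gives up to $2\ell$ choices, and $\ell^{t}$ does not pay for them because $t_I(u)=0$. Your roots of $S$-components are always centres, so this happens at every such root carrying a full wheel. The paper pays $\ell<2^{\ell}$ at this point. That is affordable because the wheel contributes at least $\ell$ edges to $I$, but the constant $10$ has to grow.

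(ii) An ordering of the blocks does not determine ``the images of the vertices of each block as an ordered set''. The $(n-v+c)!$ bound comes from scanning the vertices of $T$ in a fixed order. Each still unoccupied position receives the next block (a singleton, or the root of a component, whose other vertices are then embedded from it). This recovers $\sigma$ only if every root is the first vertex of its component in the scan order. With roots at the leftmost $S$-vertex and a left-to-right scan, this fails: a component may contain vertices of $B_{j-1}$ lying before its root $s_j$. Either take the root to be the first vertex in the scan order, as the paper does (``the leftmost vertex (the root)''), or scan all of $S$ first.

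A more serious problem sits in Step~3, and you share it with the paper: Claim~\ref{subgraphs relations, dense case} is false. A comb $B_j$ between two wheels adds $x=\ell$ new vertices but $2\ell+3=2x+3$ new edges, not $2x+2$.

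For $I=T$ we have $v=n$, $c=c_S=1$, $t=0$ and $i=m=2n-3+s$. So $v\ge i/2+1$ fails once $s\ge 2$, and the factorial in the lemma then has the negative argument $\frac{3-s}{2}$.

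For $I=W_j\cup B_j\cup W_{j+1}$ we get $i=2v-1$. Since $I$ embeds into $T$ at each of the $s-1$ pairs of consecutive wheels, $|\mathcal{H}\cap\langle I\rangle|\ge (s-1)(n-v)!/|\mathrm{Aut}(T)|$. This exceeds $10^{i}\bigl(n-\tfrac{i}{2}\bigr)!$ by a factor $\Omega\bigl(s\,n^{-1/2}10^{-6\ell}\bigr)$. That factor is unbounded when $\ell=o(\log n)$, which lies inside this section's regime.

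So Lemma~\ref{lm:extensions-count-third-case} cannot be established as stated, either by your argument or by the paper's. The density bound has to be weakened by about $\tfrac12$ for each comb joining two wheels inside a component of $I$. The resulting loss would then have to be accounted for in the final spread computation.
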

\begin{proof}
    First, we fix roots in $c$ components in at most $2^i$ ways. Then, we order the set of connected components and remaining isolated vertices in $(n-v+c)!$ ways. It remains to count the number of ways to embed each component of $I$ into $T$ when the leftmost vertex (the root) has been already embedded. We embed each connected component edge by edge, in an arbitrary order that preserves connectivity, unless the leftmost vertex $x$ of current non-embedded edge belongs to $S$. In the latter case, we distinguish between the case when $t_I(x)=0$ and $t_I(x)>0$. When $t_I(x)=0$ and $x$ is the root, there are at most $\ell<2^{\ell}$ ways to embed the entire wheel. When $t_I(x)=0$ and $x$ is not the root, we have already embeded some edge that touches $x$, so the number of ways to embed the remaining part of the wheel is at most $2<2^{\ell}$. Finally, when $t_I(x)>0$, we only need to embed each leftmost vertex of each arc, which can be done in at most $\ell^{t_I(x)}$ ways. We also observe that when $x\notin S$, its degree in $T$ is at most 5, thus every edge growing from $x$ can be embedded in at most 5 ways. Summing up, we get
\begin{equation}
\nonumber
\label{ext_number}
    10^{i} \ell^{t} (n-v+c)!
\end{equation}
ways. From Claim~\ref{subgraphs relations, dense case} we finally get
\begin{equation}
\label{ext result}
    |\mathcal{H} \cap \langle I \rangle| \leq 10^{i} \ell^{t} \left(n-\frac{i}{2}-\frac{t}{2} - \frac{c-c_S}{2}\right)! .
\end{equation}
\end{proof}

\paragraph{Completing the proof.}

Recall Definition~\ref{def:spiro_spread} and~\eqref{eq:M_j-A}. The following lemma, combined with Theorem~\ref{spiro-theorem}, completes the proof of the upper bound in Theorem~\ref{th:main} in the case $n^{1-o(1)}\leq s(n)=o(n)$.

\begin{lemma} There exists a constant $C>0$ such that
    $\mathcal{H}(n, k)$ is $(Cn^{-\frac{1}{3-\alpha}}; m, \frac{m}{n^{1/5}}, \frac{m}{n^{2/5}}, \ldots, \frac{m}{n}, 1)$-spread.
\end{lemma}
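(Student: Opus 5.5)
The plan is to verify Definition~\ref{def:spiro_spread} directly with $t=6$, $\ell_1=m$ and $\ell_{t'}=m n^{-(t'-1)/5}$ for $t'=2,\dots,6$, followed by the final level $1$. The constant $t=6$ only enters Theorem~\ref{spiro-theorem} through a constant factor.

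\textbf{Setup.} Fix $t'$ and a set $A\subseteq E(T')$ for some copy $T'\in\mathcal{H}$, with $\ell_{t'+1}\le |A|=:j\le \ell_{t'}$, and fix $i\ge \ell_{t'+1}$. By symmetry we may identify $A$ with a subgraph $J\subseteq T$. Every $S\in\mathcal{H}$ with $|A\cap S|\ge i$ contains some $I\subseteq J$ with $|I|=i$. Hence
\[
M_i(A)\le \sum_{c,\,c_S,\,t_0}\#\{I\subseteq J:\ |I|=i,\ c(I)=c,\ t(I)=t_0\}\cdot \max_I |\mathcal{H}\cap\langle I\rangle| .
\]
We bound the first factor by Lemma~\ref{lm:subgraphs_count} and the second by Lemma~\ref{lm:extensions-count-third-case}. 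For the latter we use $(n-x)!\le n!\,(e/n)^{x}\cdot e^{O(x^2/n)}$, which is valid since $x\le n/2+O(1)$ after splitting off the $o(n)$ range. We also use $|\mathcal{H}|=n!/|\mathrm{Aut}(T)|$ with $|\mathrm{Aut}(T)|\le 2n$.

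\textbf{The resulting estimate.} Combining these gives
\[
\frac{M_i(A)}{|\mathcal{H}|}\le \mathrm{poly}(n)\,(C'e)^{i}\,n^{-i/2}\,\Bigl(\tfrac{\ell^{2}e}{n}\Bigr)^{t_0/2}\binom{2j}{c}n^{-(c-c_S)/2}.
\]
Because $\ell=n^{o(1)}$, the $t_0$-factor is at most $1$. This is exactly why the parameter $t(I)$ was introduced: the factor $\ell^{t_0}$ from counting is cancelled by the extra $n^{-t_0/2}$ from the density bound in Claim~\ref{subgraphs relations, dense case}. The sum over $(c,c_S,t_0)$ costs at most $i^3$.

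The target is $q^i$ with $q=Cn^{-1/(2+s/n)}$. Compared with $n^{-i/2}$, this target leaves a slack of
\[
n^{\,i\left(\frac12-\frac1{2+s/n}\right)}=n^{(1+o(1))\, i s/(4n)} .
\]
Since $i\ge \ell_{t'+1}\ge j n^{-1/5}$ and $s\ge n^{1-o(1)}$, this slack is at least $\exp\bigl(j\, n^{-1/5-o(1)}\ln n\bigr)$. We must use it to absorb $\binom{2j}{c}n^{-(c-c_S)/2}$ and the polynomial factors. For the components avoiding $S$, this is easy: $\binom{2j}{c-c_S}n^{-(c-c_S)/2}\le (6e\sqrt n/(c-c_S))^{c-c_S}\le e^{O(\sqrt n)}$. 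This is negligible against the slack whenever $j\ge m n^{-4/5}$, i.e.\ for all levels before the last one.

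\textbf{Main obstacle.} The hard part is the factor $\binom{2j}{c_S}$ coming from components that meet $S$, since Lemma~\ref{lm:extensions-count-third-case} gives them no $n^{-1/2}$ gain. I would first refine the extension count: the root of such a component is its leftmost $S$-vertex, so its image must lie in $S(T')$. This yields an additional factor $(s/n)^{c_S}$ exactly as in Section~\ref{small_s}, replacing $\binom{2j}{c_S}$ by roughly $(2ejs/(n c_S))^{c_S}$. Separately, each such component with $t_I(v)=0$ contains an entire wheel with $\ell$ edges, so $c_S\le g\le i\cdot O(1/\ell)+t_0$. The $t_0$ part is then paid for by the leftover $(\ell^2/n)^{t_0/2}$, and the remaining $c_S=O(i/\ell)$ components are paid for by the $i s/(4n)$ slack.

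The small-$j$ regime, the last level $|A|\le m/n$ together with $i\ge1$, needs separate care. There the slack is weak, so I would instead argue via the crude bound $q\gg n^{-1/2}\ell$ that each edge of $I$ gains a factor $n^{-1/2}$ up to constants. I expect this bookkeeping to be the main difficulty: balancing $c_S$ against $t_0$, $i/\ell$ and the slack uniformly over all six levels.
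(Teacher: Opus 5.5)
Your outline matches the paper's: multiply the bounds of Lemma~\ref{lm:subgraphs_count} and Lemma~\ref{lm:extensions-count-third-case}, cancel $\ell^{2t}$ against $n^{-t/2}$, and spend the slack $n^{is/(2(2n+s))}$ on components meeting $S$. However, the final accounting is the whole content of this lemma, and you leave it open; the pieces you do state do not work.

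First, bounding $|\mathrm{Aut}(T)|\le 2n$ separately leaves a factor of order $n$. Nothing absorbs it when $i$ is bounded, and Definition~\ref{def:spiro_spread} allows bounded $i$ at the last two levels ($i\ge m/n$, resp.\ $i\ge 1$). The automorphism factor has to cancel between $|\mathcal{H}\cap\langle I\rangle|$ and $|\mathcal{H}|$, since both are permutation counts divided by $|\mathrm{Aut}(T)|$; alternatively use $|\mathrm{Aut}(T)|=O(1)$, as the paper does.

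Your fallback for small $j$ is false. Indeed $q\sqrt n/C=n^{s/(2(2n+s))}=n^{(1+o(1))/(8\ell)}$, which is bounded once $\ell\ge\ln n$, so $q\gg n^{-1/2}\ell$ fails on much of the range.

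Second, $e^{O(\sqrt n)}$ for the $S$-free components is not negligible for $j\ge mn^{-4/5}$. At level $5$ you can have $j\approx 3n^{1/5}$ and $i\approx 3$, where the slack is only $n^{O(s/n)}$. You discarded the dependence on $j$. Keeping it, and using $j\le in^{1/5}$, gives $\binom{2j}{c'}n^{-c'/2}\le e^{2j/\sqrt n}\le e^{2in^{-3/10}}$.

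The missing idea is that every component must pay exactly the level ratio $n^{-1/5}$. For the $S$-components with $t(I_j)=0$, the constant hidden in your ``$c_S=O(i/\ell)$'' is decisive.
\begin{itemize}
\item Such a component can cost a factor $\approx n^{1/5}$ through $\binom{2j}{c}$ when $j\approx in^{1/5}$. Your $(s/n)^{c_S}$ refinement is valid but saves only $e^{o(i)}$ here, because $s/n\approx 1/(2\ell)$.
\item Meanwhile the slack is only $\approx n^{i/(8\ell)}$. If each such component carried just the $\ell$ edges of $C(v)$, you would need $n^{i/(5\ell)}\le n^{i/(8\ell)}$, which is false.
\end{itemize}
The paper counts each such component as a full wheel with $2\ell$ edges. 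So there are at most $i/(2\ell)$ of them, and their cost $n^{i/(10\ell)}$ fits into $n^{si/(2(2n+s))}$ because $\frac{1}{5\ell}\le\frac{s}{2n+s}$ by \eqref{l-s connection}. If some spokes are missing, Claim~\ref{subgraphs relations, dense case} is loose by half their number, giving an extra $n^{-1/2}$ per missing spoke, which only helps.

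The other components are cheaper. Those with $t(I_j)>0$ pay $\ell^2n^{-1/2}\le n^{-1/5}$, and $S$-free ones pay $n^{-1/2}$. Everything then collapses to $e^{O(i)}\sum_c\bigl(j/(cn^{1/5})\bigr)^c n^{-i/(3-\alpha)}$. Finally, $(x/c)^c\le e^{x/e}$ with $x=j/n^{1/5}\le i$ handles all levels at once, with no separate small-$j$ regime.
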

\begin{proof}
    Fix an arbitrary subgraph $J\subset T$ with $j$ edges. Then Lemma~\ref{lm:subgraphs_count} and Lemma~\ref{lm:extensions-count-third-case} imply
    $$
    M_i (J)  \leq \sum_{c=1}^i \sum_{c_S=0}^c \sum_{t=0}^i (20480e)^i {2j \choose c}\ell^{2t} \left(n-\frac{i}{2} - \frac{t}{2} - \frac{c-c_S}{2}\right)! .
    $$
Since $T$ has a constant number of automorphisms $f$, we get that $|\mathcal{H}| = \frac{n!}{f}$. Therefore
\begin{equation}
\label{f_i specific}
    M_i (J) \leq e^{O(i)} \sum_{c=1}^i \sum_{c_S=0}^c \sum_{t=0}^i \left(\frac{j}{c\sqrt{n}}\right)^{c-c_S} \left(\frac{j}{c}\right)^{c_S} \left(\frac{\ell^2}{\sqrt{n}}\right)^t n^{-\frac{i}{2}} |\mathcal{H}|.
\end{equation}
Now we split the set of components of $I$ which contain vertices from $S$ into two groups: $0 \leq c_t \leq c_S$ of them have $t(I_j) > 0$ and all the other components have $t(I_j) = 0$. Since every component in the second group contains a complete wheel, we get $c_S-c_t \leq \frac{i}{2\ell}$. Moreover,~\eqref{l-s connection} implies $\frac{1}{5\ell} \leq \frac{s}{2n+s}$ for all $n$ large enough. Therefore,
$$
\left(n^\frac{1}{5}\right)^{c_S-c_t} \leq n^{\frac{i}{10\ell}}\leq n^{\frac{si}{2(2n+s)}}.
$$
We have that $\frac{1}{2}-\frac{1}{3-\alpha}=\frac{1-\alpha}{2(3-\alpha)}=\frac{s}{2(2n+s)}$.  Thus,
\begin{align*}
         \frac{M_i (J)}{|\mathcal{H}|} &\leq e^{O(i)} \sum_{c=1}^i \sum_{c_S=0}^c \sum_{c_t=0}^{c_S} \left(\frac{j}{c\sqrt{n}}\right)^{c-c_S} \left(\frac{j \ell^2}{c\sqrt{n}}\right)^{c_t} \left(\frac{j}{cn^{1/5}}\right)^{c_S-c_t}  n^{\frac{si}{2(2n+s)}} n^{-\frac{i}{2}}\\ & \leq  e^{O(i)} \sum_{c=1}^i \left(\frac{j}{c n^{1/5}}\right)^c n^{-\frac{i}{3-\alpha}},
\end{align*}
Now note that from $\left(\frac{x}{c}\right)^c \leq e^{\frac{x}{e}}$, it follows that if $\frac{j}{n^{1/5}} \leq i$ then $M_i(J) \leq \left(Cn^{-\frac{1}{3-\alpha}}\right)^i |\mathcal{H}|$ for some constant $C$. By definition, it means that $\mathcal{H}$ is $(Cn^{-\frac{1}{3-\alpha}}; m, \frac{m }{n^{1/5}}, \frac{m }{n^{2/5}}, \ldots, \frac{m }{n}, 1)$-spread, completing the proof of the lemma.
\end{proof}

%% file: conclusion.tex
\section{Further questions}
\label{sc:further}

It would be interesting to determine the sharp threshold, that is, the exact asymptotics of $p_c(\mathcal{T}_{n, k})$. Our current approach appears insufficient for this purpose, since the family $\mathcal{T}_{n,k}$ contains exponentially many non-isomorphic triangulations. Even in cases where a family consists of a single isomorphism class --- as for the square of a Hamilton cycle --- determining the exact constant factor is already a highly nontrivial problem. Another natural direction for future research is to determine threshold probabilities for triangulations of other surfaces, as well as for other spanning subgraphs embedded on surfaces.

A related problem, concerning the threshold for the appearance of a spanning triangulation of a fixed $n$-gon was studied in~\cite{Brett}. This question pertains to dense random graphs with constant $p$ and requires completely different techniques. It would be interesting to investigate the analogous question for boundaries of arbitrary lengths.

\section*{Acknowledgements}

The first author expresses gratitude to Fedor Petrov for the opportunity to present an analysis of articles on the topic of this paper at the seminar on Algebraic Combinatorics, and Annika Heckel for correcting inaccuracies in an earlier version of this paper and for helpful discussions. Also, the first author thanks Elizaveta Tsiplakova for helping draw pictures.

%% file: appendix.tex
\appendix
\section{Proof of Lemma~\ref{lm: q-spread property}}
\label{appendix}

First, note that the hypergraph $\mathcal{H}$ of all copies of $F$ is $(k:=|E(F)|)$-uniform and that $|\mathcal{H}| = \frac{n!}{|\mathrm{Aut}(F)|}$. Now, fix arbitrary subgraph $I$ of $K_n$. Clearly, we can consider only the case where $I$ is contained in some copy of $F$. Note that every copy of $F$ in $T$ is completely determined by the ordering of vertices, and exactly $|\mathrm{Aut(F)}|$ such permutations define the same copy of $F$.  So it is enough just to estimate the number of permutations that define a copy of $F$ containing $I$.

Suppose that $I$ consists of $c$ connected components $I_1, \ldots, I_c$. Let us fix root vertices for each component: $x_1, \ldots, x_c$. The total number of ways to do this is
\begin{equation}
\label{number_comps}
    \prod\limits_{i=1}^c v(I_i) \leq \prod\limits_{i=1}^c (|I_i| + 1) \leq  \prod\limits_{i=1}^c 2^{|I_i|} = 2^{|I|} .
\end{equation}

Each permutation induces an ordering $\pi$ of the chosen roots and $n-v(I)$ vertices not included in the fragment $I$. The total number of such induced permutations is $(c+n-v(I))!$. For each fixed $\pi$, we estimate the total number of ways to 'insert' all remaining vertices in a way which is consistent with the structure of components $I_j$. To do this, we consider labelling of vertices in each component $I_j$ starting from the root $x_j$ such that each vertex has at least one neighbour in its component with a smaller label. Then we insert vertices into the permutation following this labelling. Note that at each step there are at most $d$ choices, as $\Delta(I_j) \leq d$. So the total number of possible permutations with each fixed $\pi$ is at most
\begin{equation}
\label{comp_order}
    \prod_{i=1}^c d^{v(I_j) -1} \leq d^{|I|} .
\end{equation}
From (\ref{number_comps}),(\ref{comp_order}) we get that:
$$|\mathcal{H} \cap \langle I \rangle| \leq \frac{(2d)^{|I|}}{|\mathrm{Aut(F)}|} (n-v(I)+c)! .$$

Let us check the spreadness property. Summing the inequality (\ref{triang_eq}) over all $c$ components, we get that $v(I) \geq \frac{|I|}{q} + c$. Substituting it:
\begin{equation}
 \begin{gathered}
 \nonumber
|\mathcal{H} \cap \langle I \rangle| \leq \frac{(2d)^{|I|}}{|\mathrm{Aut(F)}|} \left(n-\frac{|I|}{q}\right)! \leq \frac{2(2d)^{|I|}}{|\mathrm{Aut(F)}|} e^{\frac{|I|}{q}} n^{- \frac{|I|}{q}} n! \leq \left(4d C_2^{\varepsilon}e^{\frac{1}{q}} n^{\frac{-1}{q}}\right)^{|I|} |\mathcal{H}|,
\end{gathered}
\end{equation}
which proves that $\mathcal{H}$ is $8d C_2^{\varepsilon}e^{1/q} n^{-1/q}$-spread.

Now it remains to check the superspreadness condition (\ref{superspread}). We suppose that $|I| \leq \delta n \leq \frac{\delta}{C_1} k$. Again, from~\eqref{triang_eq} we get that $v(I) \geq \frac{|I|}{q} + (1 + \varepsilon) \cdot c$. So
 \begin{equation}
 \begin{gathered}
 \nonumber
|\mathcal{H} \cap \langle I \rangle| \leq \frac{(2d)^{|I|}}{|\mathrm{Aut(F)}|} \left(n-\frac{|I|}{q}-c\varepsilon\right)! \leq \frac{4(2d)^{|I|}}{|\mathrm{Aut(F)}|} e^{c\varepsilon + \frac{|I|}{q}} n^{-c\varepsilon - \frac{|I|}{q}} n!.
\end{gathered}
\end{equation}
Using the condition that $ n \geq \frac{k}{C_2}$ and the trivial observation $c \leq |I|$ we get
$$
|\mathcal{H} \cap \langle I \rangle| \leq \left(8d C_2^{\varepsilon}e^{\varepsilon+\frac{1}{q}} n^{-\frac{1}{q}}\right)^{|I|} k^{-c\varepsilon} |\mathcal{H}|,
$$
which completes the proof.

\section{Proof of Claim~\ref{cl:2nd_case}}
\label{appendix_b}

We show that $T$ satisfies the condition~\eqref{triang_eq} with $q=3-\frac{k}{n}$, $\varepsilon=\frac{s+k}{2k+3s} > \frac{2}{5}$ since $s<k$, and $\delta=\frac{1}{2}$. Similar to the proof of Claim~\ref{cl:first_case}, it is enough to prove the condition~\eqref{triang_eq} for fragments with a single simple outer face $C \subset T$. Let $\ell$ be the length of the cycle $C$ and $v=v(I)$ be the total number of vertices in $I$. The number of edges in $I$ is at most the number of edges in a triangulation of the $\ell$-gon $C$ with the total number of vertices $v$: $|I| \leq 3v-3-\ell$. Therefore, the inequality
    \begin{equation}
    \label{eq:claim_iso2_v-ell}
    v \geq \frac{3v-3-\ell}{3-\frac{k}{n}} + 1 + \frac{s+k}{2k+3s},
    \end{equation}
    implies the first inequality in (\ref{triang_eq}) with the specified parameters. Since $n=k+s, v=\ell+t$, the latter inequality is equivalent to
    $$\frac{k}{s} \leq \frac{\ell - 1}{t}$$
    if $t > 0$; and it holds if $t=0$.
    Note that the latter inequality holds when $\ell \leq k-1$ due to the assumption \eqref{triang_ineq2}.  Therefore, we have proved~\eqref{triang_eq} except for the case when $v = \ell+t > k \geq \frac{1}{2}n$ and $\ell \geq k$. Assume this last case. Since $I$ is connected and has a cycle, $|I|\geq v\geq \frac{1}{2}n$, as in the condition in the second case of~\eqref{triang_eq}. Since $\ell \geq k$, in the same way as in~\eqref{eq:claim_iso2_v-ell}, it suffices to prove 
    $$v \geq \frac{3v-3-k}{3-\frac{k}{n}} + 1$$
    which is immediate since $v < n + 1$.